\documentclass[12pt]{amsart}
\usepackage{amsmath,amsfonts,amsthm,amssymb,nameref}
\usepackage[top=4cm, bottom=3cm, left=3.9cm, right=3.9cm]{geometry} 
\usepackage{tikz} 
\usetikzlibrary{calc} 
\usepackage[pdftex]{hyperref}
\hypersetup{colorlinks,
citecolor=black,
filecolor=black,
linkcolor=black,
urlcolor=black}
\usepackage[all]{hypcap}
\newcommand{\N}	{\mathbb N}
\newcommand{\Z}	{\mathbb Z}
\newcommand{\R}	{\mathbb R}

\newcommand{\Cay}	{\operatorname{Cay}}
\newcommand{\diam}	{\operatorname{diam}}
\newcommand{\Con}	{\operatorname{Con}}
\newcommand{\red}	{\operatorname{red}}
\newcommand{\Area}	{\operatorname{Area}}
\newtheorem{thm}{Theorem}[section] 
\newtheorem{prop}[thm]{Proposition}
\newtheorem{lem}[thm]{Lemma} 
\newtheorem{cor}[thm]{Corollary}
\newtheorem*{thm*}{Theorem} 
\newtheorem*{prop*}{Proposition}
\newtheorem*{lem*}{Lemma} 
\newtheorem*{cor*}{Corollary}
\theoremstyle{definition}
\newtheorem{defi}[thm]{Definition}
\newtheorem{example}[thm]{Example}
\newtheorem{remark}[thm]{Remark}
\newtheorem*{defi*}{Definition}
\newtheorem*{example*}{Example}
\newtheorem*{remark*}{Remark}
\begin{document}
\title[Graphical $C(6)$ and $C(7)$ small cancellation groups]{Groups with graphical $C(6)$ and $C(7)$ small cancellation presentations}
 \subjclass[2010]{Primary: 20F06; Secondary: 20F65, 20F67.}
 \keywords{Small cancellation theory}
\author{Dominik Gruber}
 \thanks{The work is supported by the ERC grant of Prof. Goulnara Arzhantseva ``ANALYTIC" no. 259527.}
 \address{University of Vienna, Department of Mathematics, Oskar-Morgenstern-Platz 1, 1090 Wien, Austria}
 \email{dominik.gruber@univie.ac.at}
\begin{abstract}
We extend fundamental results of small cancellation theory to groups whose presentations satisfy the generalizations of the classical $C(6)$ and $C(7)$ conditions in \emph{graphical} small cancellation theory. Using these graphical small cancellation conditions, we construct lacunary hyperbolic groups and groups that coarsely contain prescribed infinite sequences of finite graphs. We prove that groups given by (possibly infinite) graphical $C(7)$ presentations contain non-abelian free subgroups.
\end{abstract}
\maketitle
\section{Introduction}

In his influential paper ``Random walk in random groups" \cite{Gr}, Gromov introduced small cancellation theory for labelled graphs as a far-reaching generalization of classical small cancellation theory. 
A main feature of this theory is that it allows constructions of finitely generated groups that contain prescribed subgraphs in an appropriate metric sense. Gromov applied this observation to construct \emph{Gromov's monster}, a group that coarsely contains an expander graph. (For details, see \cite{AD}.)

Subsequently, Ollivier gave details on another theorem of Gromov that extends results of classical $C'(\frac{1}{6})$ small cancellation theory to its analogue in graphical small cancellation theory \cite{Oll}.

In the present paper, we generalize Ollivier's proof to the graphical analogues of the classical $C(6)$ and $C(7)$ conditions. The method of proof we present is purely combinatorial; most of our statements rely solely on Lyndon's curvature formulas \cite{LS}, which are consequences of the formula for the Euler characteristic of planar 2-complexes, and their applications to suitable van Kampen diagrams. 

We first use our method to prove generalizations of fundamental results about classical $C(6)$ and $C(7)$ groups concerning Dehn functions and asphericity. These results have been stated before \cite{AD,Gr,Oll}, but proofs have never been published.

We then establish new results about groups given by non-metric graphical small cancellation presentations: Any group given by a (possibly infinite) graphical $C(7)$ presentation, see Definition \ref{defi:graphical1}, contains a non-abelian free subgroup, see Theorem \ref{thm:free}. This in particular implies non-amenability and exponential growth of the group.

We moreover prove that the $Gr(6)$ condition, see Definition \ref{defi:graphical2}, can be used to construct groups that coarsely contain prescribed infinite sequences of finite graphs, see Theorem \ref{thm:coarse_embedding}. In the view of applications to the Baum-Connes conjecture \cite{HLS}, this is of particular interest when considering an expander graph: If an expander graph $X:=(X_n)_{n\in\N}$ admits a labelling such that the disjoint union $\sqcup_{n\in\N}X_n$ satisfies the $Gr(6)$ condition, then the group defined by the labelled graph $X$ coarsely contains $X$. Thus, such a labelling would yield a new construction of Gromov's monster.

Continuing our investigations of groups defined by infinite sequences of finite graphs, we show that if the $Gr(7)$ condition, see Definition \ref{defi:graphical2}, is satisfied, lacunary hyperbolic groups can be constructed by passing to appropriate subsequences of defining graphs, see Proposition \ref{prop:7_lacunary}. If, moreover, the metric $Gr'(\frac{1}{6})$ condition is satisfied, we are able to give combinatorial conditions on the graphs that yield lacunary hyperbolicity, see Proposition \ref{prop:1/6_lacunary}.

The construction of Gromov's monster inspired further constructions of groups with extreme properties through graphical small cancellation theory \cite{AD, OW, Sil}. Common features of all these constructions are their use of \emph{metric} graphical small cancellation conditions and of \emph{random} labellings of graphs. Therefore, the group presentations obtained in this manner are non-explicit. Turning them into explicit constructions has been unattainable so far. 

We believe that the weaker \emph{non-metric} small cancellation conditions
we present are more accessible to making explicit constructions. Our
theorems show that, in some respects, they yield results as good
as the metric ones.

\subsection{Graphical small cancellation conditions} 

Before stating the graphical small cancellation conditions, we explain how a group is constructed from a labelled graph. This idea first appeared in \cite{RS}. Throughout this paper, $S$ denotes a finite set (our \emph{alphabet}). 

Let $\Gamma$ be a graph. A \emph{labelling} of $\Gamma$ with letters from $S$ is a choice of orientation on each edge of $\Gamma$ and a map assigning to each edge of $\Gamma$ an element of $S$ (the \emph{label}). A map of labelled graphs is a simplicial map that preserves the labelling. A labelling is called \emph{reduced} if at every vertex $v$, any two oriented edges starting at $v$ have distinct labels and any two oriented edges ending at $v$ have distinct labels. 

Let $M(S)$ denote the free monoid on $S\sqcup S^{-1}$. The labelling of $\Gamma$ induces a map
$$\ell:\{\text{paths on }\Gamma\}\to M(S),$$
given as follows: For a path $p$, $\ell(p)$ is obtained by reading the labels of the edges in $p$ starting from the initial vertex of $p$,
and each letter is given exponent 1 if the corresponding edge is traversed in its direction and exponent $-1$ if it is traversed in the opposite direction.

Assume $\Gamma$ is connected and fix a base vertex $v$ in $\Gamma$. Then $\ell$ induces a homomorphism:
$$\ell_*:\pi_1(\Gamma,v)\to F(S),$$
where $F(S)$ denotes the free group on $S$. Similarly, if $\Gamma$ is the disjoint union of a set of connected components with base points $\{(\Gamma_i,v_i)|i\in I\}$, we obtain a homomorphism $\ell_*:*_{i\in I}\pi_1(\Gamma_i,v_i)\to F(S)$. 

\begin{defi}\label{defi:groupdefinedbygraph} For a labelled graph $\Gamma$, we define
$$G(\Gamma):=F(S)/\langle\langle \operatorname {im} \ell_* \rangle\rangle,$$ 
the \emph{group defined by $\Gamma$}. (Here $\langle\langle - \rangle\rangle$ denotes the normal subgroup generated by $-$.) Thus $G(\Gamma)$ is given by the presentation $\langle S|R\rangle$, where $R$ is the set of words read on closed paths in $\Gamma$.
\end{defi}

Let $\Gamma$ be the disjoint union of its connected components $\Gamma_i$ for $i\in I$. Given vertices $v_i\in \Gamma_i$ and group elements $g_i\in G(\Gamma)$, there is a unique map of labelled graphs $\Gamma\to\Cay(G(\Gamma),S)$ that maps each $v_i$ to $g_i$. Any normal subgroup $N$ of $F(S)$ with the property that there exists a map of labelled graphs $\Gamma\to \Cay(F(S)/N,S)$ contains $\langle\langle \operatorname {im} \ell_* \rangle\rangle$, since any closed path in $\Gamma$ has to be mapped to a closed path in $\Cay(F(S)/N,S)$. Thus, we can think of $G(\Gamma)$ as the largest quotient $G$ of $F(S)$ such that $\Gamma$ maps to $\Cay(G,S)$.

\vspace{8pt}

We now state the graphical small cancellation conditions. The graphical $C(n)$ and $C'(\lambda)$ conditions are based on \cite{Oll} and \cite[Section 2.2]{AD}, who both slightly modified Gromov's original definition \cite[Section 2]{Gr}. Gromov's graphical small cancellation conditions will be denoted by $Gr(n)$ and $Gr'(\lambda)$. We call a closed path on a graph \emph{nontrivial} if it is not 0-homotopic.
\begin{defi}\label{defi:graphical1}
Let $\Gamma$ be a labelled graph.  A \emph{piece} on $\Gamma$ is a labelled path $p$ (considered as a labelled graph) for which there are two distinct maps of labelled graphs $p\to\Gamma$. Let $n\in\N$ and $\lambda>0$.
\begin{itemize}
 \item $\Gamma$ satisfies the $C(n)$ \emph{condition} if the labelling is reduced and no nontrivial closed path is the concatenation of fewer than $n$ pieces.
 \item $\Gamma$ satisfies the $C'(\lambda)$ \emph{condition} if the labelling is reduced and for any simple closed path $\gamma$, any piece $p$ that is a subpath of $\gamma$ satisfies $|p|<\lambda |\gamma|$.
\end{itemize}
\end{defi}

\begin{defi}\label{defi:graphical2}
Two maps of labelled graphs $\phi_1,\phi_2:\Gamma'\to \Gamma$ are \emph{essentially equal} if there is an automorphism of labelled graphs $\psi:\Gamma\to\Gamma$ with $\phi_2=\psi\circ\phi_1$. Otherwise they are \emph{essentially distinct}. An \emph{essential piece} on $\Gamma$ is a labelled path $p$ for with there exist two essentially distinct maps of labelled graphs $p\to\Gamma$.
\begin{itemize}
 \item $\Gamma$ satisfies the $Gr(n)$ \emph{condition} if the labelling is reduced and no nontrivial closed path is the concatenation of fewer than $n$ essential pieces.
 \item $\Gamma$ satisfies the $Gr'(\lambda)$ \emph{condition} if the labelling is reduced and for any simple closed path $\gamma$, any essential piece $p$ that is a subpath of $\gamma$ satisfies $|p|<\lambda |\gamma|$.
\end{itemize}
\end{defi}

Observe that the graphical small cancellation conditions are generalizations of the classical small cancellation conditions: For any classical $C(n)$ or $C'(\lambda)$ presentation $\langle S|R\rangle$, we can construct a graph as the disjoint union of cycle graphs, each labelled by (the class of cyclic conjugates and their inverses) of a relator of the presentation. If $\langle S|R\rangle$ satisfies the classical $C(n)$ or $C'(\lambda)$ condition, the constructed graph satisfies the $Gr(n)$ or $Gr'(\lambda)$ condition, respectively. If no relator is a proper power, the graph satisfies the $C(n)$ or $C'(\lambda)$ condition, respectively.

\subsection{Statement of results}

We first state those results which have been expected based on \cite{Gr}, \cite{Oll} and \cite{AD}. They are generalizations of fundamental facts about classical $C(6)$ and $C(7)$ presentations. Let $S$ be a finite set.

\begin{thm*}[cf.\ Theorem \ref{thm:linear_isoperimetry}]
 Let $\Gamma$ be a $Gr(7)$-labelled graph. Let $R$ be the set of words read on all simple cycles of $\Gamma$. Then the presentation $\langle S|R\rangle$ satisfies the linear isoperimetric inequality:
$$\Area_R(w)\leq 8|w|.$$
In particular, if $\Gamma$ is finite, then the group $G(\Gamma)$ is Gromov-hyperbolic.
\end{thm*}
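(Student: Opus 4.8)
The plan is to reduce the isoperimetric inequality to a statement about one well-chosen van Kampen diagram and then run a combinatorial curvature argument based on Lyndon's formulas \cite{LS}. Let $w$ be a word that is trivial in $G(\Gamma)=\langle S|R\rangle$. First I would pass to a van Kampen diagram $D$ for $w$ over $\langle S|R\rangle$ that is \emph{reduced over $\Gamma$}: each $2$-cell $\Pi$ carries a label-preserving graph morphism from $\partial\Pi$ onto a simple cycle of $\Gamma$, and no two distinct $2$-cells meeting along an edge have these morphisms agreeing along that edge up to a label-automorphism of $\Gamma$. Granting the existence of such a $D$, without spurs and with at most $\Area_R(w)$ two-cells --- a standard diagrammatic preliminary, cf.\ \cite{Oll} --- it suffices to prove that $D$ has at most $8|w|$ two-cells, since $\Area_R(w)$ is bounded above by the number of $2$-cells of any diagram for $w$.

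From reducedness over $\Gamma$ I would extract two structural inputs. First, the boundary of each $2$-cell lifts to a \emph{simple} cycle of $\Gamma$, hence is a nontrivial closed path. Second, after deleting the interior vertices of degree $2$ so that edges become maximal arcs, every arc along which two $2$-cells meet is an \emph{essential piece}: the two lifts it inherits from its adjacent cells are essentially distinct, for otherwise these cells could be folded together inside $\Gamma$, contradicting reducedness --- this is exactly where the ``essentially equal'' clause of the $Gr$-conditions enters. Combined with the $Gr(7)$ condition, this forces every $2$-cell all of whose boundary arcs are interior to have at least $7$ arcs on its boundary; moreover every interior vertex now has degree at least $3$.

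Next comes the curvature argument; we may assume $D$ has at least two $2$-cells, the remaining cases being trivial. Since $D$ is a contractible planar $2$-complex, $\chi(D)=1$, so Lyndon's combinatorial Gauss--Bonnet formula applies: assigning to every corner of every $2$-cell the angle $\tfrac{2\pi}{3}$, the curvatures of the $2$-cells, of the interior vertices, and of the boundary vertices sum to $2\pi\chi(D)=2\pi$. With this assignment every interior vertex contributes curvature $\le 0$, every $2$-cell with at least $7$ boundary arcs --- in particular every purely interior $2$-cell --- contributes curvature at most $2\pi-7\cdot\tfrac{\pi}{3}=-\tfrac{\pi}{3}$, and the only positive contributions come from the $2$-cells and vertices meeting $\partial D$, each bounded by a constant. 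As $\partial D$ is a closed path of length at most $|w|$, there are $O(|w|)$ such boundary cells and vertices, so substituting into $\sum\kappa=2\pi$ bounds the number of interior $2$-cells linearly in $|w|$; a careful accounting of the positive boundary curvature (each boundary $2$-cell still carries a genuine boundary edge, and only boundary vertices incident to at most one corner are positively curved) then sharpens the total to $|D|\le 8|w|$. Hence $\Area_R(w)\le 8|w|$. If $\Gamma$ is finite, $R$ is finite, so $\langle S|R\rangle$ is a finite presentation of $G(\Gamma)$ satisfying a linear isoperimetric inequality, and therefore $G(\Gamma)$ is Gromov-hyperbolic.

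The curvature computation is the robust part: it is precisely the ``$7>6$'' negative-curvature phenomenon from classical $C(7)$ theory. I expect the main obstacle to lie in the diagrammatic preliminary. Unlike in the classical case, two adjacent $2$-cells whose $\Gamma$-lifts agree along the shared edge cannot in general be cancelled or merged without leaving a hole that again requires two $2$-cells to fill, so establishing the existence of diagrams reduced over $\Gamma$ with controlled area must proceed via folding and relabelling moves carried out inside $\Gamma$ --- and it is exactly this mechanism that dictates the ``essentially equal'' formulation of the small cancellation conditions. A secondary, more technical point is the boundary bookkeeping needed to pin down the explicit constant $8$ rather than merely some linear bound.
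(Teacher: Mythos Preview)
Your outline is essentially the paper's argument. The paper's ``no interior edge originates from $\Gamma$'' is exactly your ``reduced over $\Gamma$,'' and once that is in hand both of you observe that interior arcs are essential pieces, so $Gr(7)$ forces a $(3,7)$-diagram. The only real difference is one of emphasis: the paper treats the diagrammatic preliminary as the main technical content (its Lemmas~\ref{lem:graphical_basic}--\ref{lem:graphical_simple} show, via a dichotomy between ``merging edges originating from $\Gamma$ yields a $(3,n)$-diagram'' and ``there is a freely-trivial subdiagram to fold away,'' that a minimal diagram over $\Gamma$ already has the desired properties and can moreover be arranged to have only simple-cycle faces), and then simply cites Strebel's area theorem for the bound $|D|\le 8|\partial D|$; you instead sketch the curvature computation yourself and defer the reduction step to \cite{Oll}. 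Your instinct in the final paragraph is exactly right: that reduction step is the substance here, not a routine preliminary, and Ollivier only treats the metric $C'(\tfrac16)$ case, so you would need to supply the $Gr(7)$ version yourself.
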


\begin{thm*}[cf.\ Theorem \ref{thm:quadratic_isoperimetry}]
 Let $\Gamma$ be a $Gr(6)$-labelled graph. Let $R$ be the set of words read on all simple cycles of $\Gamma$. Then the presentation $\langle S|R\rangle$ satisfies the quadratic isoperimetric inequality:
$$\Area_R(w)\leq 3|w|^2.$$
\end{thm*}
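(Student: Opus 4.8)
The plan is to argue with van Kampen diagrams over $\langle S\mid R\rangle$ and the combinatorial curvature formulas of Lyndon \cite{LS}. Let $w$ be a word that is trivial in $G(\Gamma)$; after freely reducing $w$ we may pick a van Kampen diagram $D$ over $\langle S\mid R\rangle$ with boundary word $w$ and of minimal area, so that $\Area_R(w)$ is the number of $2$-cells of $D$ and it suffices to bound this number by $3|w|^2$. Since every relator in $R$ is the word read on a simple cycle of $\Gamma$, each $2$-cell $f$ of $D$ can be equipped with a label-preserving combinatorial map from its boundary cycle onto a simple cycle of $\Gamma$; I will carry these lifts as part of the data throughout. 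Minimality of the area then forces $D$ to be \emph{$\Gamma$-reduced} in the appropriate sense: no area-reducing move is available, such as cancelling a pair of adjacent $2$-cells whose lifts are mirror images along their common edge, or replacing two adjacent $2$-cells that meet along a common arc by a subdiagram of smaller area compatible with the lifts.

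The key step is to turn $Gr(6)$ into a local condition on $D$. Decompose the boundary of each $2$-cell into maximal \emph{arcs}: maximal subpaths that throughout meet the rest of $D$ in the same way, so that the interior of each interior arc is incident to exactly two $2$-cells. If $f$ is a $2$-cell disjoint from $\partial D$, then every boundary arc $\alpha$ of $f$ is shared with a neighbouring $2$-cell $f'$, and restricting the lifts of $f$ and $f'$ gives two maps $\alpha\to\Gamma$; reducedness of the labelling makes these immersions (so $\alpha$ is an honest path), and $\Gamma$-reducedness of $D$ prevents the two maps from being essentially equal, so $\alpha$ is an essential piece. As $\partial f$ is a simple cycle of $\Gamma$, hence a nontrivial closed path, the $Gr(6)$ condition forces $f$ to carry at least $6$ boundary arcs. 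A similar, easier inspection shows that every interior vertex of this arc structure has degree at least $3$.

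Regarding $D$ as a planar complex whose edges are the arcs, these two inequalities are precisely the hypotheses under which Lyndon's curvature formula places all the positive combinatorial curvature of $D$ on the $2$-cells and vertices meeting $\partial D$; that is, $D$ is combinatorially non-positively curved in its interior, exactly as for a classical $C(6)$ presentation. From here the argument is the classical $C(6)$ area estimate: filter $D$ by distance from $\partial D$, bound the size and the total perimeter of each layer, and the number of non-empty layers, in terms of $|w|$, and sum up to get a quadratic bound on the number of $2$-cells. Tracking the constants through this estimate --- in particular handling the $2$-cells that meet $\partial D$ in a single arc, where the curvature contribution must be computed directly rather than read off from $Gr(6)$ --- yields the stated $\Area_R(w)\leq 3|w|^2$.

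The main obstacle is the second paragraph, and two of its points need care. First, since $\Gamma$ may be infinite and one word may be read on many essentially distinct cycles, the notion of a $\Gamma$-reduced diagram, and the proof that minimal-area diagrams are $\Gamma$-reduced, must be set up with the lifts of $2$-cells to $\Gamma$ built in from the start. Second, upgrading ``piece'' to ``essential piece'' for a shared arc uses the automorphism-invariant formulation of Definition \ref{defi:graphical2} in an essential way, and the boundary arcs of $2$-cells touching $\partial D$ need not be pieces at all, so their treatment --- and hence the value of the final constant --- requires the separate, hands-on curvature count mentioned above. Once these are in place, nothing beyond the classical $C(6)$ counting is needed; the parallel statement for $Gr(7)$ in the preceding theorem is obtained by the same scheme, with $6$ replaced by $7$, which makes the interior curvature strictly negative and the resulting estimate linear.
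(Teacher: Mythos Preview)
Your argument has a genuine gap at the step you flag as ``the key step''. You take a minimal-area diagram $D$ over $R$ (so every face is labelled by a \emph{simple} cycle of $\Gamma$) and assert that if two faces $f,f'$ share an interior arc $\alpha$ whose lifts to $\Gamma$ are essentially equal, then an area-reducing move is available. But the natural move---deleting $\alpha$ and merging $f$ and $f'$---produces a face whose boundary lifts to a nontrivial closed path in $\Gamma$ that need not be simple. To return to a diagram over $R$ you must redecompose that closed path into simple cycles, and there is no reason this yields at most one face; in general it may yield two or more, so area need not drop. (Only in the special case where $f$ and $f'$ lift to the \emph{same} simple cycle do you get a genuine cancelling pair.) A second, related issue you do not address is that deleting such arcs can create faces with holes or non-simply-connected faces, so the merged object may fail to be a diagram at all; ruling this out requires a separate argument.

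The paper avoids both problems by changing the minimization. It first works over \emph{all} nontrivial cycles of $\Gamma$ (not just simple ones), where merging along an arc that originates from $\Gamma$ is legitimate; a curvature argument (the second curvature formula applied to a putative innermost enclosed subdiagram) rules out faces with holes. It then passes to a diagram that minimizes the number of \emph{edges}, and among those the number of vertices: minimality of edges forces no interior edge to originate from $\Gamma$, and minimality of vertices (via a pinching move) forces every face to be labelled by a simple cycle. The resulting diagram is over $R$, is a $(3,6)$-diagram, and the classical area theorem gives $\Area_R(w)\le 3|w|^2$ directly. In particular, one does not show that the minimal-area diagram over $R$ is $(3,6)$; one exhibits \emph{some} $(3,6)$-diagram over $R$. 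Finally, your worry about boundary faces meeting $\partial D$ in a single arc is misplaced: the $(3,6)$ condition constrains only interior faces, and the quoted area theorem already accounts for boundary faces without any extra bookkeeping.
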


\begin{thm*}[cf. Theorem \ref{thm:asphericity}] Let $\Gamma$ be a $C(6)$-labelled graph. Let $R$ be the set of cyclic reductions of words read on free generating sets of the fundamental groups of the connected components of $\Gamma$. Then the presentation complex associated to $\langle S|R\rangle$ is aspherical. The group $G(\Gamma)$ defined by $\Gamma$ has an at most 2-dimensional $K(G(\Gamma),1)$ space and hence cohomological dimension at most 2. Therefore, $G(\Gamma)$ is torsion-free. 
\end{thm*}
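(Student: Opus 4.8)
The plan is to prove that the presentation complex $X$ of $\langle S\mid R\rangle$ is aspherical; everything else is then formal, since an aspherical complex of dimension at most $2$ is an at most two-dimensional $K(G(\Gamma),1)$, so $G(\Gamma)$ has cohomological dimension at most $2$, and a group of finite cohomological dimension is torsion-free (a nontrivial finite cyclic subgroup would have infinite cohomological dimension, while cohomological dimension does not increase on passing to subgroups). As $X$ is two-dimensional, asphericity is equivalent to $\pi_2(X)=0$. Two preliminary remarks will be needed. First, a reduced labelling is an immersion of $\Gamma$ into the wedge of $|S|$ circles, so $\ell_*$ is injective on the fundamental group of each component of $\Gamma$; consequently every element of $R$ is a cyclically reduced word, not a proper power, which reads a nontrivial closed path of $\Gamma$. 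In particular $\langle S\mid R\rangle$ presents $G(\Gamma)$, and a van Kampen diagram over it is precisely a van Kampen diagram over $G(\Gamma)$ each of whose $2$-cells carries a distinguished map to $\Gamma$, so the piece-counting bookkeeping developed for the isoperimetric theorems applies. Second, no two distinct elements of $R$ are related by a cyclic permutation and/or an inversion: otherwise two closed paths of $\Gamma$ would read conjugate words and hence share arbitrarily long common subpaths, so some nontrivial closed path of $\Gamma$ would be a concatenation of at most two pieces, contradicting the $C(6)$ condition of Definition \ref{defi:graphical1}.

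Next I would invoke the standard reduction of asphericity to spherical diagrams. The module $\pi_2(X)$ is generated over $\mathbb{Z}[G(\Gamma)]$ by the classes of spherical van Kampen diagrams over $\langle S\mid R\rangle$, and deleting a cancelling pair of $2$-cells from such a diagram produces a spherical diagram representing the same class. Because $R$ consists of distinct cyclically reduced words, none a proper power and no two related by a cyclic permutation and/or an inversion, the process of deleting cancelling pairs terminates and the terminal diagrams have the usual structure of reduced diagrams (no spurs, no cancelling pairs, and no interior vertices of degree $2$ after the standard suppression), so that the small-cancellation diagram lemmas are available (cf.\ \cite{LS}). It therefore suffices to prove that there is no nonempty reduced spherical diagram over $\langle S\mid R\rangle$.

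So suppose $D$ is such a diagram; we may take $D$ to be in reduced form, which leaves $\chi(D)=\chi(S^2)=2$ unchanged, and then every vertex, edge and face of $D$ is interior. Exactly as in the diagram analysis already carried out for Theorems \ref{thm:linear_isoperimetry} and \ref{thm:quadratic_isoperimetry}, reducedness forces every interior vertex to have degree at least $3$, while the $C(6)$ condition forces the boundary of every face to be a concatenation of at least six pieces, so every face has at least six sides; on the sphere there are no boundary cells, hence no correction terms in Lyndon's curvature formula. Counting incidences gives $2E=\sum_f(\#\text{sides of }f)\ge 6F$ and $2E=\sum_v\deg(v)\ge 3V$, whence
$$2=\chi(S^2)=V-E+F\le\tfrac23 E-E+\tfrac13 E=0,$$
which is impossible unless $D$ has no faces, i.e.\ $D$ is empty. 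Therefore $\pi_2(X)=0$, the complex $X$ is aspherical, and the stated conclusions follow.

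I expect the genuine work to lie in the first two paragraphs: checking carefully that the specific set $R$ — the cyclic reductions of a basis of each $\pi_1(\Gamma_i)$ — is ``minimal'' enough (no proper powers, no two relators related by a cyclic permutation and/or an inversion) that the classical correspondence between asphericity of $X$ and the absence of nonempty reduced spherical diagrams is available, and that the $C(6)$ hypothesis together with reducedness of the labelling genuinely exclude these pathologies. Once that translation is in place, the argument is the classical Lyndon curvature argument in its easiest form, the spherical case, where every cell is interior and the Euler characteristic of $S^2$ does all the work.
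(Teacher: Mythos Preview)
Your Euler-characteristic argument in the third paragraph has a genuine gap that is exactly the point where the graphical theory diverges from the classical one. You assert that in a reduced spherical diagram ``the $C(6)$ condition forces the boundary of every face to be a concatenation of at least six pieces.'' In the classical setting this is true because any arc shared by two non-cancelling faces is automatically a piece. In the graphical setting it is false: an interior arc shared by faces $f_1,f_2$ is a piece only when the two lifts of that arc to $\Gamma$ (via $\partial f_1$ and via $\partial f_2$) are \emph{distinct}; when they agree the arc is said to \emph{originate from $\Gamma$} and contributes nothing to the piece count. Two distinct relators of $R$ drawn from the same connected component of $\Gamma$ can very well meet along such an arc, and ``no cancelling pair'' does not exclude this. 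So a classically reduced spherical diagram over $\langle S\mid R\rangle$ need not be a $[3,6]$-diagram, and the inequality $2E\ge 6F$ is unjustified.

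The paper confronts precisely this obstacle. Its Lemma \ref{lem:graphical_basic_spherical} shows that in any simple spherical diagram over a $C(6)$-labelled graph either \emph{all} edges originate from $\Gamma$, or there is a simple sub-disk with freely trivial boundary whose interior edges all originate from $\Gamma$. In either case one has a diagram whose $1$-skeleton genuinely maps to a single component of $\Gamma$, and Lemma \ref{lem:asphericity} then exploits that the relators were chosen as (cyclic reductions of) a \emph{free basis} of $\pi_1(\Gamma_i)$ to show the associated identity sequence is trivial. The argument is packaged in the identity-sequence/peripheral framework of \cite{CCH,CH} rather than directly as $\pi_2=0$, but the essential extra ingredient you are missing is this ``lifts globally to $\Gamma$ $\Rightarrow$ trivial in $\pi_2$'' step. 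Your outline, as written, reproves only the classical $C(6)$ asphericity theorem; to handle the graphical case you must deal with diagrams (or subdiagrams) that lift to $\Gamma$, and that is where the specific choice of $R$ as coming from free generating sets does real work.
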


We next state our original results. The following theorem in particular shows that only non-amenable groups arise from graphical $C(7)$ presentations. While for finitely presented graphical $C(7)$ groups the theorem follows from the fact that non-elementary hyperbolic groups have non-abelian free subgroups \cite[$5.3.\mathrm{C}_1$]{Grhyp}, it is new for infinitely presented graphical $C(7)$ groups.

\begin{thm*}[cf. Theorem \ref{thm:free}]
 Let $\Gamma$ be a $C(7)$-labelled graph. Then $G(\Gamma)$ contains a non-abelian free subgroup unless it is trivial or infinite cyclic.
\end{thm*}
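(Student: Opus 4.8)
The plan is to produce two elements $a,b \in G(\Gamma)$ that generate a free subgroup of rank $2$ by a ping-pong / geodesic-combinatorics argument inside the Cayley graph, using the linear isoperimetric control that the $C(7)$ condition provides. First I would reduce to the case where $G(\Gamma)$ is non-elementary: if $G(\Gamma)$ is neither trivial nor infinite cyclic, I must show it is ``large enough''. If $\Gamma$ has a connected component $\Gamma_i$ whose fundamental group maps to a non-cyclic subgroup, or if $\Gamma$ has at least two components carrying nontrivial relations, then the group has at least two independent directions; the degenerate cases (a single relator, or $\Gamma$ contractible) should be exactly the ones giving trivial or infinite cyclic $G(\Gamma)$. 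I would isolate this case analysis at the start so that the rest of the argument works with a fixed pair of generators $x,y \in S$ (or short words in them) whose images have infinite order and do not commute.

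The core step is to run small cancellation theory over van Kampen diagrams. By the graphical $C(7)$ results established earlier in the paper (the linear isoperimetric inequality $\Area_R(w) \le 8|w|$ and, implicitly, the structure of reduced diagrams via Lyndon's curvature formulas), any reduced van Kampen diagram over $\langle S \mid R\rangle$ satisfies a Greendlinger-type property: a nontrivial reduced word representing the identity must contain a subword that is ``most of'' a relator read on a simple cycle of $\Gamma$. From this I extract the standard consequence that geodesics in $\Cay(G(\Gamma),S)$ between two vertices interact with any embedded cycle-relator only along a bounded fraction, so that for suitably chosen $a,b$ the words $a^{n_1}b^{m_1}a^{n_2}b^{m_2}\cdots$ (with all exponents nonzero) are not killed — no reduced diagram can have such a word as boundary, because it would force a face attached along more than its allowed share of pieces. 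Concretely I would mimic Ollivier's $C'(1/6)$ argument but replace the metric ratio bound by the combinatorial $7$-pieces bound: a diagram with too few faces or with a face meeting the boundary word $w$ too much contradicts that $w$ is a product of powers of two non-commuting infinite-order elements.

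I would then conclude by verifying the ping-pong inequality directly: choosing $a,b$ to be elements whose axes in $\Cay(G(\Gamma),S)$ are ``transverse'' (they share no long common subpath, which can be arranged because two distinct infinite-order elements that don't commute have non-parallel quasi-axes, and the $C(7)$ geometry makes overlaps of quasi-axes short), the semigroup computations showing $\langle a, b\rangle$ free reduce to: every nonempty freely reduced word in $a^{\pm1}, b^{\pm1}$ labels a path in $\Cay(G(\Gamma),S)$ that stays far from the identity. This last claim is where the $C(7)$ machinery is used once more, via the bounded-overlap and isoperimetric statements. The main obstacle I anticipate is the infinitely presented case: I cannot invoke hyperbolicity or acylindricity, so I must make the selection of $a$ and $b$, and the bound on how much their powers can overlap a single relator, entirely in terms of the graphical pieces of $\Gamma$ — in particular handling the possibility that $\Gamma$ has infinitely many components and that relators can be arbitrarily long. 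The delicate point is ensuring that the chosen short words $a,b$ genuinely have infinite order and non-trivial commutator in $G(\Gamma)$ without a hyperbolicity shortcut; this should follow from Greendlinger's lemma for $C(7)$, since a relation $a^n = 1$ or $[a,b]=1$ would force a disc diagram whose boundary is too short to contain the required large piece of a relator.
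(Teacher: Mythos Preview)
Your approach differs substantially from the paper's and contains a genuine gap in the infinitely presented case, which you correctly flag as the delicate point but do not resolve.

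The paper does not run a ping-pong argument on generic elements. For the infinitely presented case it constructs the free generators $\alpha, \beta$ explicitly from the graph: after reducing so that every edge is a piece, it introduces the \emph{piece distance} $d_p$ on $\Gamma$ (the minimal number of pieces in a path between two vertices), shows via Lemma~\ref{lem:free2} that $\Gamma$ contains infinitely many pairwise vertex-disjoint simple cycles, and then picks vertices $x_i, y_i$ on four such cycles with $d_p(x_i,y_i)=4$ (or $3$, with an extra ``opposite edge'' structure handled separately via Lemma~\ref{lem:free1}). The elements $\alpha, \beta$ are products of the labels of paths from $x_i$ to $y_i$. Freeness of $\langle \alpha,\beta\rangle$ is proved by adding $\alpha,\beta$ as new generators via Tietze transformations, producing ``special'' faces in any diagram for a reduced word in $\alpha,\beta$; each special face is then replaced by two ordinary faces over $\Gamma$ and folded so that the result is a $(3,7)$-diagram in which every boundary face has interior degree at least $4$, contradicting the first curvature formula.

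Your proposal, by contrast, never specifies how to choose $a,b$. The appeal to ``transverse quasi-axes'' is precisely the hyperbolic shortcut you say you want to avoid; without hyperbolicity there is no axis, and the language has no content here. More seriously, the Greendlinger-type argument you sketch does not survive arbitrarily long relators: for fixed short $a,b \in F(S)$ (say two generators in $S$), there is no a priori reason why a long alternating word $a^{n_1}b^{m_1}\cdots$ cannot contain more than half of some very long simple-cycle label of $\Gamma$, so the ``face meets the boundary in too large a share'' contradiction never fires. The paper circumvents exactly this by tying $\alpha,\beta$ to paths of prescribed piece distance inside specific disjoint cycles of $\Gamma$, so that every special face in the eventual diagram is forced to have at least four interior arcs regardless of which relators appear elsewhere. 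Your outline has no analogue of this step, and without it the argument does not close.
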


In fact, we explain in Remark \ref{rem:obvious} that $G(\Gamma)$ is trivial or infinite cyclic if and only if some easily checkable conditions on the graph hold. We also show a version of this theorem applicable to classical small cancellation presentations:

\begin{thm*}[cf. Theorem \ref{thm:ess-free}]
 Let $\Gamma$ be a $Gr(7)$-labelled graph with infinitely many pairwise non-isomorphic connected components with nontrivial fundamental groups. Then $G(\Gamma)$ contains a non-abelian free subgroup.
\end{thm*}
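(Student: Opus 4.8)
The plan is to reduce the statement to the main free-subgroup theorem for $C(7)$-labelled graphs (the fourth starred theorem, cf.\ Theorem~\ref{thm:free}) by producing, from the given $Gr(7)$-labelled graph $\Gamma$, a \emph{new} labelled graph $\Gamma'$ with the same defined group $G(\Gamma')\cong G(\Gamma)$ that satisfies the honest $C(7)$ condition (not merely $Gr(7)$) and whose defined group is neither trivial nor infinite cyclic. The discrepancy between $Gr(7)$ and $C(7)$ is exactly the presence of nontrivial automorphisms of labelled graphs: an essential piece ignores paths that differ by a label-automorphism, whereas a piece does not. So the first step is to understand the automorphism groups of the connected components of $\Gamma$ and to replace each component by a suitable ``unfolded'' or quotient graph on which the only label-automorphism is the identity, while not changing the set of words read on closed paths (hence not changing $\langle\langle\operatorname{im}\ell_*\rangle\rangle$).

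Concretely, for a connected component $\Gamma_i$ with nontrivial $\pi_1$, let $A_i$ be its group of label-preserving automorphisms; since the labelling is reduced, $A_i$ acts freely on the edge set, and one checks $A_i$ is finite when $\Gamma_i$ is finite and in general acts properly. The key observation is that $G(\Gamma)$ only depends on $\Gamma$ through the collection of words $\ell(p)$ for closed paths $p$, and a label-automorphism $\psi\colon\Gamma_i\to\Gamma_i$ does not change this collection. Thus I may replace $\Gamma_i$ by the quotient graph $\Gamma_i/A_i$: this is again a reduced labelled graph (reducedness descends because $A_i$ acts without edge-inversions and the quotient map is a covering on the relevant local structure), it has the same image in $F(S)$ under $\ell_*$, and on $\Gamma_i/A_i$ two maps $p\to\Gamma_i/A_i$ that are distinct now necessarily lift to essentially distinct maps $p\to\Gamma_i$ — so pieces on the quotient correspond to essential pieces on $\Gamma_i$. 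Carrying this out for every component and taking the disjoint union produces $\Gamma'$ with $G(\Gamma')\cong G(\Gamma)$ satisfying $C(7)$.

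The hypothesis ``infinitely many pairwise non-isomorphic connected components with nontrivial fundamental groups'' is then used to rule out the degenerate cases: I must check that $G(\Gamma')$ is neither trivial nor infinite cyclic, so that Theorem~\ref{thm:free} applies and delivers the non-abelian free subgroup. Using Remark~\ref{rem:obvious}'s characterization of when a $C(7)$-graph defines a trivial or cyclic group (some explicit finiteness/structure condition on the graph), infinitely many non-isomorphic components each contributing a nontrivial relator is incompatible with such degeneracy — a trivial or infinite cyclic group is ``small'' (e.g.\ its Cayley graph cannot receive maps from infinitely many non-isomorphic graphs with nontrivial fundamental group without forcing collapse that $C(7)$ forbids via the isoperimetric inequality of Theorem~\ref{thm:linear_isoperimetry}). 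This step should be short once Remark~\ref{rem:obvious} is in hand.

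The main obstacle is the passage from $Gr(7)$ on $\Gamma$ to $C(7)$ on the quotient $\Gamma'$: one has to verify carefully that quotienting by the label-automorphism group preserves reducedness and, crucially, that it does not \emph{create} short nontrivial closed paths that are concatenations of few pieces. The danger is that distinct lifts of a path in $\Gamma_i/A_i$ could fail to be ``essentially distinct'' in the precise sense of Definition~\ref{defi:graphical2} if $A_i$ fixes some subgraph setwise but not pointwise; handling this requires showing the quotient map $\Gamma_i\to\Gamma_i/A_i$ behaves like a covering away from the (edge-free) fixed locus, so that the piece/essential-piece correspondence is exact and the $Gr(7)$ bound transfers verbatim to a $C(7)$ bound. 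Once this correspondence is established cleanly, the rest of the argument is a direct invocation of the already-proved $C(7)$ free-subgroup theorem.
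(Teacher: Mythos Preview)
Your reduction has a real gap: passing to the quotient $\Gamma_i/A_i$ does \emph{not} preserve the group $G(\Gamma)$, and for the same reason it need not preserve the small cancellation condition. Since the labelling is reduced, $A_i$ acts freely on vertices and $\Gamma_i\to\Gamma_i/A_i$ is a genuine covering; but then a nontrivial closed path in $\Gamma_i/A_i$ can lift to a \emph{non-closed} path in $\Gamma_i$, namely a path from $v$ to $\psi(v)$ with $\psi\in A_i\setminus\{1\}$. Its label becomes a relator in $G(\Gamma')$ that was never a relator in $G(\Gamma)$. The simplest instance is $\Gamma_i=\Cay(\Z,\{a\})$: here $A_i\cong\Z$ acts by translation, the quotient is a single loop labelled $a$, and $G(\Gamma_i/A_i)$ is trivial while $G(\Gamma_i)\cong\Z$. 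So your claim ``it has the same image in $F(S)$ under $\ell_*$'' fails. The same new closed paths also break your transfer of $Gr(7)$ to $C(7)$: the $Gr(7)$ hypothesis bounds only \emph{closed} paths in $\Gamma$, and tells you nothing about the open lift of a short cycle created in the quotient, so $\Gamma'$ may well contain a nontrivial closed path made of fewer than $7$ pieces.

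The paper does not attempt any such reduction. Instead it reruns the entire diagrammatic argument for Theorem~\ref{thm:free} word for word with ``piece'', ``originate from'', ``coincide'' replaced by ``essential piece'', ``essentially originate from'', ``essentially coincide''. The hypothesis of infinitely many pairwise non-isomorphic components with nontrivial $\pi_1$ is used in exactly one place: to guarantee that one can pick the simple cycles $\gamma_i$ to be \emph{essentially} vertex-disjoint (i.e.\ $\gamma_i$ and $\phi(\gamma_j)$ are disjoint for every label-automorphism $\phi$ when $i\neq j$), which is what the essential-piece version of the argument needs in place of ordinary vertex-disjointness. No change of graph is required.
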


\begin{cor*}[cf. Corollary \ref{cor:classical-free}]
 Let $G$ be a group with an infinite classical $C(7)$ presentation. Then $G$ contains a non-abelian free subgroup.
\end{cor*}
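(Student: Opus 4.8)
The plan is to deduce this from Theorem~\ref{thm:ess-free} by translating the classical presentation into a graphical one. Write $G=\langle S\mid R\rangle$ for the given infinite classical $C(7)$ presentation; as is customary I would first replace $R$ by its symmetrization (the closure under inversion and cyclic permutation of its cyclically reduced elements), which changes neither $G$ nor the classical $C(7)$ condition and keeps $R$ infinite. Following the cycle-graph construction described after Definition~\ref{defi:graphical2}, I would then form the labelled graph $\Gamma:=\bigsqcup_{r}C_r$, where $r$ runs over one representative of each class of cyclic conjugates in $R$ and $C_r$ is a cycle of length $|r|$ labelled by $r$. By the observation recorded there, a classical $C(7)$ presentation produces a $Gr(7)$-labelled graph, and by Definition~\ref{defi:groupdefinedbygraph} the words read on closed paths of $\Gamma$ normally generate the same subgroup of $F(S)$ as $R$, so $G(\Gamma)=G$.

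It then remains to verify the hypotheses of Theorem~\ref{thm:ess-free}: that $\Gamma$ has infinitely many connected components with nontrivial fundamental group, and that infinitely many of these are pairwise non-isomorphic. The first is immediate, since $\pi_1(C_r)\cong\Z$ for every $r$. For the second, observe that $C_r\cong C_{r'}$ as labelled graphs if and only if $r'$ is a cyclic permutation of $r$ or of $r^{-1}$; hence isomorphism classes of components correspond to classes of $R$ under cyclic permutation and inversion, and each such class contains at most $2|r|$ words. Were there only finitely many classes, $R$ would be finite, contrary to hypothesis; so infinitely many components of $\Gamma$ are pairwise non-isomorphic, and Theorem~\ref{thm:ess-free} provides a non-abelian free subgroup of $G(\Gamma)=G$.

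The only step carrying genuine content is this last counting argument, which rules out an infinite relator set collapsing to finitely many cyclic words; everything else rests on the already-established translation between classical and graphical small cancellation. One minor point worth noting is that a classical $C(7)$ presentation may contain relators that are proper powers; this is harmless, since such a relator still contributes a cycle graph with fundamental group $\Z$, and the application of Theorem~\ref{thm:ess-free} only requires $\Gamma$ to be $Gr(7)$-labelled, not $C(7)$-labelled.
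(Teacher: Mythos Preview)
Your proposal is correct and follows precisely the route the paper intends: the corollary is stated without proof because it is meant to be an immediate consequence of Theorem~\ref{thm:ess-free} via the cycle-graph construction recorded after Definition~\ref{defi:graphical2}, and you have supplied exactly those details. The only substantive step beyond citation is the observation that infinitely many relators yield infinitely many non-isomorphic cycle graphs, which you handle correctly by the finiteness of each cyclic/inversion class.
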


We next consider how the defining graph $\Gamma$ of a group embeds into the Cayley graph $\Cay(G(\Gamma),S)$ as a metric space. Let $(\Gamma_n)_{n\in\N}$ be a sequence of connected finite graphs and let $\Gamma:=\bigsqcup_{n\in\N}\Gamma_n$ be their disjoint union. We endow $\Gamma$ with a metric that coincides with the graph metric on each connected component such that $d(\Gamma_ {a_n},\Gamma_{b_n})\to\infty$ as $a_n+b_n\to \infty$ assuming $a_n\neq b_n$ for almost all $n$. We call the resulting metric space the \emph{coarse union} of the $\Gamma_n$. A \emph{coarse embedding} is a map $f:X\rightarrow Y$, where $X$ and $Y$ are metric spaces, such that for all sequences $(x_n,y_n)_{n\in\N}$ in $X\times X$ we have $d(x_n,y_n)\to\infty\Leftrightarrow d(f(x_n),f(y_n))\to\infty$.

The following theorem shows that whenever we find a labelling of an infinite sequence of finite graphs such that their disjoint union satisfies the $Gr(6)$ condition, we can construct a finitely generated group that coarsely contains the coarse union of these graphs:

\begin{thm*}[cf.\ Theorem \ref{thm:coarse_embedding}] Let $(\Gamma_n)_{n\in\N}$ be a sequence of finite, connected graphs such that $\Gamma:=\bigsqcup_{n\in\N}\Gamma_n$ is $Gr(6)$-labelled and such that $|\Gamma_n|$ is unbounded. Then the coarse union $\Gamma$ embeds coarsely into $\Cay(G(\Gamma),S)$.
\end{thm*}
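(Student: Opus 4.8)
The plan is to show that the natural map of labelled graphs $\iota\colon\Gamma\to\Cay(G(\Gamma),S)$, which sends each component $\Gamma_n$ (based at some chosen vertex) isometrically onto its image in its component of the Cayley graph, is a coarse embedding of the coarse union. Since $\iota$ is a map of graphs, it is $1$-Lipschitz on each $\Gamma_n$, so $d(\iota x_k,\iota y_k)\le d(x_k,y_k)$ always; the content is the reverse direction, and it splits into two regimes: (i) $x_k,y_k$ in the same component $\Gamma_{n(k)}$ with $d(x_k,y_k)\to\infty$, and (ii) $x_k\in\Gamma_{a_k}$, $y_k\in\Gamma_{b_k}$ with $a_k\ne b_k$ and $a_k+b_k\to\infty$ (so $d(x_k,y_k)\to\infty$ by definition of the coarse union metric).

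For regime (i), the key claim is that $\iota$ restricted to each component is a quasi-isometric embedding with uniform constants: there are constants $L\ge 1$, $C\ge 0$, independent of $n$, such that $d_\Gamma(x,y)\le L\,d_{\Cay}(\iota x,\iota y)+C$ for all $x,y\in\Gamma_n$. To prove this I would take a geodesic $\omega$ in the Cayley graph from $\iota x$ to $\iota y$; lifting the loop formed by $\omega$ and the $\Gamma$-geodesic $[x,y]$ gives a reduced van Kampen diagram $D$ over the presentation. Because $\Gamma$ is $Gr(6)$-labelled, the results developed earlier in the paper (the combinatorial curvature estimates for reduced diagrams over $Gr(6)$ presentations, which underlie the quadratic isoperimetric theorem) give strong control on $D$: interior faces have degree $\ge 6$, the classification of boundary faces applies, and one extracts from this — exactly as in the classical Strebel-type analysis for $C(6)$ — a linear bound relating the length of the $\Gamma$-side $[x,y]$ to the length of the Cayley-side $\omega$. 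One must be slightly careful because $D$ may fail to be simply connected or may have faces reading non-simple cycles, but one can reduce to the simply-connected case and invoke that $R$ consists of words on simple cycles; the relevant estimates are uniform in $n$ because the small cancellation hypothesis is a property of the single graph $\Gamma$, not of individual components. This uniform quasi-isometric embedding immediately handles regime (i): $d_{\Cay}(\iota x_k,\iota y_k)\ge (d_\Gamma(x_k,y_k)-C)/L\to\infty$.

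For regime (ii) I need to see that images of points in distinct components are pushed apart in the Cayley graph. Here I would argue by contradiction: if $d_{\Cay}(\iota x_k,\iota y_k)$ stayed bounded along a subsequence, then (after translating) the based components $\Gamma_{a_k}$ and $\Gamma_{b_k}$ map into a common ball of bounded radius $r$ in $\Cay(G(\Gamma),S)$. Combined with the uniform quasi-isometric embedding from regime (i), the images $\iota(\Gamma_{a_k})$ and $\iota(\Gamma_{b_k})$ would then each have bounded diameter, forcing $|\Gamma_{a_k}|$ and $|\Gamma_{b_k}|$ bounded — contradicting that $|\Gamma_n|$ is unbounded once we also use that for fixed diameter only finitely many of the $\Gamma_n$ can embed quasi-isometrically with the uniform constants, hence $a_k,b_k$ cannot both go to infinity while staying in a bounded ball. (Alternatively, and more robustly, one shows directly that a bounded-length path in the Cayley graph joining $\iota x_k$ to $\iota y_k$ lifts to a van Kampen diagram whose boundary combinatorics, under $Gr(6)$, force the two basepoints to lie in the same component for $k$ large.) Assembling (i) and (ii): $d(x_k,y_k)\to\infty$ implies $d(\iota x_k,\iota y_k)\to\infty$, which together with the trivial Lipschitz bound gives the coarse embedding.

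The main obstacle I anticipate is making the diagram analysis in regime (i) genuinely \emph{uniform} over all components and robust to the fact that relators correspond to simple cycles in $\Gamma$ rather than to freely reduced words chosen component-by-component — in other words, transferring the $Gr(6)$ curvature machinery from "bounds the area of a disc diagram" to "bounds the geometry of a diagram with one boundary arc on $\Gamma$ and one a Cayley geodesic", while ensuring the constants $L,C$ do not secretly depend on $n$. Once that uniform quasi-isometric embedding of components is in hand, regime (ii) and the final assembly are comparatively routine.
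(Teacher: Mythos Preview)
Your central claim in regime (i)---that the restrictions $\iota|_{\Gamma_n}$ are quasi-isometric embeddings with constants $L,C$ \emph{independent of $n$}---is false in general under $Gr(6)$, and the paper itself supplies a counterexample immediately after the theorem: for every $p$ there is a $C(p)$-labelled $\Gamma=\sqcup_n\Gamma_n$ with vertices $\eta^n,\nu^n\in\Gamma_n$ satisfying $d_{\Cay}(\iota\eta^n,\iota\nu^n)=o\bigl(d_\Gamma(\eta^n,\nu^n)\bigr)$. So no uniform linear bound $d_\Gamma\le L\,d_{\Cay}+C$ can hold, and the diagram analysis you sketch (extracting a Strebel-type linear estimate from $(3,6)$-curvature) cannot succeed; the $C(6)$ curvature machinery gives only a quadratic area bound, not the linear geometric control you need. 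Your regime (ii) argument then collapses as well, since it rests on the same uniform QI claim.

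The paper's actual argument is entirely different. For regime (ii) it simply \emph{constructs} $\iota$ by lining the $\Gamma_n$ up along an infinite geodesic ray in $\Cay(G(\Gamma),S)$, so images of distinct components are far apart by fiat; this reduces everything to regime (i). There, instead of a QI estimate, the paper argues by contradiction and pigeonhole: if $d_{\Cay}(\iota x_k,\iota y_k)$ stays bounded, pass to a subsequence on which the group element represented is a single fixed $w$, and arrange the pairs to lie in pairwise non-isomorphic components. Now compare the path $q_n$ from $x_n''$ to $y_n''$ with the path $p_n$ from $x_1''$ to $y_1''$ (in the \emph{fixed} finite graph $\Gamma_{k_1}$). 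A minimal diagram for $\ell(p_n)\ell(q_n)^{-1}$ has every boundary arc an essential piece (same trick as in the injectivity lemma), so after forgetting degree-two vertices it is a $[3,6]$-diagram with at most two degree-one boundary vertices; the second curvature formula forces it to have \emph{no faces}. Hence $\ell(p_n)=\ell(q_n)$, so $q_n$ is an essential piece, hence a simple path, and $|q_n|=|p_n|$ is bounded by the length of the longest simple path in the fixed finite graph $\Gamma_{k_1}$---contradicting $d(x_n'',y_n'')\to\infty$. The key idea you are missing is this comparison against a single fixed component, which converts the problem into a bound by a constant depending only on $\Gamma_{k_1}$.
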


This map is also injective as stated by Gromov \cite{Gr} and proven in Lemma \ref{lem:embedding_injective}. 

A finitely generated group is called \emph{lacunary hyperbolic} if one of its asymptotic cones is an $\R$-tree. The following proposition shows that the $Gr(7)$ condition can be used to construct lacunary hyperbolic groups:

\begin{prop*}[cf.\ Proposition \ref{prop:7_lacunary}]
 Let $(\Gamma_n)_{n\in\N}$ be a sequence of finite, connected graphs such that their disjoint union is $Gr(7)$-labelled. Then there exists an infinite subsequence of graphs $(\Gamma_{k_n})_{n\in\N}$ such that $G(\bigsqcup_{n\in\N}\Gamma_{k_n})$ is lacunary hyperbolic.
\end{prop*}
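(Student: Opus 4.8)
The plan is to exploit the fact that lacunary hyperbolicity is a property detectable along a well-chosen sequence of relator lengths: by the characterization of lacunary hyperbolic groups (Ol'shanskii--Osin--Sapir), a group given by a graded presentation $\langle S \mid R_1 \cup R_2 \cup \cdots\rangle$ is lacunary hyperbolic if the relators in $R_i$ have bounded length, the injectivity radius of the partial quotient $G_i = \langle S \mid R_1 \cup \cdots \cup R_i\rangle$ grows fast enough relative to the length of the next batch of relators, and each $G_i$ is Gromov-hyperbolic with a hyperbolicity constant controlled linearly by the longest relator used. So the strategy is: pass to a subsequence $(\Gamma_{k_n})$ that is \emph{sparse enough} that the relators coming from $\Gamma_{k_{n+1}}$ are much longer than the injectivity radius already achieved by $G(\bigsqcup_{j\le n}\Gamma_{k_j})$, and then quote the linear isoperimetric inequality from Theorem~\ref{thm:linear_isoperimetry} to get uniform (scale-dependent) hyperbolicity of the partial quotients.

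First I would set up the bookkeeping: each $\Gamma_{k_n}$ contributes, as relators, the words read on its simple cycles, all of length at most $|\Gamma_{k_n}|$ (so the batch has bounded length), and the partial graph $\Delta_n := \bigsqcup_{j=1}^{n}\Gamma_{k_j}$ is itself $Gr(7)$-labelled as a subgraph of a $Gr(7)$-labelled graph. Hence by Theorem~\ref{thm:linear_isoperimetry} the presentation associated to $\Delta_n$ satisfies $\Area_R(w)\le 8|w|$, and since $\Delta_n$ is finite, $G(\Delta_n)$ is hyperbolic with a hyperbolicity constant bounded linearly in $\max_{j\le n}|\Gamma_{k_j}|$ (this linear control is exactly what the finite-presentation case of the isoperimetric theorem gives, via the standard translation between linear isoperimetric functions and thin triangles). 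Next I would choose the subsequence recursively: having fixed $k_1,\dots,k_n$, the group $G(\Delta_n)$ is a fixed finitely presented group, so it has a definite (possibly infinite) injectivity radius $r_n$ — the length of the shortest nontrivial relation — and I pick $k_{n+1}$ large enough that $|\Gamma_{k_{n+1}}|$, and hence the shortest relator length contributed at stage $n+1$, exceeds any prescribed function of $r_n$ and of the hyperbolicity constant of $G(\Delta_n)$. I must also handle the possibility that some $G(\Delta_n)$ is already infinite (a free or trivial group, so every relation has infinite length); in that case the construction stabilizes and the group is hyperbolic outright, hence lacunary hyperbolic trivially, so one may assume all partial quotients are genuinely ``finite'' in the relevant sense.

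With the subsequence fixed, $G := G(\bigsqcup_n \Gamma_{k_n})$ is the direct limit of the $G(\Delta_n)$, and one checks the hypotheses of the Ol'shanskii--Osin--Sapir criterion are met by construction: bounded relator length in each grading, hyperbolicity of each partial quotient with constant controlled by the current relator length, and injectivity radii growing superlinearly relative to hyperbolicity constants. This yields an asymptotic cone that is an $\R$-tree, i.e.\ $G$ is lacunary hyperbolic. The main obstacle I anticipate is the \emph{linear} (not merely finite) control of the hyperbolicity constant of $G(\Delta_n)$ in terms of the relator lengths: one needs that the constant of hyperbolicity of a finite $Gr(7)$-presented group is bounded by a universal linear function of the longest defining relator, which should follow by combining the $\Area\le 8|w|$ bound with the standard quantitative equivalence between linear isoperimetric inequalities and $\delta$-thinness of geodesic triangles, but making the dependence explicit and uniform across all $n$ is the delicate point. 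A secondary technical nuisance is verifying that passing to a subgraph (a sub-disjoint-union of components) preserves the $Gr(7)$ condition, which is immediate from the definitions since pieces and essential pieces on a subgraph are pieces and essential pieces on the whole graph, so no new short relations or extra pieces are introduced.
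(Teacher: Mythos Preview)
Your high-level strategy — choose the subsequence recursively so that the injectivity radius of $G(\Delta_n)\to G(\Delta_{n+1})$ dominates the hyperbolicity constant $\delta_n$ of $G(\Delta_n)$ — is exactly right, and it is what the paper does. But the mechanism you propose for achieving large injectivity radius does not work, and the obstacle you flag is not the real one.

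First, the red herring. You identify the ``main obstacle'' as obtaining a linear bound $\delta_n \le C\cdot \max_{j\le n}|\Gamma_{k_j}|$. The paper never proves or uses such a bound. All that is needed is that $G(\Delta_n)$, being defined by a finite $Gr(7)$ graph, is $\delta_n$-hyperbolic for \emph{some} $\delta_n$; once $\delta_n$ is fixed, one simply chooses $k_{n+1}$ so that the injectivity radius of $G(\Delta_n)\to G(\Delta_n\sqcup\Gamma_{k_{n+1}})$ exceeds $n\delta_n$. No quantitative control of $\delta_n$ in terms of relator length is required.

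Second, the genuine gap. You claim that by taking $|\Gamma_{k_{n+1}}|$ large you force ``the shortest relator length contributed at stage $n+1$'' to be large, and implicitly that this bounds the injectivity radius from below. Both steps fail. A large graph can have small girth, so $|\Gamma_{k_{n+1}}|\to\infty$ says nothing about the shortest new relator; there is no hypothesis on girths in the proposition. And even in the presence of large girth, the inequality $r_S(\alpha_n)\ge g(\Gamma_{k_{n+1}})$ is a metric fact (it holds under $Gr'(\tfrac{1}{6})$ via Lemma~\ref{lem:metric_injectivity}) and is not available under the non-metric $Gr(7)$ condition: a short word $w$ could in principle be filled by a diagram containing one very large face from $\Gamma_{k_{n+1}}$ most of whose boundary is interior. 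You also conflate the injectivity radius of the map $\alpha_n$ with the ``shortest nontrivial relation'' in $G(\Delta_n)$; these are different quantities.

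What the paper actually proves is a separate lemma (Lemma~\ref{lem:c7_injectivity_infinity}): for fixed finite $\Gamma$ and pairwise non-isomorphic $\Gamma_n$, the injectivity radii of $G(\Gamma)\to G(\Gamma\sqcup\Gamma_n)$ tend to infinity. The argument is a diagram argument, not a girth argument: if the radii were bounded, one fixed word $w$ would be killed by infinitely many $\Gamma_{k_n}$; in each minimal diagram for $w$ over $\Gamma\sqcup\Gamma_{k_n}$, the curvature formula forces some boundary face lifting to $\Gamma_{k_n}$ to carry an exterior arc that is \emph{not} an essential piece, hence has a label unique to $\Gamma_{k_n}$; these labels are pairwise distinct, so their lengths go to infinity, contradicting the bound $|w|+8|w|(V(\Gamma)+1)$ coming from the area theorem. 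This lemma is the heart of the proof, and your proposal does not contain it.
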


Moreover, we apply results of \cite{Oll} and an argument given in 
\cite[Proposition 3.12]{OOS}, where the question when a group given by an infinite classical $C'(\frac{1}{6})$ presentation is lacunary hyperbolic is answered, to show:

\begin{prop*}[cf.\ Proposition \ref{prop:1/6_lacunary}] Let $(\Gamma_n)_{n\in\N}$ be a sequence of finite, connected graphs such that
 \begin{itemize}
  \item $\Gamma:=\sqcup_{n\in\N}\Gamma_n$ is $Gr'(\frac{1}{6})$-labelled and
  \item $\Delta(\Gamma_n)=O(g(\Gamma_n))$,
 \end{itemize}
where $\Delta(\Gamma_n)$ denotes the diameter and $g(\Gamma_n)$ denotes the girth of each graph. (If $\Gamma_n$ is a tree, set $g(\Gamma_n)=0$.)
Then $G(\Gamma)$ is lacunary hyperbolic if and only if the set of girths $L:=\{g(\Gamma_n)|n\in\N\}$ is sparse, i.e. for all $K\in \R^+$ there exists $a \in \R^+$ such that $[a,aK]\cap L=\emptyset$.
\end{prop*}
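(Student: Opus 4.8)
The plan is to combine two inputs. The first is Ollivier's theorem~\cite{Oll} that in a $Gr'(\frac{1}{6})$-labelled graph every connected component embeds \emph{isometrically} into $\Cay(G(\Gamma),S)$. The second is the characterization of lacunary hyperbolic groups in~\cite{OOS}: a group is lacunary hyperbolic if and only if it is a direct limit $\varinjlim G_i$ of hyperbolic groups, all generated by the fixed finite image of $S$, such that the hyperbolicity constant $\delta_i$ of $G_i$ is $o(r_i)$, where $r_i$ is the injectivity radius of $G_i\to G$. Before using these I would dispose of the case that $L$ is bounded: then the $\Delta(\Gamma_n)$ are bounded, and since a reduced labelling forces every vertex degree of $\Gamma$ to be at most $2|S|$, the $\Gamma_n$ have uniformly bounded size, so only finitely many isomorphism types occur; hence $G(\Gamma)$ is finitely presented, therefore hyperbolic, therefore lacunary hyperbolic, and $L$ is vacuously sparse. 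So assume $L$ is unbounded (this is automatic once $L$ fails to be sparse). Discarding all trees, which contribute no relators, and all but one representative of each isomorphism type, which changes neither $G(\Gamma)$ nor the sparseness of $L$, the size bound above shows that each positive girth value is attained only finitely often, so I may relabel so that $g(\Gamma_1)\le g(\Gamma_2)\le\cdots\to\infty$. Set $G_{(m)}:=G\bigl(\bigsqcup_{n\le m}\Gamma_n\bigr)$; this is a finite $Gr'(\frac{1}{6})$ presentation, hence finitely presented and hyperbolic (Theorem~\ref{thm:linear_isoperimetry}, since $Gr'(\frac{1}{6})$-labelled graphs are in particular $Gr(7)$-labelled), and $G(\Gamma)=\varinjlim_m G_{(m)}$ along the canonical surjections.

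For the direction ``$L$ sparse $\Rightarrow$ lacunary hyperbolic'', sparseness together with $g(\Gamma_m)\to\infty$ lets me choose indices $m_1<m_2<\cdots\to\infty$ with $g(\Gamma_{m_i+1})/g(\Gamma_{m_i})\to\infty$: a gap $[a,aK]\cap L=\emptyset$ with $a$ chosen just above the largest girth lying below it produces such an index, and letting $K\to\infty$ at ever larger scales makes the $m_i$ cofinal. I then need two estimates. First, the hyperbolicity constant of $G_{(m_i)}$ is at most $C\max_{n\le m_i}\Delta(\Gamma_n)$ for a universal $C$, hence $O(g(\Gamma_{m_i}))$. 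Second, the injectivity radius of $G_{(m_i)}\to G(\Gamma)$ is at least $\frac14 g(\Gamma_{m_i+1})$: every relator read on a simple cycle of a $\Gamma_n$ with $n>m_i$ has length at least $g(\Gamma_{m_i+1})$, so Dehn's algorithm for $C'(\frac{1}{6})$ presentations, applied to a word of length at most $\frac12 g(\Gamma_{m_i+1})$ that is trivial in $G(\Gamma)$, never invokes such a relator and therefore already certifies triviality in $G_{(m_i)}$; hence $\ker(G_{(m_i)}\to G(\Gamma))$ contains no nontrivial element of length $\le\frac12 g(\Gamma_{m_i+1})$. Combining the two estimates gives $\delta_i/r_i\to 0$, so by~\cite{OOS} the group $G(\Gamma)=\varinjlim_i G_{(m_i)}$ is lacunary hyperbolic.

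For the converse I would show that if $L$ is not sparse then no asymptotic cone of $G(\Gamma)$ is an $\R$-tree. Fix $K$ with $[a,aK]\cap L\neq\emptyset$ for all $a>0$, and fix an arbitrary ultrafilter $\omega$ and scaling sequence $d_k\to\infty$. For each $k$ pick $\Gamma_{n_k}$ with $g(\Gamma_{n_k})\in[d_k,Kd_k]$ and let $c_k$ be a shortest cycle of $\Gamma_{n_k}$. A short argument (a path across $c_k$ strictly shorter than the corresponding arc of $c_k$ would yield a closed walk of length less than the girth) shows that $c_k$ is an isometric subgraph of $\Gamma_{n_k}$; composing with the isometric embedding $\Gamma_{n_k}\hookrightarrow\Cay(G(\Gamma),S)$ from~\cite{Oll}, the cycle $c_k$ is isometrically embedded in the Cayley graph. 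After left-translating so that $c_k$ passes through the basepoint, $c_k$ lies in the ball of radius $Kd_k$, so it survives in the cone; rescaling by $1/d_k$ and passing to the $\omega$-limit yields an isometrically embedded circle of circumference $\lim_\omega g(\Gamma_{n_k})/d_k\in[1,K]$ inside $\Con^\omega\bigl(G(\Gamma);(d_k)\bigr)$. An $\R$-tree contains no embedded circle, so this cone is not an $\R$-tree; as $\omega$ and $(d_k)$ were arbitrary, $G(\Gamma)$ is not lacunary hyperbolic.

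The hard part will be the first estimate of the second paragraph. The naive bound on the hyperbolicity constant in terms of the longest relator is worthless here: a component $\Gamma_n$ of small diameter can carry simple cycles---and hence relators---vastly longer than its diameter. The remedy is Ollivier's isometric embedding, which forces each face of a reduced van Kampen diagram over $G_{(m_i)}$ to have diameter at most $\Delta(\Gamma_n)$ for its defining component; feeding this into the Greendlinger-type ladder structure underlying the curvature proof of Theorem~\ref{thm:linear_isoperimetry} bounds the thinness of geodesic bigons, and hence the hyperbolicity constant, by $\max_{n\le m_i}\Delta(\Gamma_n)$. A lesser point to verify is that the precise bookkeeping of the direct-limit criterion of~\cite{OOS} (which injectivity radius enters, and that the $G_i$ may be taken finitely presented) is met; the estimates above are robust enough for this.
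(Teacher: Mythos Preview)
Your overall strategy matches the paper's: for the forward direction, exhibit $G(\Gamma)$ as a direct limit of finitely presented $Gr'(\tfrac{1}{6})$ groups with $\delta_i/r_i\to 0$; for the converse, use Ollivier's isometric embedding to push girth cycles into the Cayley graph and obtain an embedded circle in every asymptotic cone. The differences are in how the two key estimates are obtained. You identify the hyperbolicity bound as the ``hard part'' and propose an ad hoc argument via face diameters and a ladder/bigon analysis; in the paper this is a one-line citation: Strebel's classification of reduced geodesic-triangle diagrams over $C'(\tfrac{1}{6})$-type presentations (Lemma~\ref{lem:metric_delta}) gives directly that $G(\Gamma^k)$ is $2\Delta$-hyperbolic with $\Delta=\max_n\Delta(\Gamma_n)$, so no new work is needed there. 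For the injectivity radius you invoke Dehn's algorithm, which is valid but slightly weaker and more delicate to justify in the graphical setting; the paper instead uses Ollivier's lemma (Lemma~\ref{lem:metric_injectivity}) that every face $f$ of a minimal diagram satisfies $|\partial f|\le|\partial D|$, which immediately forces any minimal diagram for a short word to avoid all large-girth components and yields $r_i\ge g(\Gamma_{m_i+1})$ rather than a constant fraction thereof. In short, your proof is correct and the architecture is the same, but both estimates you single out are already packaged as lemmas in the literature (Strebel and Ollivier), so the argument is shorter than you anticipate.
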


\subsection*{Acknowledgements} The author would like to thank his advisor, Goulnara Arzhantseva, for suggesting the topic, sharing her expertise and giving encouragement to write this paper and his colleagues from the Geometric group theory research group at the University of Vienna for helpful discussions, comments and corrections. The author also appreciates the anonymous referee's constructive remarks.

\section{Extending classical small cancellation theory}\label{section:preliminaries}

\subsection{Examples and first observations}\label{subsection:p1}
Following the definitions of the graphical small cancellation conditions given in the introduction, we provide two examples and make some basic observations about pieces and the graphical small cancellation conditions. 

\begin{example}
 
Let $S=\{a,b,c\}$, and let $\Gamma$ be as in Figure \ref{figure:C(6)-example}. The group $G(\Gamma)$ is given by the presentation $$\langle a,b,c|a^2c^{-1}b^{-2}a^{-1}b^{-1},a^2b^{-1}c^{-2}a^{-1}c^{-1}\rangle.$$ The graph $\Gamma$ satisfies the $C'(\frac{1}{6})$ condition: The labelling is reduced, any piece that is subpath of a simple cycle has length at most 1, and any cycle has length at least 7. Note that the presentation does not satisfy the classical $C'(\frac{1}{6})$ condition. Also note that this is a maximal graphical $C'(\frac{1}{6})$ presentation in the alphabet $S$, i.e.\ one cannot add more $S$-labelled cycles to the graph retaining the $C'(\frac{1}{6})$ condition, since any such cycle would produce new pieces of length 2 on $\Gamma$ that would lead to a violation of the $C'(\frac{1}{6})$ condition.

\end{example}

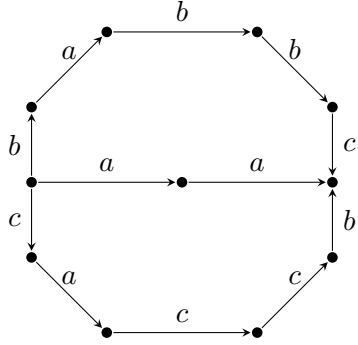
\begin{figure*}[ht]
\begin{tikzpicture}[>=stealth,shorten <=2.5pt, shorten >=2.5pt]
\node[coordinate] at (0,1) (X1) {};
\node[coordinate] at (1,2) (X2) {};
\node[coordinate] at (3,2) (X3) {};
\node[coordinate] at (4,1) (X4) {};

\node[coordinate] at (0,0) (Y1) {};
\node[coordinate] at (2,0) (Y2) {};
\node[coordinate] at (4,0) (Y3) {};

\node[coordinate] at (0,-1) (Z1) {};
\node[coordinate] at (1,-2) (Z2) {};
\node[coordinate] at (3,-2) (Z3) {};
\node[coordinate] at (4,-1) (Z4) {};

\fill (X1) circle (2pt);\fill (X2) circle (2pt);\fill (X3) circle (2pt);\fill (X4) circle (2pt);
\fill (Y1) circle (2pt);\fill (Y2) circle (2pt);\fill (Y3) circle (2pt);
\fill (Z1) circle (2pt);\fill (Z2) circle (2pt);\fill (Z3) circle (2pt);\fill (Z4) circle (2pt);

\draw[->] (X1) to node [above] {\small $a$} (X2);
\draw[->] (X2) to node [above] {\small $b$} (X3);
\draw[->] (X3) to node [above] {\small $b$} (X4);

\draw[->] (Y1) to node [left] {\small $b$} (X1);\draw[->] (X4) to node [right] {\small $c$} (Y3);
\draw[->] (Y1) to node [above] {\small $a$} (Y2);\draw[->] (Y2) to node [above] {\small $a$} (Y3);
\draw[->] (Y1) to node [left] {\small $c$} (Z1);\draw[->] (Z4) to node [right] {\small $b$} (Y3);

\draw[->] (Z1) to node [above] {\small $a$} (Z2);
\draw[->] (Z2) to node [above] {\small $c$} (Z3);
\draw[->] (Z3) to node [above] {\small $c$} (Z4);
\end{tikzpicture}
\caption{The $C'(\frac{1}{6})$-labelled graph $\Gamma$.}
\label{figure:C(6)-example}
\end{figure*}

\begin{example}\label{ex:cayley}
 Let $G$ be a group generated by a finite set $S$. Then the Cayley graph $\Cay(G,S)=:\Gamma$ has a natural reduced labelling, and we have $G=G(\Gamma)$. Note that a path on a graph with a reduced labelling is uniquely determined by its starting vertex and its label. The group of labelled-graph-automorphisms of $\Gamma$ acts transitively on the vertex set of $\Gamma$. Therefore, whenever a labelled path has two maps to $\Gamma$, they are essentially equal. Thus $\Gamma$ has no essential pieces and satisfies any $Gr(n)$ or $Gr(\lambda)$ condition. Unless the group is free on $S$, or trivial, it will not satisfy $C(2)$ or $C'(1)$, i.e.\ no nontrivial $Gr(n)$ or $Gr'(\lambda)$ condition, since there will be two distinct cycles with the same label and thus a cycle made up of one piece.
\end{example}

Observe that any essential piece is a piece. Thus the $C(n)$ condition implies the $Gr(n)$ condition, and the $C'(\lambda)$ condition implies the $Gr'(\lambda)$ condition. The treatment of the conditions from Definition \ref{defi:graphical1} and those from Definition \ref{defi:graphical2} is very similar; in fact all results of Subsections \ref{subsection:p1}-\ref{subsection:p3} hold for both conditions and for pieces and essential pieces alike. We will therefore mostly restrict ourselves to stating them for the $C(n)$ or $C'(\lambda)$ conditions and for pieces only. The only difference in statements and proofs is that ``piece" has to be replaced by ``essential piece", ``unique" by ``essentially unique", ``equal" by ``essentially equal", etc.

\vspace{8pt}

\emph{Backtracking} in a path on a graph $\Gamma$ means that an edge of $\Gamma$ is traversed two consecutive times in opposite directions. A path in a graph is \emph{reduced} if there is no backtracking. The reduction of a path is obtained by removing any backtracking. We have the following observations for a piece $p$:
\begin{itemize}
 \item The reduction of $p$ is a piece.
 \item Every subpath of $p$ is a piece.
 \item The inverse path of $p$ is a piece. 
\end{itemize}

\begin{lem} Let $\Gamma$ be a labelled graph, $n\in\N$ and $\lambda>0$. Then the following are equivalent:
\begin{itemize}
\item[i)] $\Gamma$ satisfies $C(n)$.
\item[ii)] No reduced cycle is the concatenation of fewer than $n$ pieces.
\item [iii)] No simple cycle is the concatenation of fewer than $n$ pieces.
\end{itemize}
Moreover, if $\Gamma$ satisfies $C'(\lambda)$, then $\Gamma$ satisfies $C(\lfloor \frac{1}{\lambda} \rfloor +1)$.
\end{lem}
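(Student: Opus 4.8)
The plan is to prove the equivalence of (i), (ii), (iii) via the implications (iii) $\Rightarrow$ (ii) $\Rightarrow$ (i) $\Rightarrow$ (iii), and then deduce the last sentence from (iii). The implication (i) $\Rightarrow$ (iii) is trivial, since every simple cycle is a nontrivial closed path (a simple cycle bounds a disc and hence... actually one must note that a simple cycle in a graph is never $0$-homotopic, being an embedded circle, so it is a nontrivial closed path, and then (i) applies directly). Likewise (ii) $\Rightarrow$ (i) requires a little care: given a nontrivial closed path $\gamma$ that is a concatenation of $k < n$ pieces, I would pass to its reduction $\gamma'$; since reduction removes only backtracking and $\gamma$ is not $0$-homotopic, $\gamma'$ is a nonempty reduced cycle, and by the bullet observations above (the reduction of a piece is a piece, and reduction can only delete or shorten the constituent subpaths) $\gamma'$ is still a concatenation of at most $k < n$ pieces, contradicting (ii). The one subtlety is that after reduction some of the $k$ pieces may become empty; this only helps, as it lowers the count.

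The main work is (iii) $\Rightarrow$ (ii). Here I would take a reduced cycle $c$ that is a concatenation of $k < n$ pieces and show it can be refined to a \emph{simple} cycle with no more pieces. Since $c$ is reduced, if it is not already simple it revisits some vertex; I would pick an innermost such repetition, i.e.\ a subpath $c_1$ of $c$ that is itself a nonempty simple closed path (a ``lasso'' argument: among all repeated-vertex subpaths choose one of minimal length — it must be simple). Then $c_1$ is a simple cycle, and each of its edges lies inside one of the $k$ original pieces, so $c_1$ is a concatenation of at most $k < n$ pieces — giving the contradiction with (iii). (Alternatively, one iterates: splicing out $c_1$ yields a shorter reduced cycle, still a concatenation of at most $k$ pieces, and one recurses until a simple cycle is reached; either packaging works, but the innermost-subpath version is cleaner.) The point to be careful about is that each edge of the extracted simple cycle genuinely inherits piece-membership from $c$, so that the decomposition of $c_1$ into pieces has at most as many terms as that of $c$; this is immediate since a subpath of a piece is a piece and we are only restricting the given decomposition.

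For the final assertion, suppose $\Gamma$ satisfies $C'(\lambda)$ and let $\gamma$ be any simple cycle, written as a concatenation of pieces $p_1 p_2 \cdots p_k$; I must show $k \geq \lfloor 1/\lambda \rfloor + 1$. By the $C'(\lambda)$ condition each $p_i$ is a subpath of the simple cycle $\gamma$, hence $|p_i| < \lambda|\gamma|$, so $|\gamma| = \sum_{i=1}^k |p_i| < k\lambda|\gamma|$, forcing $k > 1/\lambda$, i.e.\ $k \geq \lfloor 1/\lambda\rfloor + 1$. Since this holds for every simple cycle, part (iii) (just proved equivalent to $C(n)$) gives that $\Gamma$ satisfies $C(\lfloor 1/\lambda\rfloor + 1)$. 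I expect the innermost-simple-subpath extraction in (iii) $\Rightarrow$ (ii) to be the only genuinely nontrivial step; the rest is bookkeeping with the three bullet observations on pieces.
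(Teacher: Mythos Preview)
Your proposal is correct and uses essentially the same ideas as the paper: the paper proves $\text{i)}\Rightarrow\text{ii)}\Rightarrow\text{iii)}$ as obvious and then $\text{iii)}\Rightarrow\text{i)}$ in one step by first reducing a nontrivial closed path and then extracting a simple closed subpath, whereas you run the cycle of implications in the opposite direction and split that same argument into the two halves $\text{(ii)}\Rightarrow\text{(i)}$ (reduce) and $\text{(iii)}\Rightarrow\text{(ii)}$ (extract an innermost simple subcycle). Your treatment of the final assertion via $|\gamma|=\sum|p_i|<k\lambda|\gamma|$ is exactly the computation the paper leaves implicit when it says the statement follows from $\text{iii)}\Rightarrow\text{i)}$.
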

\begin{proof}
 The implications $\text{i)}\Rightarrow \text{ii)}\Rightarrow \text{iii)}$ are obvious. Now suppose iii) holds. Let $\gamma$ be a nontrivial cycle on $\Gamma$, and suppose $\gamma=p_1p_2\ldots p_k$ where $p_i$ is a piece for each $i$. Then the reduction of $\gamma$ may be written as $q_1q_2\ldots q_l$ where each $q_i$ is a subpath of the reduction of a $p_j$, and $l\leq k$. Let $\gamma'$ be a subpath of $\gamma$ that is a simple cycle. Then $\gamma'=r_1r_2\ldots r_m$, where each $r_i$ is a subpath of some $q_j$, and $m\leq l$. By assumption, we have $m\geq n$ and hence $k\geq l\geq m\geq n$.
 
The last statement follows from the implication $\text{iii)}\Rightarrow \text{i)}$.
\end{proof}

\subsection{Diagrams}\label{subsection:p2}

We briefly recall tools and results of classical small cancellation theory, which we will use to present graphical small cancellation theory. For an introduction to classical small cancellation theory see \cite[Chapter V]{LS}.

\vspace{8pt}

A \emph{singular disk diagram} $D$ in the alphabet $S$ is a finite, simply connected, 2-dimensional CW-complex embedded into the plane with the following additional data:
\begin{itemize}
 \item Each 1-cell is oriented and labelled by an element of $S$.
 \item A base vertex $v$ in the topological boundary of $\R^2\setminus D$ is specified. (We also denote $(D,v)$. We omit mentioning the base vertex if this is not necessary.)
\end{itemize}
A \emph{simple} disk diagram is a singular disk diagram that is homeomorphic to a disk. Equivalently, a simple disk diagram is singular disk diagram without cut-points.

The \emph{boundary path} $\partial D$ of a singular disk diagram $D$ is the closed simplicial path along the topological boundary of $\R^2\setminus D$ from the base vertex in counterclockwise direction. The \emph{label} of $\partial D$ (or \emph{boundary label} of $D$) is the element of the free monoid on $S\sqcup S^{-1}$ obtained by reading the labels of the edges of $\partial D$,
where each letter is given exponent 1 if the corresponding edge is traversed in its direction and exponent $-1$ if it is traversed in the opposite direction.

We also introduce spherical diagrams following \cite{CH}. A spherical complex is a set of 2-spheres embedded into $\R^3$ connected by simple curves such that no sphere contains the other, such that the intersection of two spheres consists of at most one point and such that the whole complex is simply connected. (Recall that a curve is simple if it has no self-intersections.)

A \emph{spherical diagram} in the alphabet $S$ is finite 2-complex tessellating a spherical complex, where each 1-cell is oriented and labelled by an element of $S$. A \emph{simple} spherical diagram is a spherical diagram tessellating a single 2-sphere. It is our convention that a spherical diagram has empty boundary.

The word \emph{diagram} will refer to a singular disk diagram or a spherical diagram. The \emph{area} of a diagram is the number of its 2-cells.

A \emph{face} $f$ of a diagram is the image of a closed 2-cell $b$ under its characteristic map. The \emph{boundary path} $\partial f$ of $f$ is the simplicial path obtained as the image of the simple closed path along the boundary of $b$. The \emph{label} of $\partial f$ (or \emph{boundary label} of $f$) is obtained by reading the labels of the edges of $\partial f$. (Here choices of basepoints and orientations are necessary. If these are not specified, the boundary path and boundary label are still unique up to inversion and cyclic shifts, which will often be sufficient for our considerations.)

For the finite set $S$, let $M(S)$ denote the free monoid on $S\sqcup S^{-1}$. For a set of words $R\subset M(S)$, a \emph{diagram over $R$} is a diagram $D$ such that for each face $f$ of $D$ there exist an orientation and a basepoint for  $\partial f$ such that the label of $\partial f$ lies in $R$. Given a labelled graph $\Gamma$, a \emph{diagram over $\Gamma$} is a diagram over the set of words read on nontrivial cycles on $\Gamma$.

Let $w\in M(S)$. A \emph{diagram for $w$ over $R$} (respectively \emph{over $\Gamma$}) is a diagram over $R$ (respectively $\Gamma$) whose boundary label is $w$.  A \emph{minimal} diagram for $w$ over $R$ (respectively $\Gamma$) is a diagram $D$ for $w$ over $R$ (respectively $\Gamma$) such that
\begin{itemize}
\item the area of $D$ is minimal among all diagrams for $w$ over $R$ (respectively $\Gamma$), and
\item the number of edges of $D$ is minimal among all such diagrams of minimal area.
\end{itemize}

The following theorem is a version of the so-called ``van Kampen Lemma" \cite[Section V.1]{LS}, which is central in small cancellation theory.

\begin{thm}
Let $G$ be a group given by the presentation $\langle S|R\rangle$. An element $w$ of the free monoid $M(S)$ over $S$ satisfies $w=1$ in $G$ if and only if there exists a singular disk diagram over $R$ such that the boundary label of $D$ is $w$.  
\end{thm}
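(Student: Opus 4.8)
The plan is to prove the two implications separately: the ``if'' direction by induction on the area, and the ``only if'' direction by an explicit construction followed by diagram surgery.

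For the ``if'' direction, suppose $D$ is a singular disk diagram over $R$ with boundary label $w$, and induct on $\Area(D)$. If $\Area(D)=0$, then $D$ is a finite, simply connected $1$-complex, i.e. a tree; its boundary path traverses every edge once in each direction, so $\ell(\partial D)$ freely reduces to the empty word and $w=1$ already in $F(S)$, hence in $G$. If $\Area(D)\ge 1$, the first step is the standard topological observation that some face $f$ of $D$ has a boundary subpath $q$ which is also a subpath of $\partial D$ (pass to an outermost face, decomposing along cut vertices if necessary). Write $\partial f=q\,q'$ up to choice of basepoint and orientation, so that $\ell(q)\ell(q')$ is a cyclic conjugate of some $r^{\pm1}$ with $r\in R$, and write $\partial D=\alpha\, q\,\beta$. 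Deleting $f$ together with the edges of $q$ that no longer bound a face yields a diagram $D'$ over $R$ with $\Area(D')<\Area(D)$ and boundary label $w'$ equal, as an element of $F(S)$, to $\ell(\alpha)\,\ell(q')^{-1}\,\ell(\beta)$. Then $w\,(w')^{-1}=\ell(\alpha)\big(\ell(q)\ell(q')\big)\ell(\alpha)^{-1}\in\langle\langle R\rangle\rangle$, so $w$ and $w'$ represent the same element of $G$; by the induction hypothesis $w'=1$ in $G$, hence $w=1$ in $G$.

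For the ``only if'' direction, suppose $w=1$ in $G$. Then the image of $w$ in $F(S)$ lies in $\langle\langle R\rangle\rangle$, so we may write, in $F(S)$,
$$w=\prod_{i=1}^{n}u_i\,r_i^{\varepsilon_i}\,u_i^{-1},\qquad u_i\in F(S),\ r_i\in R,\ \varepsilon_i\in\{\pm1\}.$$
Build the ``wedge of lollipops'' diagram $D_0$: from the base vertex $v$ run, in planar order, stems labelled $u_1,\dots,u_n$, attach to the end of the $i$-th stem a polygonal $2$-cell with boundary label $r_i^{\varepsilon_i}$, and return along the stem. This is a singular disk diagram over $R$, and reading $\partial D_0$ counterclockwise from $v$ gives exactly the word $\prod_i u_i r_i^{\varepsilon_i} u_i^{-1}$. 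It remains to modify $D_0$ so that its boundary label becomes the specific word $w$ rather than this product. Since the two words are equal in $F(S)$, they differ by a finite sequence of insertions and deletions of cancelling pairs $x x^{-1}$; each deletion is realized on the diagram by a fold identifying two adjacent boundary edges carrying the same label and orientation (which preserves the property of being a diagram over $R$), and each insertion is realized by attaching a spur, a single edge traversed twice consecutively in opposite directions. Performing these operations transforms $D_0$ into a singular disk diagram over $R$ with boundary label exactly $w$.

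The main obstacle is the purely topological step in the ``if'' direction: verifying that a positive-area simply connected planar diagram always has a face sharing an edge with the boundary path, and organizing the deletion of that face (including the handling of cut vertices and of the case where a face meets $\partial D$ in several disjoint arcs) so that the result is again a legitimate singular disk diagram with the claimed boundary label. This bookkeeping is routine and is carried out in detail in \cite[Section~V.1]{LS}.
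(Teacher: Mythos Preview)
The paper does not prove this statement at all: it is quoted as the classical van Kampen Lemma with a reference to \cite[Section~V.1]{LS}, and the paper moves on immediately. So there is no ``paper's own proof'' to compare against.

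Your argument is the standard one and is essentially what one finds in \cite{LS}. The ``only if'' direction via the wedge-of-lollipops diagram followed by folds and spur insertions is exactly the textbook construction. For the ``if'' direction your induction on area is also standard; the only point that deserves a word of caution is the surgery step. When $\partial f$ is not an embedded circle (the face is pinched at a vertex), deleting $f$ together with the arc $q$ in one stroke can fail to produce a singular disk diagram: one should instead remove only the open $2$-cell and then iteratively prune free edges, or first pass to a simple disk subdiagram by cutting at a cut vertex. You flag this yourself and defer to \cite[Section~V.1]{LS}, which is entirely appropriate here since the paper does the same.
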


An \emph{arc} in a diagram is an embedded path all interior vertices of which have degree 2 and whose initial and terminal vertices have degrees different from 2. A \emph{spur} is an arc which has a vertex of degree $1$. A face $f$ of a diagram $D$ is called \emph{interior} if the intersection $\partial f\cap \partial D$ contains no edge. All other faces are called \emph{exterior} or \emph{boundary faces}.

Let $p,q$ be positive integers. A diagram $D$ is a \emph{$(p,q)$-diagram} if every interior vertex has degree at least $p$ and if the boundary of every interior face consists of at least $q$ edges. If $D$ is a diagram in which the boundary path of each interior face is the concatenation of at least $q$ arcs, we can consider $D$ as a $(3,q)$-diagram if we ``forget" vertices of degree two. Forgetting a vertex $v$ of degree two means removing the vertex and replacing the two edges incident at $v$ by a single edge (cf. Figure \ref{figure:forget}). In this context, labels and orientations of edges will play no role and will therefore be ignored.

A diagram is a \emph{$[p,q]$-diagram} if every interior vertex has degree at least $p$ and if the boundary of every face consists of at least $q$ edges. Note that a spherical $(p,q)$-diagram is a $[p,q]$-diagram.

\begin{figure}[ht]
\begin{tikzpicture}[>=stealth,shorten <=2.5pt, shorten >=2.5pt]
\coordinate (A1) at (0,1);
\coordinate (A2) at (0,-1);
\coordinate (A3) at (1,0);
\coordinate (A4) at (2,0);
\coordinate (A5) at (3,0);
\coordinate (A6) at (4,1);
\coordinate (A7) at (4,-1);

\fill (A3) circle (2pt);
\fill (A4) circle (2pt);
\fill (A5) circle (2pt);

\draw[-] (A1) to (A3);
\draw[-] (A2) to (A3);
\draw[-] (A3) to (A4);
\draw[-] (A4) to (A5);
\draw[-] (A5) to (A6);
\draw[-] (A5) to (A7);

\coordinate (B1) at ($(A1)+(6,0)$);
\coordinate (B2) at ($(A2)+(6,0)$);
\coordinate (B3) at ($(A3)+(6,0)$);
\coordinate (B4) at ($(A4)+(6,0)$);
\coordinate (B5) at ($(A5)+(6,0)$);
\coordinate (B6) at ($(A6)+(6,0)$);
\coordinate (B7) at ($(A7)+(6,0)$);

\fill (B3) circle (2pt);
\fill (B5) circle (2pt);

\draw[-] (B1) to (B3);
\draw[-] (B2) to (B3);
\draw[-] (B3) to (B5);
\draw[-] (B5) to (B6);
\draw[-] (B5) to (B7);
\end{tikzpicture}
\caption{Forgetting a vertex of degree two.}
\label{figure:forget}
\end{figure}
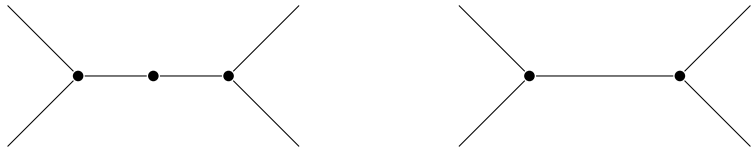

The following are standard results from classical small cancellation theory which we will use later on:

\begin{thm}[Curvature formula I, cf. {\cite[Corollary V.3.4]{LS}}] 
Let $D$ be a $(3,6)$-singular disk diagram with at least two faces. For a face $f$, let $i(f)$ denote the number of its interior edges. Then
 $$\sum_{f\mathrm{\ a\ boundary\ face\ of\ } D} (4-i(f))\geq 6.$$
\end{thm}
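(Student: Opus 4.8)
The statement to prove is the Curvature Formula I: for a $(3,6)$-singular disk diagram $D$ with at least two faces, $\sum_{f} (4 - i(f)) \geq 6$, where the sum runs over boundary faces and $i(f)$ counts interior edges of $f$. My plan is to derive this from the Euler characteristic of the planar 2-complex $D$, namely $V - E + F = 1$ (where $F$ counts the $2$-cells, so the ``face at infinity'' is not included), combined with the $(3,6)$ hypotheses: every interior vertex has degree $\geq 3$ and every interior face has $\geq 6$ edges on its boundary. The classical combinatorial trick is a double-counting argument assigning ``curvature'' to vertices and faces so that the total curvature equals a fixed constant (here proportional to $\chi(D) = 1$), and then pushing all the positive curvature to the boundary.

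First I would set up the combinatorial Gauss--Bonnet bookkeeping. Assign to each face $f$ the quantity $4 - (\text{number of boundary edges of } f)$ — or more precisely, since faces may have repeated edges on their boundary, one counts edges with multiplicity as traversed by $\partial f$. Assign to each interior vertex $v$ the quantity $4 - \deg(v)$, and leave boundary vertices to be handled via the boundary path. Summing the identity $\sum_v (\text{something}) + \sum_f (\text{something})$ and using $V - E + F = 1$, one obtains an exact curvature identity whose right-hand side is a constant ($6$, after clearing the factor). The standard normalization (as in Lyndon--Schupp V.3) is: for each face $f$ set its curvature to $6 - 2d(f) + \sum(\text{angle contributions})$, or work directly with the formula $\sum_{f}(4 - i(f)) = 6 + \sum_{\text{interior } v}(\deg v - 3)\cdot(\text{nonneg}) + (\text{boundary terms})$. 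I would write this out cleanly by first reducing to the case where $D$ has no spurs and no vertices of degree $2$ (forgetting degree-$2$ vertices as described before the statement, which changes neither the $i(f)$ for boundary faces in the relevant way nor the validity of the inequality — one must check this reduction carefully), so that $D$ is genuinely a $(3,6)$-diagram in the reduced sense.

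The core computation: with $F = F_{\mathrm{int}} + F_{\mathrm{bd}}$ splitting faces into interior and boundary faces, I would bound $2E \geq \sum_f d(f) \geq 6 F_{\mathrm{int}} + \sum_{f \in F_{\mathrm{bd}}} d(f)$ using the $(3,6)$ face condition, and $2E = \sum_v \deg(v) \geq 3 V_{\mathrm{int}} + \sum_{v \in \partial D} \deg(v)$ using the interior-vertex condition. Plugging these into $6(V - E + F) = 6$ and rearranging, one isolates $\sum_{f \in F_{\mathrm{bd}}}(4 - i(f))$ on one side with a provably nonnegative remainder plus $6$ on the other. The key identity linking $i(f)$ (interior edges of $f$) to $d(f)$ (total boundary edges of $f$) is that every edge of $D$ is either interior — on the boundary of exactly two faces (or twice on one) — or exterior — on $\partial D$ and on exactly one face; so $\sum_f i(f) = 2 E_{\mathrm{int}}$ and $\sum_f (d(f) - i(f)) = E_{\mathrm{bd}}$, where $E_{\mathrm{bd}}$ counts exterior edges with the multiplicity they appear in boundary paths.

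The main obstacle, and the place demanding the most care, is the handling of the \emph{topological boundary} $\partial D$ of a singular (non-simple) disk diagram: cut-vertices, pendant edges, and faces that touch the boundary in a disconnected set all make ``$i(f)$'' and the counting of boundary edges subtle, since an edge can be traversed twice by $\partial D$. I expect to deal with this either by the reduction to the case without spurs and degree-$2$ vertices plus a direct analysis of how cut-points contribute nonnegatively to the curvature (each cut-point, being a boundary vertex of degree $\geq 3$ in particular, only helps), or by citing the structure of singular diagrams as trees of simple components and running the simple-diagram case (the classical Lyndon formula) on each block, then reassembling — noting that the ``$\geq 6$'' is achieved additively over blocks and at worst one needs $\geq 6$ overall. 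Since the paper explicitly cites \cite[Corollary V.3.4]{LS}, I would in fact structure the write-up to invoke that reference for the simple case and only supply the reduction argument from singular to simple, which is the honest new content here.
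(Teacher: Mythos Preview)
The paper does not give a proof of this statement at all: it is listed among ``standard results from classical small cancellation theory which we will use later on'' and simply cited from \cite[Corollary V.3.4]{LS}. So there is nothing to compare your argument against in the paper itself.

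Your outline is the standard Lyndon--Schupp argument: combinatorial Gauss--Bonnet from $V-E+F=1$, the double-count $\sum_f d(f)=2E$ and $\sum_v \deg(v)=2E$, and then the inequalities coming from the $(3,6)$ hypotheses to push all positive curvature to boundary faces. That is exactly how the cited reference proves it, and your identification of the one genuinely delicate point --- the bookkeeping for singular diagrams with cut-points and spurs, where edges may be traversed twice by $\partial D$ --- is accurate. Your proposed strategy of either reducing to the simple case block by block or handling cut-vertices directly as nonnegative contributions is sound; either works. In short, your plan is correct and is precisely the content of the reference the paper invokes.
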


In fact, more refined versions of this result can be found in \cite{MW} and \cite{Str}.

\begin{thm}[Curvature formula II, cf. {\cite[Corollary V.3.3]{LS}}] Let $D$ be a $[3,6]$-singular disk diagram. For a vertex $v$, let $d(v)$ denote its degree. Then
 $$\sum_{v\in\partial D}(2+\frac{1}{2}-d(v))\geq 3,$$
where the sum is taken over all vertices in the boundary.
\end{thm}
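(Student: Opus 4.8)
The final statement is Curvature Formula II, which I would derive exactly as in \cite[Corollary V.3.3]{LS} from the Euler characteristic of a planar $2$-complex, dualizing the argument used for Curvature Formula I but applied to the $[3,6]$ hypothesis (degree condition on interior vertices, $6$-edge condition on \emph{all} faces). The plan is to use the combinatorial Gauss--Bonnet / Euler formula $V - E + F = 1$ for the underlying CW-structure of the simply connected diagram $D$ (we include the outer region as the unbounded face, so $F$ counts internal faces plus one). First I would set up the standard angle-assignment bookkeeping: to each corner (vertex--face incidence) of $D$ assign a ``curvature'' and rewrite $\sum_{\text{corners}} 1 = 2E$ (each edge contributes two corner-sides) together with the relation counting corners by faces and by vertices.

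Concretely, I would introduce the quantity $\chi(D)=1$ and expand it using the standard identity
\begin{equation*}
2\pi\,\chi(D) \;=\; \sum_{f} \Bigl(2\pi - \sum_{\text{corners of }f}\bigl(\pi - \angle c\bigr)\Bigr) \;+\; \sum_{v}\Bigl(2\pi - \sum_{\text{corners at }v}\angle c\Bigr),
\end{equation*}
with a choice of ``angles'' $\angle c$; choosing $\angle c = \pi/3$ at every interior corner and adjusting boundary corners, the $[3,6]$-hypotheses make every interior-face term and every interior-vertex term non-positive, so all the positive curvature is forced onto $\partial D$. Here the face condition $q=6$ gives, for each internal face with $m\ge 6$ sides, a defect $2\pi - m\cdot(2\pi/3 - \pi)\cdot(-1)$... more cleanly: with interior angle $\pi/3$ the face-defect is $2\pi - \sum(\pi-\pi/3) = 2\pi - m\pi/3 \le 2\pi - 6\pi/3 = 0$, and the interior-vertex defect is $2\pi - d(v)\pi/3 \le 0$ since $d(v)\ge 3$ wait that gives $\le \pi$, so one must instead use the sharper $[3,6]$ weighting (angles $\pi - 2\pi/d$ or the Lyndon weighting giving the ``$2+\tfrac12-d(v)$'' form); I would follow Lyndon--Schupp's exact coefficients, which are engineered precisely so that interior contributions vanish or are non-positive and the boundary sum is $\ge 3$ after normalizing $2\pi$ to $1$ unit $=\tfrac{2\pi}{?}$. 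The bound $\sum_{v\in\partial D}(2+\tfrac12 - d(v)) \ge 3$ then drops out by moving all non-positive interior terms to the other side of $\chi(D)=1$ and clearing denominators.

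The main obstacle is purely bookkeeping: getting the boundary-corner contributions right. Unlike the interior, boundary vertices can have degree $1$ or $2$ (spurs, degree-two vertices on $\partial D$), and a single face can meet $\partial D$ in several arcs, so the naive corner count over-/under-counts unless one is careful about how many corners a boundary face contributes and how spurs are handled. I would handle this by first reducing to the case $D$ has no spurs and no vertices of degree $\le 1$ in the interior (spurs can be pruned without decreasing the left-hand sum, since removing a degree-one vertex removes a boundary vertex $v$ with $2+\tfrac12-d(v) = 2+\tfrac12-1 = \tfrac32 >0$, only helping; one must check this prune doesn't create a violation of $[3,6]$), and then invoke the identity in its standard form. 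Since this is a verbatim restatement of \cite[Corollary V.3.3]{LS}, the honest write-up is: cite Lyndon--Schupp, or reproduce their half-page Euler-characteristic computation with the coefficient choice $(2+\tfrac12-d(v))$ on the boundary and the complementary non-positive weights in the interior guaranteed by $d(v)\ge 3$ and faces having $\ge 6$ sides. No genuinely new idea is needed beyond faithfully transcribing that computation.
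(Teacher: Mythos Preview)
The paper does not give a proof of this statement; it is quoted as a standard result from classical small cancellation theory and simply cited as \cite[Corollary~V.3.3]{LS}. So there is no ``paper's own proof'' to compare your proposal against, and your concluding remark --- that the honest write-up is to cite Lyndon--Schupp --- is exactly what the paper does.

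As for the substance of your sketch: your overall plan (Euler characteristic plus the $[3,6]$ hypotheses forcing all interior curvature to be non-positive) is correct and is precisely the method in \cite{LS}. However, the execution via angle assignments is muddled, as you yourself notice mid-sentence. The clean route is purely combinatorial: from $V-E+F=1$ (bounded faces only) one gets $6=6V-6E+6F$; writing $6E = 2\sum_v d(v) + \bigl(\sum_f d(f) + |\partial D|\bigr)$ using $2E=\sum_v d(v)$ and $2E=\sum_f d(f)+|\partial D|$, this rearranges to
\[
\sum_v \bigl(6-2d(v)\bigr) + \sum_f \bigl(6-d(f)\bigr) - |\partial D| = 6.
\]
The $[3,6]$ conditions make the interior-vertex and face contributions non-positive, leaving $\sum_{v\in\partial D}(6-2d(v)) - |\partial D| \ge 6$; since each boundary vertex is visited at least once by $\partial D$, this yields $\sum_{v\in\partial D}(5-2d(v))\ge 6$, i.e.\ the claim. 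This avoids the angle bookkeeping entirely.

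One small error in your spur discussion: removing a degree-one boundary vertex \emph{decreases} the left-hand side (you lose $+\tfrac32$ and gain only $+1$ at the neighbour), so pruning spurs makes the inequality \emph{harder}, not easier, for the pruned diagram. The reduction still works --- proving the bound for the pruned diagram implies it for the original --- but your phrasing ``only helping'' has the logic reversed.
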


The following theorem is an immediate conclusion of \cite[Theorem 6.2]{LS} and the first curvature formula.
\begin{thm}[Area theorem I]
Let $D$ be a $(3,6)$-singular disk diagram. Then
$$ |D|\leq 3|\partial D|^2,$$
where $|D|$ is the number of faces of $D$ and $|\partial D|$ is the number of its boundary edges.
\end{thm}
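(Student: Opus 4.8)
The plan is to derive the bound from a layer-by-layer shelling of $D$, with Curvature Formula I used to keep the boundary lengths of the successive layers from growing; this is essentially how Area Theorem I follows from \cite[Theorem 6.2]{LS}.

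First I would normalize $D$: trim any spurs and forget all vertices of degree $2$. Neither operation changes the number of faces, and both can only decrease $|\partial D|$, so it suffices to prove the inequality afterwards, when $D$ is again a $(3,6)$-diagram. The cases $|D|\leq 1$ are immediate (if $|D|\geq 1$ then $|\partial D|\geq 1$, so $3|\partial D|^2\geq 3>|D|$), so assume $|D|\geq 2$ and write $n:=|\partial D|$.

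Set $D_0:=D$ and inductively let $D_{j+1}$ be the $(3,6)$-subdiagram obtained from $D_j$ by deleting all of its boundary faces and renormalizing; let $L$ be the number of nonempty layers, so that $|D|=\sum_{j=0}^{L-1}b_j$, where $b_j$ is the number of boundary faces of $D_j$. Each edge of $\partial D_j$ lies on the boundary of exactly one face, so $b_j\leq|\partial D_j|$, whence $|D|\leq\sum_{j=0}^{L-1}|\partial D_j|$. It therefore suffices to establish that $|\partial D_j|\leq n$ for all $j$ and that $L\leq n$, for then $|D|\leq n^2\leq 3n^2$ (a slightly more careful count, keeping track of the normalization step, recovers the stated constant).

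These two facts — non-expansion of the boundary under peeling, and the resulting bound on the number of layers — are what one extracts from \cite[Theorem 6.2]{LS}, and the engine behind them is Curvature Formula I. When the outer layer of boundary faces of a $(3,6)$-subdiagram $E$ is deleted, a boundary face $f$ removes all of its boundary edges from $\partial E$ and exposes at most $i(f)$ of its edges to the new boundary; the inequality $\sum_f(4-i(f))\geq 6$, summed over the boundary faces of $E$, forces the faces with $i(f)\leq 3$ to outweigh, in this signed count, those with $i(f)\geq 4$, and combined with planarity — consecutive boundary faces share edges that vanish entirely upon deletion — this yields $|\partial D_{j+1}|\leq|\partial D_j|$, with enough slack to bound $L$ by $n$. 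I expect this step — converting the curvature inequality into a uniform non-expansion estimate for the boundary under shelling — to be the main obstacle; once it is in place, the summation above finishes the proof.
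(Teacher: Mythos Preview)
The paper does not supply its own proof of this statement: it records the theorem as ``an immediate conclusion of \cite[Theorem~6.2]{LS} and the first curvature formula'' and moves on. Your outline---normalize, peel off boundary layers, use Curvature Formula~I to show the boundary does not grow, and sum---is exactly the Lyndon--Schupp layer argument that citation points to, so there is no discrepancy in approach; you have also correctly located the one nontrivial step (non-expansion of $|\partial D_j|$ under shelling) and its source, which is all the paper itself implicitly does.
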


A stronger statement holds for $(3,7)$-diagrams. It is proved in \cite[Proposition 2.7]{Str}.
\begin{thm}[Area theorem II]
 Let $D$ be a $(3,7)$-singular disk diagram. Then
$$ |D|\leq 8 |\partial D|.$$
\end{thm}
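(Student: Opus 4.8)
The goal is to prove Area theorem II: for a $(3,7)$-singular disk diagram $D$, one has $|D| \le 8|\partial D|$.

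My plan is to argue by induction on the area $|D|$, using the first curvature formula as the main engine, exactly in the spirit of the proof of Area theorem I from $[\text{LS, Theorem 6.2}]$. The base case $|D| \le 1$ is immediate since then $|\partial D| \ge 7 > 0$ (or $|D|=0$). For the inductive step, assume the bound holds for all $(3,7)$-diagrams of smaller area. If $D$ has more than one face, apply Curvature formula I: since a $(3,7)$-diagram is in particular a $(3,6)$-diagram, we get $\sum_{f \text{ boundary}} (4 - i(f)) \ge 6$, where $i(f)$ is the number of interior edges of $f$. In a $(3,7)$-diagram every interior face has at least $7$ edges on its boundary, so if $f$ is a boundary face with $i(f) \le 3$ interior edges, then $f$ must contribute at least $7 - 3 = 4$ edges to $\partial D$; more precisely, writing $b(f)$ for the number of boundary edges of $f$ (counted with multiplicity along $\partial D$), we have $b(f) \ge 7 - i(f) \ge 4 - i(f) + 3$, hence $b(f) \ge 4 - i(f) + 3$ whenever $4 - i(f) \ge 1$. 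The curvature formula says the total positive "defect" $\sum (4-i(f))$ over boundary faces with $i(f)\le 3$ is at least $6$ (the faces with $i(f)\ge 4$ contribute $\le 0$ and only help).

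The cleanest route: let me count how many faces $D$ has versus how many boundary edges. Actually the standard trick is to remove one boundary face and recurse. Concretely, among the boundary faces, Curvature formula I guarantees there exist boundary faces whose total defect is $\ge 6$; in particular there is at least one boundary face $f_0$ with $i(f_0) \le 3$, and such a face satisfies $b(f_0) \ge 7 - i(f_0) \ge 4$. Let $D'$ be the diagram obtained by deleting $f_0$ (and then removing any resulting spurs or isolated pieces so that $D'$ is again a legitimate $(3,7)$-singular disk diagram — one must check this cleanup does not destroy the $(3,7)$ property, which it does not since removing a boundary face only decreases degrees of interior vertices of the remaining faces, and those vertices become boundary vertices). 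Then $|D'| = |D| - 1$ and $|\partial D'| \le |\partial D| - b(f_0) + i(f_0) \le |\partial D| - (7 - i(f_0)) + i(f_0) = |\partial D| - 7 + 2i(f_0) \le |\partial D| - 1$ using $i(f_0)\le 3$. By induction $|D'| \le 8|\partial D'| \le 8(|\partial D| - 1)$, giving $|D| \le 8|\partial D| - 8 + 1 < 8|\partial D|$.

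The subtle point — and what I expect to be the main obstacle — is managing the edge-counting bookkeeping when $D$ is not simple: a single boundary face can meet $\partial D$ in several disjoint arcs, interior edges can be traversed twice by $\partial D$ is not the issue here but rather that after deleting $f_0$ the complex may become disconnected or acquire cut-vertices, so "the diagram obtained by deleting $f_0$" must be interpreted carefully (pass to a connected component containing the base vertex, or iterate the argument on each piece), and one must verify that the inequality $|\partial D'| \le |\partial D| - (7 - 2i(f_0))$ survives this decomposition. This is exactly the kind of careful diagram surgery carried out in $[\text{Str, Proposition 2.7}]$, and I would follow that reference for the details; the conceptual content is entirely contained in Curvature formula I together with the observation that in a $(3,7)$-diagram a boundary face with few interior edges is forced to have many boundary edges. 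An alternative, more uniform approach avoiding the surgery is to sum the inequality $b(f) \ge \tfrac{1}{8}\big(\text{something}\big)$ over all faces simultaneously via a discharging/weighting argument derived from the Euler characteristic formula $V - E + F = 1$, distributing curvature so that each interior face ends up with nonpositive charge and each boundary face with charge bounded below in terms of its boundary edges; this reproduces the factor $8$ directly. Either way, the heart of the matter is purely the combinatorics of $(3,7)$-diagrams, with no reference to the labelling or to small cancellation per se.
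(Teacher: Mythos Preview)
The paper does not prove this theorem; it simply cites \cite[Proposition 2.7]{Str}. So there is no ``paper's own proof'' to compare against, and your attempt is an independent argument.

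There is a genuine gap in your inductive step. You write that for a boundary face $f_0$ with $i(f_0)\le 3$ one has $b(f_0)\ge 7-i(f_0)$, deducing this from ``every interior face has at least $7$ edges''. But $f_0$ is a \emph{boundary} face, and under the paper's definition of a $(3,7)$-diagram only \emph{interior} faces are required to have at least $7$ edges. Nothing forbids a boundary face from being, say, a square with $b(f_0)=1$ and $i(f_0)=3$. Removing such a face gives $|\partial D'|=|\partial D|-1+3=|\partial D|+2$, so the boundary can grow and the induction collapses: from $|D'|\le 8|\partial D'|$ you would only get $|D|\le 8|\partial D|+17$. More generally, your computation requires $b(f_0)>i(f_0)$, and Curvature formula~I alone does not supply this for a single face.

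This is not merely the ``bookkeeping'' issue you flag about disconnection and cut-vertices; it is a structural defect in the one-face-at-a-time scheme. Strebel's actual argument is organised differently: one removes an entire outer \emph{layer} of boundary faces at once and controls simultaneously the number of faces removed and the drop in boundary length, so that the contributions of ``bad'' boundary faces (many interior edges, few boundary edges) are compensated by the ``good'' ones guaranteed by the curvature formula. Your alternative discharging suggestion is closer in spirit to a workable proof, but as stated it is only a gesture, not an argument; you would still need to specify the weights and verify that interior faces (which do have $\ge 7$ edges) absorb enough charge to leave each face with at most a constant multiple of its boundary contribution.
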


\subsection{Basic tools and results}\label{subsection:p3}

We will now explain how to apply the aforementioned results from classical small cancellation theory to graphical small cancellation presentations. The following definition is given in \cite{Oll}.

\begin{defi}[To originate from $\Gamma$] Let $\Gamma$ be a $C(2)$-labelled graph, and let $f$ be a face in a diagram $D$ over $\Gamma$. Choose a basepoint and an orientation for $\partial f$. Then $\partial f$ maps (we say \emph{lifts}) to a cycle in $\Gamma$, and since $C(2)$ is satisfied, this cycle is unique. Let $p$ be a path in an interior arc of $D$ that lies in the intersection of faces $f_1$ and $f_2$. Then we can choose basepoints and orientations for $\partial f_1$ and $\partial f_2$, such that $p$ is an initial subpath of $\partial f_1$ and $\partial f_2$. If the images of $p$ via the lifts of $\partial f_1$ and $\partial f_2$ to $\Gamma$ coincide, we say $p$ \emph{originates from} $\Gamma$.
\end{defi}

Observe that any interior arc of a diagram that does not originate from $\Gamma$ is a piece.

The above definition can be extended to $Gr(2)$-labelled graphs. In that case, lifts are considered up to composition with an automorphism of $\Gamma$. Any interior arc of a diagram that does not essentially originate from $\Gamma$ is an essential piece. As stated before, all proofs and conclusions of Subsections \ref{subsection:p1}-\ref{subsection:p3} for $C(n)$- (respectively $C'(n)$-) labelled graphs are true for $Gr(n)$- (respectively $Gr'(\lambda)$-) labelled graphs as well. We will not state them doubly to avoid confusing notation.

\vspace{8pt}

The following lemma is our main tool in extending classical small cancellation theory to graphical small cancellation theory. It is based on \cite{Oll}, in which the $C'(\frac{1}{6})$ condition is investigated.

\begin{lem}\label{lem:graphical_basic} Let $D$ be a singular disk diagram over a $C(n)$-labelled graph $\Gamma$, where $n\geq 6$. Then one of the following holds:
\begin{itemize}
 \item Removing all edges of $D$ originating from $\Gamma$ and merging the corresponding 2-cells yields a $(3,n)$-diagram $D'$ over $\Gamma$, all faces of which are simply connected.
 \item $D$ has a subdiagram that is a simple disk diagram with at least one face, with freely trivial boundary word, all interior edges of which originate from $\Gamma$ and no boundary edge of which is an interior edge of $D$ that originates from $\Gamma$.
\end{itemize}
\end{lem}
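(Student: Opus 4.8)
The plan is to analyze a minimal (or at least area-minimal) singular disk diagram $D$ over $\Gamma$ by focusing on which interior arcs originate from $\Gamma$ and which do not. Recall that an interior arc that does not originate from $\Gamma$ is a piece; the idea is that if we delete all edges that \emph{do} originate from $\Gamma$ and merge the resulting pairs of adjacent faces, the boundary path of each merged face becomes a concatenation of pieces, so the $C(n)$ condition forces each such boundary to have at least $n$ arcs. First I would set up the merging operation carefully: removing an interior edge $e$ that originates from $\Gamma$ and lies between faces $f_1$ and $f_2$ identifies $f_1$ and $f_2$ into a single 2-cell whose boundary label still reads a closed path in $\Gamma$ (because the lifts of $\partial f_1$ and $\partial f_2$ agree along $e$, they glue to give a genuine — possibly nontrivial — closed path in $\Gamma$). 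Performing this for all such edges simultaneously yields a new 2-complex $D'$; the subtlety is that $D'$ need not be a disk diagram in the naive sense — a merged face could fail to be simply connected, i.e. its boundary cycle in $\Gamma$ could be $0$-homotopic and bound a disk's worth of structure, and this is precisely the escape hatch recorded in the second bullet of the statement.

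The key steps, in order: (1) Define $D'$ as above and observe that every interior edge of $D'$ is a piece and every interior vertex has degree $\geq 3$ — for the degree condition one uses that $D$ can be taken to have no spurs and no vertices of degree $2$ after forgetting, so after merging the vertices surviving in the interior still have degree $\geq 3$ (a vertex of degree $2$ in $D'$ would be forgotten). (2) Show that if every face of $D'$ is simply connected, then the boundary of every interior face of $D'$ is a \emph{nontrivial} closed path in $\Gamma$, written as a concatenation of interior arcs each of which is a piece; by the $C(n)$ condition (in the equivalent form: no nontrivial cycle is a concatenation of fewer than $n$ pieces) this boundary consists of at least $n$ arcs, so $D'$ is a $(3,n)$-diagram over $\Gamma$, giving the first alternative. (3) If some merged face $f$ of $D'$ is \emph{not} simply connected, then inside the union of the cells of $D$ that were merged into $f$ there is a subdiagram bounded by a closed path $p$ that is $0$-homotopic in $\Gamma$ — hence has freely trivial boundary word — all of whose interior edges originate from $\Gamma$; by shrinking to an innermost such configuration one arranges that this subdiagram is a simple disk diagram, has at least one face, and that no boundary edge of it is an interior edge of $D$ that originates from $\Gamma$. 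This is exactly the second alternative.

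The main obstacle I expect is step (3): making precise the sense in which a non-simply-connected merged face yields an honest \emph{simple} disk subdiagram of $D$ with all the listed properties. One has to argue that the "hole" in the merged face is itself filled by cells of $D$ (since $D$ is simply connected in the plane), isolate a topological disk among these cells whose boundary returns to a single vertex along edges that all originate from $\Gamma$ and whose label therefore lifts to a closed $0$-homotopic path in $\Gamma$, and then pass to an innermost choice to guarantee simplicity (no cut-points) and the condition that none of its boundary edges is simultaneously an interior originating edge of $D$ — the latter being needed so that this subdiagram can later be excised or treated separately without disturbing the originating structure elsewhere. A secondary technical point is verifying the degree-$\geq 3$ condition for interior vertices of $D'$: one must check that merging faces does not create interior vertices of degree $2$ that were "protected" in $D$, which comes down to the minimality/no-backtracking hypotheses on $D$ together with the standard move of forgetting degree-$2$ vertices, under which labels and orientations are, as the paper notes, irrelevant.
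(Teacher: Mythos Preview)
Your overall plan---merge faces along edges originating from $\Gamma$, then argue that the resulting complex $D'$ is a $(3,n)$-diagram---is the right one, and matches the paper. But the dichotomy you set up is misplaced, and step~(3) contains a genuine error.

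The second alternative in the lemma is \emph{not} produced by a non-simply-connected merged face. It is produced by a merged face of $D'$ whose boundary word is \emph{freely trivial} (equivalently, whose lift to $\Gamma$ is $0$-homotopic): the preimage in $D$ of such a face is exactly a simple disk subdiagram with freely trivial boundary, all of whose interior edges originate from $\Gamma$ and none of whose boundary edges do. You need this case anyway, because your step~(2) silently assumes every interior face of $D'$ has \emph{nontrivial} boundary in $\Gamma$---otherwise the $C(n)$ condition gives you nothing---and you never justify that assumption. So the correct split is ``every face of $D'$ has nontrivial boundary'' versus ``some face has trivial boundary (hence the second alternative holds)''.

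What remains is to show that, assuming the second alternative fails, $D'$ really is a diagram: no faces with holes and no non-simply-connected faces. Your step~(3) tries to extract the second alternative directly from such a configuration, but the claim is wrong. If a merged face $f$ encloses a subdiagram $\Delta$, the boundary of $\Delta$ consists of edges of $D'$ (pieces), and there is no reason its label should be freely trivial or $0$-homotopic in $\Gamma$; nor does $\Delta$ sit ``inside the union of cells merged into $f$''---it is the complementary region. The paper handles this case by a curvature argument you are missing: choosing an innermost such $f$, the enclosed $\Delta$ has at most one boundary vertex of degree $<3$, and (since the second alternative fails) all of its faces carry nontrivial cycles, hence have $\geq 6$ arcs; forgetting degree-$2$ vertices makes $\Delta$ a $[3,6]$-diagram, and the second curvature formula (Lyndon) then gives a contradiction. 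This use of the curvature formula is the essential missing ingredient in your proposal.
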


\begin{proof} Note that removing an arc $a$ from a diagram $D$ may yield a resulting space $D'$ that is not a diagram. This can be the case if $a$ is the image of two distinct, non-consecutive subpaths of $\partial f$ for a single face $f$. In this case, after removing the arc $a$, $f$ becomes the image of a closed annulus. If multiple arcs are removed, this can happen multiple times. Then $f$ becomes the image of a closed disk with holes, i.e. the image of a space $b'=\overline b\setminus \sqcup_{i=1}^nb_i$, where $b=D^2\subset \R^2$, $n\geq 1$ and each $b_i$ is an open disk of radius $\epsilon$ contained in $b$ such that $\overline b_i\cap \partial b=\emptyset$ and for $i\neq j: \overline b_i\cap \overline b_j=\emptyset$.  We call such a space $f$ a \emph{face with holes}.

Let $D$ be as in the statement and assume the second claim does not hold. Let $D'$ be obtained from $D$ by deleting all edges originating from $\Gamma$ and merging the corresponding faces. We will show that there are no faces with holes, and hence $D'$ is a diagram. Moreover, there may be faces in $D'$ that are not simply connected. We rule out this case as well. (See Figure \ref{figure:holey} for illustrations.)

There are no spurs in $D'$ whose endpoints lie in the interior of $D'$, as such a spur would lie entirely in one face and, by the reducedness of the labelling, consist of edges originating from $\Gamma$. Assume there is a face with holes or a non-simply connected face $f$. By choosing $f$ to be innermost, we can assume that there is a subdiagram $\Delta$ which is enclosed by $f$ (i.e. the interior of $\Delta$ lies in the union of bounded connected components of $\R^2\setminus f$), such that $\Delta$ is in fact a diagram, $\Delta$ has at least one face, all faces of $\Delta$ are simply connected and $\Delta$ has no spur.

Since $\Delta$ is enclosed by $f$, $\Delta$ has at most one vertex (say $v$) that meets an edge that is not in $\Delta$. By assumption, all faces of $\Delta$ bear the labels of nontrivial cycles in $\Gamma$. Note that all interior arcs of $D'$ are pieces. Now we can forget vertices of degree two in $D'$. This turns the subdiagram $\Delta$ into a $[3,6]$-diagram, due to our small cancellation assumption. There is at most one vertex in $\Delta$ (namely $v$) that has degree (in $\Delta$) less than $3$. This contradicts the second curvature formula for $\Delta$.

Hence $D'$ is a diagram and all faces are simply connected. Since the second claim does not hold, every face bears a freely nontrivial boundary word, i.e. a word read on a nontrivial cycle of $\Gamma$. The small cancellation assumption and the observation that interior arcs are pieces yield that $D'$ is a $(3,n)$-diagram.
\end{proof}

\begin{figure}
\begin{tikzpicture}
 \draw (0,0) circle (1);
 \draw (0,0) circle (2);
 \draw[dashed,shorten <=1pt, shorten >=1pt] (1,0) -- (2,0);

 \draw (6.1,0) circle (1);
 \draw (6.1,0) circle (2);
 \draw (7.1,0) -- (8.1,0);

 \node at (0,0) {\small $\Delta$};
 \node at (-1.5,0) {\small $f$};

 \node at (6.1,0) {\small $\Delta$};
 \node at (4.6,0) {\small $f$};

\end{tikzpicture}
\caption{On the left: A face $f$ with one hole enclosing the subdiagram $\Delta$. The dashed line represents an arc in the original diagram $D$ that was removed when passing to $D'$. On the right: A non-simply connected face $f$ enclosing the subdiagram $\Delta$.}
\label{figure:holey}
\end{figure}
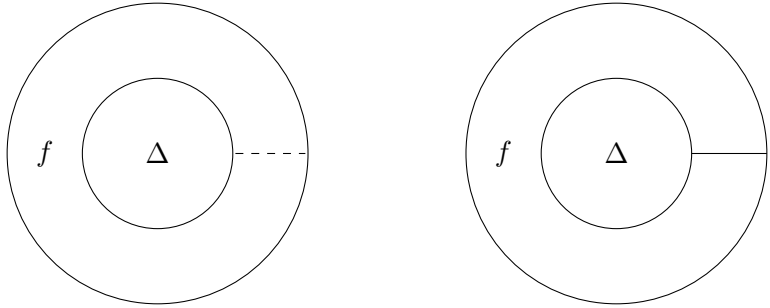

Suppose in a singular disk diagram $D$, there is a simply connected face $f$ that has freely trivial boundary word and such that there are no spurs whose endpoints lie inside $f$. Then we can fold the boundary of $f$, thus removing $f$ without altering anything else in the diagram. This is done as follows:

Since the boundary cycle $\partial f$ embeds into $D$, any two consecutive edges with inverse labels in $\partial f$ can folded together (i.e. identified) in $D$ such that the resulting edge only intersects the resulting face $f'$ in a vertex. If $f$ had only two boundary edges, we let $f'$ be a vertex. The recursive application of this folding turns $f$ into a labelled tree, which, considered as a diagram, has the same boundary word as $f$. See Figure \ref{figure:fold} for an example.

\begin{figure}[ht]
\begin{tikzpicture}[>=stealth,shorten <=2.5pt, shorten >=2.5pt]
\coordinate [label=above:$f$] (A) at (0,-.3);
\node[coordinate] at (0  :1.5) (A0) {};
\node[coordinate] at (45 :1.5) (A1) {};
\node[coordinate] at (90 :1.5) (A2) {};
\node[coordinate] at (135:1.5) (A3) {};
\node[coordinate] at (180:1.5) (A4) {};
\node[coordinate] at (225:1.5) (A5) {};
\node[coordinate] at (270:1.5) (A6) {};
\node[coordinate] at (315:1.5) (A7) {};
\fill (A1) circle (2pt);\fill (A2) circle (2pt);\fill (A3) circle (2pt);
\fill (A4) circle (2pt);\fill (A5) circle (2pt);\fill (A6) circle (2pt);
\fill (A7) circle (2pt);\fill (A0) circle (2pt);

\draw[->] (A1) to node [auto] {\small $a$} (A0);
\draw[->] (A2) to node [auto] {\small $b$} (A1);
\draw[->] (A2) to node [auto,swap] {\small $c$} (A3);
\draw[->] (A4) to node [auto] {\small $c$} (A3);
\draw[->] (A5) to node [auto] {\small $d$} (A4);
\draw[->] (A5) to node [auto,swap] {\small $d$} (A6);
\draw[->] (A6) to node [auto,swap] {\small $b$} (A7);
\draw[->] (A7) to node [auto,swap] {\small $a$} (A0);

\node[coordinate] at (0  :6) (N) {};
\node[coordinate] at ($(N)+(0  :1.14805)$) (B01) {};
\node[coordinate] at ($(N)+(0  :2.2961 )$) (B02) {};
\node[coordinate] at ($(N)+(135:1.14805)$) (B3) {};
\node[coordinate] at ($(N)+(225:1.14805)$) (B5) {};

\fill (B3) circle (2pt);
\fill (B5) circle (2pt);
\fill (B01) circle (2pt);
\fill (B02) circle (2pt);
\fill (N) circle (2pt);

\draw[->] (N) to node [auto] {\small $b$} (B01);
\draw[->] (B01) to node [auto] {\small $a$} (B02);
\draw[->] (N) to node [auto,swap] {\small $c$} (B3);
\draw[->] (B5) to node [auto,swap] {\small $d$} (N);
\end{tikzpicture}
\caption{Folding a face $f$ with freely trivial boundary word.}
\label{figure:fold}
\end{figure}
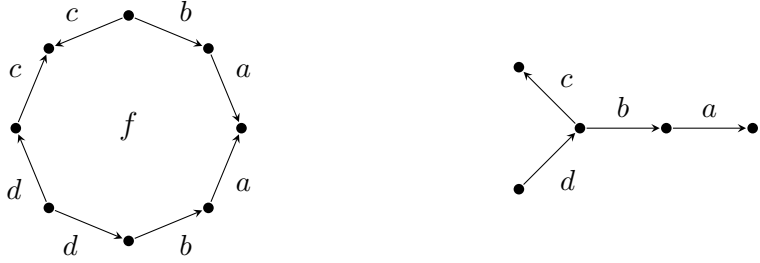

After folding away $f$, the resulting diagram has smaller area than the original one. This observation also applies to any subdiagram $\Delta$ of $D$ with at least one face and with freely trivial boundary word, since we can simply replace $\Delta$ by a face $f$ with the same boundary word and then fold away $f$. Hence removing all edges originating from $\Gamma$ on a diagram $D$ of minimal area for a word $w$ yields a diagram as in the first case of Lemma \ref{lem:graphical_basic}. We therefore obtain:

\begin{cor}\label{cor:graphical_basic} Let $w\in M(S)$ satisfying $w=1$ in $G(\Gamma)$, and let $D$ be a minimal diagram for $w$ over $\Gamma$. Then no interior edge of $D$ originates from $\Gamma$, and therefore, $D$ is a $(3,n)$-diagram, all faces of which are simply connected.
\end{cor}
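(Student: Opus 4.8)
The plan is to derive Corollary \ref{cor:graphical_basic} directly from Lemma \ref{lem:graphical_basic} together with the folding argument sketched in the paragraphs immediately preceding the corollary. Let $w\in M(S)$ with $w=1$ in $G(\Gamma)$, and let $D$ be a minimal diagram for $w$ over $\Gamma$, which exists by the van Kampen Lemma (and the definition of minimal diagram). Apply Lemma \ref{lem:graphical_basic} to $D$, noting that $n\geq 6$ holds by hypothesis on $\Gamma$. One of the two alternatives of the lemma must occur; the strategy is to rule out the second alternative using minimality, so that the first alternative holds, which is exactly the statement of the corollary.

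First I would deal with the second alternative. Suppose $D$ contains a subdiagram $\Delta$ that is a simple disk diagram with at least one face, whose boundary word is freely trivial, all of whose interior edges originate from $\Gamma$, and no boundary edge of which is an interior edge of $D$ originating from $\Gamma$. The key point is that $\Delta$ can be excised and replaced by a single face $f$ with the same (freely trivial) boundary word without changing the rest of $D$ — this is legitimate because no boundary edge of $\Delta$ is an interior edge of $D$ originating from $\Gamma$, so the surgery does not interfere with the ambient structure. Since $\partial f$ has freely trivial boundary word and $f$ embeds in the new diagram with no spurs ending inside it (the boundary cycle $\partial f$ is the boundary path of $\Delta$, which is a simple disk diagram), we may apply the folding procedure described just before the corollary: recursively fold together consecutive edges of $\partial f$ carrying inverse labels, turning $f$ into a labelled tree with the same boundary word. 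Folding away $f$ strictly decreases the number of 2-cells, hence the area, of the diagram, while the boundary label $w$ is unchanged. This contradicts the minimality of $D$. Therefore the second alternative of Lemma \ref{lem:graphical_basic} cannot hold for a minimal diagram.

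Hence the first alternative holds: removing from $D$ all interior edges originating from $\Gamma$ and merging the corresponding 2-cells yields a $(3,n)$-diagram $D'$ over $\Gamma$, all of whose faces are simply connected. It remains to argue that in fact no interior edge of $D$ originates from $\Gamma$ in the first place, i.e.\ that $D'=D$. If some interior edge of $D$ did originate from $\Gamma$, then $D'$ would have strictly fewer edges than $D$. But $D'$ is itself a diagram for $w$ over $\Gamma$ (its boundary label is still $w$, since only interior edges were removed, and each merged face still carries the label of a nontrivial cycle of $\Gamma$), with no more 2-cells than $D$ — indeed with strictly fewer if any merging occurred, or the same number if edges were deleted without merging, but in any case with strictly fewer edges. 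Either way this contradicts minimality of $D$, which requires minimal area and then minimal number of edges among diagrams of minimal area. So $D'=D$, meaning no interior edge of $D$ originates from $\Gamma$, and $D$ itself is a $(3,n)$-diagram all of whose faces are simply connected.

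The main obstacle I anticipate is the bookkeeping in the second step: one has to verify carefully that the excision-and-folding surgery is actually a legitimate operation on singular disk diagrams — that after replacing $\Delta$ by $f$ and folding, one still has a genuine singular disk diagram with the same boundary label, and in particular that the hypothesis ``no boundary edge of $\Delta$ is an interior edge of $D$ originating from $\Gamma$'' is precisely what prevents the surgery from colliding with edges that are supposed to survive into $D'$. The folding itself must be justified as in the text, namely that because $\partial f$ embeds in the diagram, consecutive inverse-labelled edges can be identified so that the resulting edge meets the resulting face only in a vertex; iterating collapses $f$ to a tree, which can then be absorbed. Once this surgery is seen to strictly reduce area while preserving $w$, the contradiction with minimality is immediate, and the rest of the argument is the routine minimality/edge-count comparison in the final paragraph.
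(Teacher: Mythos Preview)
Your proposal is correct and follows essentially the same approach as the paper: rule out the second alternative of Lemma~\ref{lem:graphical_basic} via the folding surgery (which strictly reduces area), then use the two-tier minimality of $D$ (minimal area, then minimal edge count) to conclude that the diagram $D'$ produced by the first alternative must coincide with $D$. Your write-up is in fact more explicit than the paper's, which compresses the whole argument into the paragraph preceding the corollary; your anticipated ``obstacle'' about the legitimacy of the excision-and-folding surgery is exactly the content of that paragraph and Figure~\ref{figure:fold}.
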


The following lemma is proved just as Lemma \ref{lem:graphical_basic} with the additional observation that there is no spherical $[3,6]$-diagram with more than one face. This follows from the second curvature formula.

\begin{lem}\label{lem:graphical_basic_spherical} Let $D$ be a simple spherical diagram over a $C(6)$-labelled graph $\Gamma$. Then one of the following holds:
\begin{itemize}
 \item All edges of $D$ originate from $\Gamma$.
 \item $D$ has a subdiagram that is a simple disk diagram with at least one face, with freely trivial boundary word, all interior edges of which originate from $\Gamma$ and no boundary edge of which originates from $\Gamma$.
\end{itemize}
\end{lem}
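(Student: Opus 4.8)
\emph{Proof proposal.} The plan is to transcribe the proof of Lemma~\ref{lem:graphical_basic} almost verbatim, the only genuinely new input being that there is no spherical $[3,6]$-diagram with more than one face. This last fact follows from the second curvature formula: if $E$ were such a diagram, then deleting the open $2$-cell of one of its faces $f_0$ leaves a $[3,6]$-singular disk diagram $D_0$ with $\partial D_0=\partial f_0$; since $E$ is spherical, all its vertices are interior, so every vertex of $\partial D_0$ has degree at least $3$ and $\sum_{v\in\partial D_0}(2+\frac12-d(v))<0$, contradicting the second curvature formula.

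Assume now that the second claim of the lemma fails; I claim that then every edge of $D$ originates from $\Gamma$. If not, form $D'$ from $D$ by deleting every edge originating from $\Gamma$ and merging the corresponding $2$-cells, so that $D'$ still has at least one edge. Exactly as in the proof of Lemma~\ref{lem:graphical_basic} --- via the ``innermost face'' argument, the second curvature formula applied to the disk subdiagram enclosed by a hypothetical face with holes or non-simply-connected face, and the reducedness of the labelling to rule out spurs whose endpoints lie inside a face --- one shows that $D'$ has no faces with holes and no non-simply-connected faces, so $D'$ is again a spherical diagram over $\Gamma$ all of whose faces are simply connected. As there, the failure of the second claim forces every face of $D'$ to carry a freely nontrivial boundary word: a face with freely trivial boundary word would, after restoring the faces of $D$ that were merged into it (whose mutual edges all originate from $\Gamma$) and passing to a simple subdiagram, exhibit precisely the configuration we have assumed does not occur.

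It remains to reach a contradiction. By $C(6)$ no nontrivial cycle of $\Gamma$ is a concatenation of fewer than six pieces; since every interior arc of $D'$ is a piece and every face of $D'$ reads a nontrivial cycle of $\Gamma$, forgetting the vertices of degree two turns $D'$ into a $(3,6)$-diagram (it has no spurs, hence no vertices of degree less than $3$), and hence, being spherical, into a $[3,6]$-diagram. Moreover $D'$ has more than one face: it has at least one edge and tessellates the $2$-sphere, so if it had only one face its connected $1$-skeleton would be a tree and that face would read a freely trivial word, contradicting the previous paragraph. Thus $D'$ is a spherical $[3,6]$-diagram with more than one face, which is impossible. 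Hence every edge of $D$ originates from $\Gamma$, as claimed.

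The step I expect to require the most care is the one inherited from Lemma~\ref{lem:graphical_basic}: upgrading a face of $D'$ with freely trivial boundary word to an honest \emph{simple} disk subdiagram of $D$ as demanded by the second claim, i.e.\ cutting along cut vertices while keeping the boundary word freely trivial, using that such a face meets itself only along ``slit'' edges coming from backtracks of its reduced-label boundary walk. Everything else is a routine copy of the disk-diagram case.
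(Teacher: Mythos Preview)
Your proposal is correct and follows essentially the same approach as the paper: the paper's proof consists of a single sentence stating that the argument is ``just as Lemma~\ref{lem:graphical_basic} with the additional observation that there is no spherical $[3,6]$-diagram with more than one face,'' deduced from the second curvature formula, and you have simply written this out in full.
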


The set of all words read on nontrivial cycles of a labelled graph $\Gamma$ is generally infinite. The set of all words read on all \emph{simple} cycles of $\Gamma$ generates the same normal subgroup of $F(S)$. If $\Gamma$ has finitely generated fundamental group, this set is finite. The following lemma improves the conclusions of Lemma \ref{lem:graphical_basic} to show, that considering the presentation given by the words read on all simple cycles is sufficient to obtain $(3,n)$-diagrams from $C(n)$ labelled graphs if $n\geq 6$.

\begin{lem}\label{lem:graphical_simple} Let $\Gamma$ be a $C(n)$ labelled graph for $n\geq 6$. Let $w\in M(S)$ satisfying $w=1$ in $G(\Gamma)$. Then there exists a diagram $D$ for $w$ such that no interior edge of $D$ originates from $\Gamma$ and such that every face bears the label of a simple cycle in $\Gamma$.
\end{lem}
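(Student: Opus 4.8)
The plan is to start from the diagram $D$ produced by Corollary \ref{cor:graphical_basic}: a minimal diagram for $w$ over $\Gamma$, which is a $(3,n)$-diagram with all faces simply connected and no interior edge originating from $\Gamma$. The only thing that may fail is that some face $f$ of $D$ bears the label of a \emph{nontrivial but non-simple} cycle of $\Gamma$; I want to replace such faces by unions of faces labelled by simple cycles, without destroying the structural properties. First I would observe that if $\gamma$ is the nontrivial cycle in $\Gamma$ to which $\partial f$ lifts and $\gamma$ is not simple, then $\gamma$ decomposes (as a cycle) into shorter closed subpaths $\gamma = \delta_1\delta_2\cdots\delta_k$ with $k \geq 2$, where each $\delta_j$ is obtained by splitting $\gamma$ at a repeated vertex; iterating, we may take each $\delta_j$ to be a simple cycle of $\Gamma$ (discarding any $0$-homotopic $\delta_j$, which are read off a simplicial path and whose word is freely trivial). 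Correspondingly there is a planar subdiagram $E_f$, living in a disk, whose faces carry the labels of the $\delta_j$ and whose boundary label equals the boundary label of $f$; this $E_f$ is built by gluing $\delta_j$-labelled cycle-faces along the matching interior segments coming from the repeated vertices of $\gamma$.

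Next I would substitute $E_f$ for $f$ in $D$, for every such non-simple face simultaneously, to obtain a diagram $\widetilde D$ for $w$ in which every face is labelled by a simple cycle of $\Gamma$. The point to check is that this substitution does not create an interior edge originating from $\Gamma$, since we then still want to run the reduction of Lemma \ref{lem:graphical_basic}. The newly created interior edges inside $E_f$ all originate from $\Gamma$ (they are the images of subpaths of the single cycle $\gamma$), so a priori $\widetilde D$ need not satisfy the conclusion directly. The remedy is the same as in the passage following Lemma \ref{lem:graphical_basic}: pass to a diagram $D'$ for $w$ whose area is minimal \emph{among all diagrams for $w$ over $\Gamma$ all of whose faces bear labels of simple cycles of $\Gamma$}. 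Such diagrams exist (e.g.\ $\widetilde D$), and by the folding argument a minimal one has no subdiagram with at least one face and freely trivial boundary word. Now reapply the proof of Lemma \ref{lem:graphical_basic} to $D'$: removing all edges originating from $\Gamma$ and merging faces cannot fall into the second alternative (that would give such a freely trivial subdiagram, contradicting minimality), so it yields a $(3,n)$-diagram over $\Gamma$. But the faces of $D'$ are already labelled by simple cycles, and a simple cycle, being embedded in $\Gamma$ and hence (by the $C(n)$, $n\geq 2$, reducedness) a reduced cycle, has no two consecutive inverse edges — so in fact no edge of $D'$ originates from $\Gamma$ along a boundary of a single face in a way that would get removed internally; more carefully, the merging step removes interior edges originating from $\Gamma$, and the resulting diagram $D'$ (after the merge we are forced to perform) continues to have all faces labelled by cycles of $\Gamma$ that are concatenations of the original simple cycles, so one argues as in Lemma \ref{lem:graphical_basic} that these merged faces must already have had trivial, hence empty, merging, i.e.\ $D'$ had no interior edge originating from $\Gamma$ to begin with. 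This is the delicate bookkeeping point.

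Concretely, I would phrase it as: let $D'$ be a diagram for $w$ over $\Gamma$, all of whose faces are labelled by simple cycles of $\Gamma$, of minimal area among all such diagrams, and among those with fewest edges. If some interior edge of $D'$ originated from $\Gamma$, applying the argument of Lemma \ref{lem:graphical_basic} (valid since $n\geq 6$) shows that either we can delete edges originating from $\Gamma$ to get a $(3,n)$-diagram — but then the merged faces bear labels of nontrivial cycles, which we re-split into simple cycles as above, strictly decreasing area or edge-count and contradicting minimality — or $D'$ contains a simple disk subdiagram with a face and freely trivial boundary word, which can be folded away, again contradicting minimality. Hence no interior edge of $D'$ originates from $\Gamma$, and $D'$ is the desired diagram. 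The main obstacle, as indicated, is making the re-splitting/merging interaction precise: one must ensure that re-splitting a non-simple face into simple-cycle faces and then re-minimizing genuinely terminates and stays inside the class of ``all faces labelled by simple cycles,'' so the two minimality reductions (splitting non-simple faces, and folding freely trivial subdiagrams) do not fight each other. Choosing the minimality class correctly — minimal area, then minimal edges, \emph{within} the class of simple-cycle-labelled diagrams — resolves this.
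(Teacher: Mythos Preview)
Your overall plan is sound, but there is a genuine gap in the minimality argument. You choose $D'$ to be of minimal \emph{area}, then minimal number of edges, among simple-cycle-faced diagrams, and then claim that merging the interior edges originating from $\Gamma$ and re-splitting the resulting faces into simple cycles ``strictly decreases area or edge-count and contradicts minimality.'' The edge count certainly drops (you removed at least one edge and pinching adds none), but the area can \emph{increase}: when two simple-cycle faces $f_1,f_2$ sharing an arc $a$ are merged, the resulting cycle $q_1q_2$ in $\Gamma$ may be highly non-simple---$q_1$ and $q_2$ can meet at many vertices of $\Gamma$ beyond the endpoints of $a$---so that decomposing it into simple cycles yields more than two faces. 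Under the lexicographic order (area, then edges), a diagram with more faces but fewer edges does not contradict minimality of $D'$, so the argument breaks.

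The fix is easy: order by number of \emph{edges} first. Then merging strictly decreases edges, re-splitting by pinching leaves the edge count unchanged, and folding any freely trivial pinched-off faces only decreases it further; you stay in the class of simple-cycle-faced diagrams and get an immediate contradiction. The paper's proof is in fact organized exactly this way but with one further simplification: it minimizes (edges, then vertices) over \emph{all} diagrams for $w$ over $\Gamma$, not just simple-cycle-faced ones. Minimality of edges already forces (via Lemma~\ref{lem:graphical_basic}) that no interior edge originates from $\Gamma$, so all faces are simply connected; then if some face $f$ lifts to a non-simple cycle, one simply \emph{pinches} two boundary vertices of $f$ that lift to the same vertex of $\Gamma$. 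This does not change the edge count, and either a freely trivial face appears (fold it, contradicting edge-minimality) or the vertex count drops (contradicting vertex-minimality). This avoids the detour through building $\widetilde D$ and re-minimizing inside a restricted class.
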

Note that by Lemma \ref{lem:graphical_basic}, $D$ is a $(3,n)$ diagram, all faces of which are simply connected.

\begin{proof}[Proof of the lemma] Let $D$ be a diagram for $w$ over $\Gamma$ with a minimal number of edges whose number of vertices is minimal among all diagrams for $w$ over $\Gamma$ with a minimal number of edges. 

Suppose $D$ has a subdiagram $\Delta$ as in the second case of Lemma \ref{lem:graphical_basic}. Then we can remove the interior edges of $\Delta$ and fold its boundary thus removing $\Delta$, leaving a diagram over $\Gamma$. This diagram has fewer edges than $D$ while having the same boundary word, which contradicts the assumptions.

Thus we are in the first case of Lemma \ref{lem:graphical_basic}. Since deleting all interior edges originating from $\Gamma$ gives a diagram over $\Gamma$, the minimality assumptions yield that no interior edge originates from $\Gamma$.  Hence there is no spur whose endpoint lies in the interior of $D$, and, by Lemma \ref{lem:graphical_basic}, every face is simply connected.

Suppose $D$ has a face $f$ bearing a boundary word that cannot be read on a simple cycle. Since $f$ is simply connected and since there are no spurs inside $f$, $\partial f$ embeds into $D$. Thus there are two distinct vertices $v_1$ and $v_2$ in $\partial f$ mapping to the same vertex in $\Gamma$ via the lift of $\partial f$ to $\Gamma$. We can identify $v_1$ and $v_2$, i.e.\ ``pinch" $v_1$ and $v_2$ together to a single vertex (cf. Figure \ref{figure:pinching}). 

If faces with freely trivial boundary words arise, they can be folded and removed to leave a diagram over $\Gamma$. This contradicts the minimality of number of edges of $D$. If no face with freely trivial boundary arises, the diagram obtained by pinching vertices is a diagram over $\Gamma$ with the same number of edges as $D$, but with fewer vertices, again contradicting the assumptions.
\end{proof}

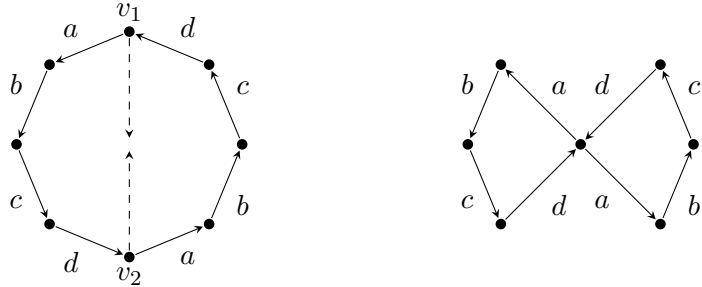
\begin{figure}[ht]
\begin{tikzpicture}[>=stealth,shorten <=2.5pt, shorten >=2.5pt]
\coordinate (A0) at (0  :1.5) {};
\coordinate (A1) at (45 :1.5) {};
\coordinate [label=above:\small $v_1$] (A2) at (90 :1.5) {};
\coordinate (A3) at (135:1.5) {};
\coordinate (A4) at (180:1.5) {};
\coordinate (A5) at (225:1.5) {};
\coordinate [label=below:\small $v_2$] (A6) at (270:1.5) {};
\coordinate (A7) at (315:1.5) {};

\fill (A1) circle (2pt);\fill (A2) circle (2pt);\fill (A3) circle (2pt);
\fill (A4) circle (2pt);\fill (A5) circle (2pt);\fill (A6) circle (2pt);
\fill (A7) circle (2pt);\fill (A0) circle (2pt);

\draw[->] (A0) to node [auto,swap] {\small $c$} (A1);
\draw[->] (A1) to node [auto,swap] {\small $d$} (A2);
\draw[->] (A2) to node [auto,swap] {\small $a$} (A3);
\draw[->] (A3) to node [auto,swap] {\small $b$} (A4);
\draw[->] (A4) to node [auto,swap] {\small $c$} (A5);
\draw[->] (A5) to node [auto,swap] {\small $d$} (A6);
\draw[->] (A6) to node [auto,swap] {\small $a$} (A7);
\draw[->] (A7) to node [auto,swap] {\small $b$} (A0);
\draw[->,dashed] (A2) to (0,0);
\draw[->,dashed] (A6) to (0,0);

\coordinate (N) at (0:6) {};
\coordinate (B0) at ($(N)+(A0)$) {};
\coordinate (B1) at ($(N)+(A1)$) {};
\coordinate (B3) at ($(N)+(A3)$) {};
\coordinate (B4) at ($(N)+(A4)$) {};
\coordinate (B5) at ($(N)+(A5)$) {};
\coordinate (B7) at ($(N)+(A7)$) {};

\fill (B0) circle (2pt);
\fill (B1) circle (2pt);
\fill (B3) circle (2pt);
\fill (B4) circle (2pt);
\fill (B5) circle (2pt);
\fill (B7) circle (2pt);
\fill (N) circle (2pt);

\draw[->] (N) to node [auto,swap] {\small $a$} (B7);
\draw[->] (B7) to node [auto,swap] {\small $b$} (B0);
\draw[->] (B0) to node [auto,swap] {\small $c$} (B1);
\draw[->] (B1) to node [auto,swap] {\small $d$} (N);
\draw[->] (N) to node [auto,swap] {\small $a$} (B3);
\draw[->] (B3) to node [auto,swap] {\small $b$} (B4);
\draw[->] (B4) to node [auto,swap] {\small $c$} (B5);
\draw[->] (B5) to node [auto,swap] {\small $d$} (N);

\end{tikzpicture}
\caption{Pinching together two vertices lying on a face that has a boundary path not lifting to a simple cycle.}
\label{figure:pinching}
\end{figure}

\begin{defi}[Gromov-hyperbolic group]\label{defi:hyperbolic_group}
Let $X$ be a geodesic metric space. A \emph{geodesic triangle} in $X$ is a triple of geodesics $(g_1,g_2,g_3)$ such that the endpoint $\tau(g_1)$ of $g_1$ equals the starting point $\iota(g_2)$ of $g_2$, $\tau(g_2)=\iota(g_3)$ and $\tau(g_3)=\iota(g_1)$. Let $\delta\geq 0$. Then $X$ is called \emph{$\delta$-hyperbolic} if and only if for all geodesic triangles $(g_1,g_2,g_3)$, $g_3$ is contained in the $\delta$-neighborhood of $g_1\cup g_2$. The space $X$ is called \emph{hyperbolic} if it is $\delta$-hyperbolic for some $\delta\geq0$.

Let $G$ be a group generated by a finite set $S$. The Cayley graph $\Cay(G,S)$ of $G$ with respect to $S$ is a connected graph endowed with the graph metric on its vertex set. If we consider each edge of $\Cay(G,S)$ isometric to the unit interval $[0,1]$, $\Cay(G,S)$ becomes a geodesic metric space. We say $G$ is $\delta$-hyperbolic with respect to $S$ if $\Cay(G,S)$ is $\delta$-hyperbolic. The group $G$ is called \emph{Gromov-hyperbolic} if it is $\delta$-hyperbolic with respect to $S$ for some $\delta\geq 0$ and some finite generating set $S$.
\end{defi}

\begin{defi}[Isoperimetric inequality] Let $G$ be a group given by the presentation $\langle S|R\rangle$. For $w\in M(S)$ satisfying $w=1$ in $G$, let $\operatorname{Area}_R(w)$ denote the minimal area of a singular disk diagram for $w$ over $R$. 
The \emph{Dehn function} of $G$ with respect to $\langle S|R\rangle$ is the map $f:\N\to \N$ given by 
$$f(l)=\max\{\operatorname{Area}_R(w)| |w|=l\}.$$
We say a group presentation satisfies a \emph{linear} (resp. \emph{quadratic}) \emph{isoperimetric inequality} if the corresponding Dehn function is bounded from above by a linear (resp. quadratic) map $\R\to\R$.
\end{defi}

We state two immediate consequences of Lemma \ref{lem:graphical_simple} using the area theorems. For classical $C(6)$ and $C(7)$ small cancellation presentations, these are well-known. It is shown in \cite[Theorem 2.5 and Proposition 2.10]{Al} that a group is Gromov-hyperbolic if and only if it has a finite presentation satisfying a linear isoperimetric inequality. 

We choose to state the theorems for the $Gr(7)$ and $Gr(6)$ conditions (instead of the graphical $C(7)$ and $C(6)$ conditions) as they are more general.

\begin{thm}\label{thm:linear_isoperimetry}
 Let $\Gamma$ be a $Gr(7)$-labelled graph. Let $R$ be the set of words read on all simple cycles of $\Gamma$. Then the presentation $\langle S|R\rangle$ satisfies the linear isoperimetric inequality:
$$\operatorname{Area}_R(w)\leq 8|w|.$$
In particular, if $\Gamma$ is finite, then the group $G(\Gamma)$ is Gromov-hyperbolic.
\end{thm}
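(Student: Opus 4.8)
The plan is to produce, for any word representing the identity, a van Kampen diagram over $R$ that is a $(3,7)$-diagram, and then to read off the bound from Area theorem II. Fix $w\in M(S)$ with $w=1$ in $G(\Gamma)$; if $w$ is freely trivial there is nothing to prove, so assume otherwise. Since $7\ge 6$ and, by the remarks following Definition \ref{defi:graphical2}, every statement of Subsections \ref{subsection:p1}--\ref{subsection:p3} established for $C(n)$-labelled graphs holds equally for $Gr(n)$-labelled graphs, Lemma \ref{lem:graphical_simple} applies: there is a diagram $D$ for $w$ over $\Gamma$ in which no interior edge originates from $\Gamma$ and in which every face carries the label of a simple cycle of $\Gamma$. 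In particular $D$ is a diagram for $w$ over $R$, so $\Area_R(w)\le |D|$.

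Next I would check that $D$ meets the hypotheses of Area theorem II. By the note following Lemma \ref{lem:graphical_simple}, i.e.\ by Lemma \ref{lem:graphical_basic} with $n=7$, all faces of $D$ are simply connected and there is no spur with an endpoint in the interior of $D$; moreover, since no interior edge originates from $\Gamma$, every interior arc of $D$ is an essential piece. Forgetting the vertices of degree two therefore turns $D$ into a diagram in which every interior vertex has degree at least $3$ and the boundary path of each interior face is a nontrivial cycle of $\Gamma$ that decomposes as a concatenation of at least $7$ essential pieces, hence consists of at least $7$ edges. This is precisely a $(3,7)$-singular disk diagram (in the degenerate case $|D|\le 1$ the inequality to be proved is immediate, so one may assume $|D|\ge 2$).

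Applying Area theorem II now gives $|D|\le 8|\partial D|$. Forgetting degree-two vertices does not increase the number of boundary edges, and the boundary label of the original $D$ is $w$, so $|\partial D|\le |w|$; hence $\Area_R(w)\le |D|\le 8|w|$, as claimed. For the final assertion, if $\Gamma$ is finite then it has only finitely many simple cycles, so $R$ is finite and $\langle S\mid R\rangle$ is a finite presentation satisfying a linear isoperimetric inequality; by the characterization of Gromov-hyperbolic groups recalled before the statement (\cite[Theorem 2.5 and Proposition 2.10]{Al}), $G(\Gamma)$ is Gromov-hyperbolic.

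The only point requiring genuine care is the middle step: one must make sure that after forgetting vertices of degree two the former arcs bounding an interior face really are essential pieces, so that the $(3,7)$-condition holds — this is exactly where the $Gr(7)$ hypothesis together with Lemma \ref{lem:graphical_simple} enters — and one must separately dispose of the degenerate cases (freely trivial $w$, diagrams with at most one face). Beyond that, the statement is a direct combination of the machinery assembled in Subsection \ref{subsection:p3} with Area theorem II.
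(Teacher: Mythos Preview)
Your argument is correct and is exactly the approach the paper intends: the theorem is stated as an immediate consequence of Lemma~\ref{lem:graphical_simple} (in its $Gr(n)$ form) together with Area theorem~II, and you have simply spelled out that deduction. The only superfluous step is the separate treatment of $|D|\le 1$, since Area theorem~II carries no lower bound on the number of faces.
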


\begin{thm}\label{thm:quadratic_isoperimetry}
 Let $\Gamma$ be a $Gr(6)$-labelled graph. Let $R$ be the set of words read on all simple cycles of $\Gamma$. Then the presentation $\langle S|R\rangle$ satisfies the quadratic isoperimetric inequality:
$$\operatorname{Area}_R(w)\leq 3|w|^2.$$
\end{thm}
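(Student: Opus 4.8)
The plan is to deduce the quadratic isoperimetric inequality directly from Lemma~\ref{lem:graphical_simple} together with the first Area theorem (Area theorem I), in exact parallel to how Theorem~\ref{thm:linear_isoperimetry} follows from Area theorem II. First I would fix $w\in M(S)$ with $w=1$ in $G(\Gamma)$. If $w$ is freely trivial there is nothing to prove, so assume otherwise. By Lemma~\ref{lem:graphical_simple} (applied with $n=6$, which is legitimate since $Gr(6)$ is satisfied and all results of Subsections~\ref{subsection:p1}--\ref{subsection:p3} hold verbatim for the $Gr$-conditions with ``piece'' replaced by ``essential piece'') there is a diagram $D$ for $w$ over $\Gamma$ such that no interior edge originates from $\Gamma$ and every face bears the label of a simple cycle of $\Gamma$; moreover, by the note following that lemma, $D$ is a $(3,6)$-diagram all of whose faces are simply connected. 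In particular $D$ is a singular disk diagram over $R$ (every face carries the label of a simple cycle, hence a word in $R$), so $\operatorname{Area}_R(w)\le |D|$.

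Next I would invoke Area theorem I: since $D$ is a $(3,6)$-singular disk diagram, $|D|\le 3|\partial D|^2$. Finally, the boundary label of $D$ is $w$, so $|\partial D|\le |w|$ (the boundary path reads $w$, possibly after free reduction, so it has at most $|w|$ edges). Combining, $\operatorname{Area}_R(w)\le |D|\le 3|\partial D|^2\le 3|w|^2$, which is exactly the claimed inequality. Since $w$ was arbitrary with $w=1$ in $G(\Gamma)$, this bounds the Dehn function of $\langle S|R\rangle$ by $3l^2$, establishing the quadratic isoperimetric inequality.

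There is essentially no hard step here once Lemma~\ref{lem:graphical_simple} is in hand — the content has already been extracted into that lemma and into Area theorem I. The only small points requiring care are: (i) handling the degenerate cases (empty or freely trivial $w$, and the case where $D$ has fewer than two faces, for which the inequality $|D|\le 3|\partial D|^2$ is trivial anyway, or one simply notes Area theorem I as stated already covers $(3,6)$-diagrams); (ii) confirming that ``$D$ is a diagram over $R$'' follows because each face, bearing the label of a simple cycle, has a boundary label lying in $R$ after a suitable choice of basepoint and orientation; and (iii) the passage from $Gr(6)$ to the cited lemmas, which the text has already licensed. If anything is an obstacle it is purely bookkeeping: making sure the reader sees that $R$ here is precisely the relator set for which Area theorem I is being applied, and that the $(3,6)$ structure is exactly what that theorem requires.
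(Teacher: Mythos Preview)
Your proposal is correct and is exactly the argument the paper intends: the theorem is stated there as an ``immediate consequence'' of Lemma~\ref{lem:graphical_simple} together with Area theorem~I, which is precisely what you do. The only cosmetic point is that the bound $|\partial D|\le |w|$ is justified not by free reduction of the boundary word (the boundary label is literally $w$) but by the fact that viewing $D$ as a $(3,6)$-diagram involves forgetting degree-two vertices, which can only decrease the edge count; this does not affect the correctness of your argument.
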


\subsection{Asphericity}

For a group presentation $\langle S|R \rangle$, the \emph{presentation complex} is a CW-complex constructed as follows: The 0-skeleton is a single point $v$. For each generator in $S$, a labelled, oriented loop is glued to $v$, and for each relator $r$ in $R$, a 2-cell whose boundary label is $r$ is glued along its boundary label onto the 1-skeleton. A CW-complex is called \emph{aspherical} if its universal cover is contractible.

We show that a group given by a graphical $C(6)$ presentation has an aspherical presentation complex. This is a generalization of the corresponding well-known result for classical $C(6)$ presentations where no relator is a proper power. (For a proof, see \cite[Theorem 13.3]{Ols} and \cite{CCH}.) It can easily be seen from Example \ref{ex:cayley} that this result does not hold for $Gr(6)$ presentations in general. 

\begin{thm}\label{thm:asphericity} Let $\Gamma$ be a $C(6)$-labelled graph. Let $R$ be the set of cyclic reductions of words read on free generating sets of the fundamental groups of the connected components of $\Gamma$. Then the presentation complex associated to $\langle S|R\rangle$ is aspherical.
\end{thm}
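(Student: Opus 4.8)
The plan is to establish that the presentation $\langle S\mid R\rangle$ admits no reduced spherical diagram with a face, and then to conclude via the classical equivalence between this ``combinatorial asphericity'' and topological asphericity of the presentation complex (for the translation, and its use for classical $C(6)$ presentations, see \cite[Theorem 13.3]{Ols} and \cite{CCH}). Note that the combinatorial statement already forces that no word of $R$ is a proper power and that no two words of $R$ agree up to cyclic permutation and inversion --- the side conditions that equivalence requires --- since either failure would itself produce a non-empty reduced spherical diagram.

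So suppose $D$ were a reduced spherical diagram over $\langle S\mid R\rangle$ with a face. After removing connecting arcs and face-free spheres I may assume $D$ is a simple spherical diagram over $\Gamma$, and in fact one of minimal area among reduced simple spherical diagrams over $\Gamma$ possessing a face. I would then apply Lemma \ref{lem:graphical_basic_spherical}. In its second alternative $D$ contains a simple disk subdiagram $\Delta$ with a face, freely trivial boundary word, and all interior edges originating from $\Gamma$; folding $\Delta$ away (replace it by a face with its boundary word and fold, as in the discussion after Lemma \ref{lem:graphical_basic}) and reducing yields a reduced simple spherical diagram over $\Gamma$ with a face and strictly smaller area, contradicting minimality. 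Here I use that a reduced disk diagram with freely trivial boundary is a single vertex, because a non-empty freely reduced cyclic word has a cancelling pair, and that on a sphere the complement of $\Delta$ cannot be face-free. Hence the first alternative holds: every edge of $D$ originates from $\Gamma$.

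To rule this out I would argue that $D$ is then secretly a reduced spherical diagram over a manifestly aspherical auxiliary presentation. Since every edge originates, the per-face lifts agree along shared edges and assemble into a combinatorial map $\phi\colon D^{(1)}\to\Gamma$, which, $D^{(1)}$ being connected, lands in one component $\Gamma_i$. For each face $f$ the reduced cycle $\phi|_{\partial f}$ reads a word of $R$, hence is freely homotopic in $\Gamma_i$, up to inversion, to a loop representing a member $g^{(j)}$ of the fixed free basis of $\pi_1(\Gamma_i)$, and is traversed exactly once --- this uses that no word of $R$ is a proper power, which I would check directly from $C(6)$: a reduced closed path of $\Gamma_i$ reading $v^k$ with $k\ge 2$ that is not itself a $k$-th power splits as $[v^{k-1}][v]$ into two pieces, violating $C(6)$ (a member of a free basis is not conjugate to a proper power). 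Sending $f$ to a $2$-cell attached to $\Gamma_i$ along $\phi|_{\partial f}$ then extends $\phi$ to a map $D\to P$, where $P$ is obtained from $\Gamma_i$ by attaching $2$-cells along loops representing distinct basis elements; collapsing a spanning tree of $\Gamma_i$ exhibits $P\simeq\bigvee S^1$, so $P$ is aspherical, and, its relators being non-powers and pairwise inequivalent, $P$ admits no non-empty reduced spherical diagram. But $D$ is exactly such a diagram over $P$, a contradiction. Therefore no reduced spherical diagram over $\langle S\mid R\rangle$ has a face, and the presentation complex is aspherical.

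I expect the last case to be the main obstacle: the soft reductions carrying an arbitrary reduced spherical diagram to the alternative ``every edge originates from $\Gamma$'' are routine once Lemma \ref{lem:graphical_basic_spherical} is in hand, but recognising that alternative as a reduced spherical diagram over the auxiliary ``free'' presentation $P$ --- in particular verifying that the relevant relator cycles represent distinct basis elements and are traversed only once, where the absence of proper powers in $R$ enters --- requires care.
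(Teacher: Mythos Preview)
Your strategy parallels the paper's: both invoke Lemma~\ref{lem:graphical_basic_spherical} and both handle the ``all edges originate'' case by recognising the diagram as one over an auxiliary aspherical complex built from a single component $\Gamma_i$. But there is a genuine gap in your treatment of the second alternative. After folding $\Delta$ away you obtain a spherical diagram $D'$ carrying the faces of the complement, and you need $D'$ to retain a face after reduction to contradict minimality. The folding, however, creates new adjacencies along $\partial\Delta$: faces of the complement that shared no edge in $D$ may now share one and form a reducible pair, and iterated reduction may annihilate \emph{all} faces of $D'$, leaving nothing to contradict minimality. Your remark about reduced disk diagrams with freely trivial boundary does not rescue this, since after folding the complement is spherical and no longer guaranteed reduced. (A smaller issue: your claim that a proper power in $R$ would itself yield a reduced spherical diagram is false --- over $\langle a\mid a^2\rangle$ every spherical diagram with faces is reducible, yet the complex is $\mathbb{R}P^2$ --- so conciseness and no-proper-powers must be verified directly, as the paper does in Lemma~\ref{lem:concise}; your later $C(6)$ argument for no-proper-powers is fine.)

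The paper circumvents the main gap by working with identity sequences rather than diagrams. A derived sequence for $D$ can be arranged to begin with a derived sequence $\sigma$ for $\Delta$; Lemma~\ref{lem:asphericity} (the sequence-level analogue of your auxiliary-complex argument) shows $\sigma$ is Peiffer-trivial, so the full sequence for $D$ is equivalent to the strictly shorter sequence for the complement alone. Minimality is taken over sequence \emph{length}, and removing a trivial initial segment is a legitimate move in the Peiffer calculus --- whereas in your diagram calculus ``fold and reduce'' can over-collapse. To salvage a purely diagrammatic minimality argument you would need an independent reason the folded complement cannot reduce to the empty diagram; I do not see a short one.
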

Together with \cite[§VIII.2]{Br} this yields:
\begin{cor} The group $G(\Gamma)$ defined by $\Gamma$ has an at most 2-dimensional $K(G(\Gamma),1)$ space and hence cohomological dimension at most 2. Therefore, $G(\Gamma)$ is torsion-free. 
\end{cor}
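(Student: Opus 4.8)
The plan is to verify that the presentation complex $X$ of $\langle S\mid R\rangle$ is aspherical through the standard combinatorial criterion for asphericity of presentation complexes \cite[Theorem~13.3]{Ols},\cite{CCH}: once one knows that no relator in $R$ is a proper power, $X$ is aspherical as soon as the only \emph{reduced} spherical diagram over $R$ is the empty one, where a spherical diagram is reduced if it admits no elementary reduction, i.e.\ if it has no two distinct faces with mutually inverse boundary labels joined by a path in the $1$-skeleton. So the first thing I would do is check that no $r\in R$ is a proper power. If $r=u^{m}$ with $m\geq 2$, let $\Gamma_i$ be the component whose chosen free basis produced $r$; by $C(2)$ there is a unique reduced closed path $P$ in $\Gamma_i$ reading $r$, and (after cyclically reducing the reduced loop reading the relevant basis element) $P$ represents a conjugate of that free basis element $g_k$. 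Comparing $P$ with the closed path obtained by reading $r$ starting $|u|$ edges further along — which again reads $r$ since $r=u^{m}$ — the uniqueness clause of $C(2)$ forces $P$ to have period $|u|$, so $P=\delta^{m}$ for a closed path $\delta$; then $g_k$ is conjugate to $[\delta]^{m}$, contradicting that $g_k$ belongs to a free basis of $\pi_{1}(\Gamma_i)$.

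Next I would argue by contradiction: suppose there is a nonempty reduced spherical diagram over $R$, and choose one, $D$, with the fewest faces. After discarding arcs and passing to a single sphere component, $D$ may be taken to be a simple spherical diagram, still nonempty and reduced with no more faces. Since every element of $R$ is read on a nontrivial cycle of $\Gamma$, $D$ is a simple spherical diagram over $\Gamma$, so Lemma~\ref{lem:graphical_basic_spherical} applies. Its second alternative would give a simple disk subdiagram $\Delta$ of $D$ with at least one face and freely trivial boundary word; capping $\partial\Delta$ by a face-free (tree) filling turns $\Delta$ into a spherical diagram over $R$ with at least one but strictly fewer faces than $D$ (at least one face of $D$ lies outside the disk $\Delta$), and it is still reduced, since any elementary reduction in it involves two faces of $\Delta$ that also furnish an elementary reduction of $D$. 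This contradicts minimality, so I would conclude that the first alternative of Lemma~\ref{lem:graphical_basic_spherical} holds: every edge of $D$ originates from $\Gamma$.

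In that case the lifts to $\Gamma$ of the boundary paths of the faces of $D$ agree on every edge, hence on every vertex, so they assemble into a label-preserving combinatorial map $g\colon D^{(1)}\to\Gamma$ whose image lies in one component $\Gamma_i$; for each face $\sigma$ the closed path $g(\partial\sigma)$ is the lift of $\partial\sigma$ and reads the relator $r_\sigma$ carried by $\sigma$. The relators of $R$ coming from $\Gamma_i$ are the cyclic reductions of the words read on a free basis $g_1,g_2,\dots$ of $\pi_{1}(\Gamma_i)$, and as above the closed path reading the $k$-th such relator represents a conjugate of $g_k$, so it has class $\pm e_k$ in $H_{1}(\Gamma_i)=\bigoplus_{k}\Z e_k$. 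Orienting the faces of $D$ coherently makes $\sum_{\sigma}\sigma$ a $2$-cycle, hence $\sum_{\sigma}\partial\sigma=0$ in $C_{1}(D^{(1)})$, and applying $g$ gives $\sum_{\sigma}\eta_{\sigma}e_{k(\sigma)}=0$ in $H_{1}(\Gamma_i)$, where $k(\sigma)$ indexes $r_\sigma$ and $\eta_\sigma\in\{\pm1\}$ records whether $\partial\sigma$ reads $r_{k(\sigma)}$ or its inverse. Since the $e_k$ form a basis, $\sum_{\sigma : k(\sigma)=k}\eta_\sigma=0$ for each $k$; as $D$ has a face $\sigma_0$, this forces a second face $\sigma_1\neq\sigma_0$ with $k(\sigma_1)=k(\sigma_0)$ and $\eta_{\sigma_1}=-\eta_{\sigma_0}$, so $\partial\sigma_0$ and $\partial\sigma_1$ have mutually inverse boundary labels — an elementary reduction of $D$, contradicting that $D$ is reduced. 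Hence no nonempty reduced spherical diagram over $R$ exists, and $X$ is aspherical.

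The hard part is precisely the first alternative of Lemma~\ref{lem:graphical_basic_spherical}: when every edge originates from $\Gamma$ no edge need be a piece, so the classical move (forget degree‑$2$ vertices to obtain a $[3,6]$‑tessellation of $S^{2}$, which Euler's formula forbids) is unavailable. The way around it, as above, is that in this case the sphere effectively maps into the one‑dimensional graph $\Gamma_i$, and the homological identity carried by the fundamental class of $S^{2}$, together with freeness of $H_{1}(\Gamma_i)$ and the fact that the relevant relators represent basis elements, forces the faces to occur in mutually inverse pairs. The remaining points — confirming $R$ has no proper powers, the reduction to a simple spherical diagram, and the trivial‑boundary subdiagram case — are routine given the tools of Subsection~\ref{subsection:p3}.
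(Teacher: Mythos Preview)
Your overall architecture is close to the paper's and the homological idea in the ``all edges originate'' case is attractive, but the argument has a genuine gap in the asphericity criterion you invoke.

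The criterion you state --- that (no proper powers) together with ``every nonempty spherical diagram has two distinct faces with mutually inverse boundary labels joined by a path'' forces asphericity --- is not what \cite[Theorem~13.3]{Ols} or \cite{CCH} provide. In those sources the reducible pair must share an \emph{edge} (so that reading the two boundaries from that edge in opposite senses gives the same word); only then does cancelling the pair correspond to a deletion in a derived sequence, and only then does induction on the number of faces trivialise every identity sequence. Your weaker notion (merely connected by a path) does not yield such a cancellation: if $\sigma_0$ contributes $g_0 r_k^{\varepsilon} g_0^{-1}$ and $\sigma_1$ contributes $g_1 r_k^{-\varepsilon} g_1^{-1}$ to the derived sequence, these are inverse only when $g_0^{-1}g_1$ lies in $\langle r_k\rangle$, and the label of a path in $D^{(1)}$ joining the two basepoints is the label of a \emph{closed} path in $\Gamma_i$, hence an arbitrary element of $\ell_*(\pi_1(\Gamma_i))$, not in general a power of $r_k$. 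Your $H_1(\Gamma_i)$ computation shows only that for each $k$ the signed count $\sum_{\sigma:k(\sigma)=k}\eta_\sigma$ vanishes --- a necessary condition for triviality of the identity sequence, but not a sufficient one --- and it does not produce two faces sharing an edge. (You also omit conciseness, which the standard criterion needs; it does hold here, by the same $C(2)$ uniqueness argument you used for proper powers, but it should be stated.)

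This is exactly the point where the paper does more work. In Lemma~\ref{lem:asphericity} the paper exploits that the derived sequence entries are images under $\ell_*$ of conjugates of the chosen free generators $\phi_i$ of $\pi_1(\Gamma_i)$, rewrites each conjugator as a word $W_j$ in the $\phi_i$, and uses \emph{insertions} followed by exchanges to peel off the letters of each $W_j$; this converts the sequence into one of the form $(\ell_*(\phi_{g_1}^{h_1}),\ldots,\ell_*(\phi_{g_N}^{h_N}))$, whose triviality then follows from freeness of $\pi_1(\Gamma_i)$. In other words, the paper works in $\pi_1$ rather than $H_1$, and that extra (nonabelian) information is precisely what bridges the gap between ``faces pair up in homology'' and ``the identity sequence is trivial''. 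Your second-alternative reduction (capping the freely trivial disk $\Delta$) matches the paper's strategy, but your first-alternative step needs to be upgraded from the $H_1$ pairing to an actual trivialisation of the derived sequence along the lines of Lemma~\ref{lem:asphericity}.
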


\begin{lem}\label{lem:infinite_presentation} Let $\Gamma$ and $R$ be as in Theorem \ref{thm:asphericity}. Let $R'$ be a proper subset of $R$. Then the homomorphism $\langle S|R'\rangle\to\langle S|R\rangle$ induced by the identity on $S$ is not injective. In particular, if $R$ is infinite, then $G(\Gamma)$ is not finitely presented.
\end{lem}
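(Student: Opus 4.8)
The plan is to build, for a chosen relator $r \in R \setminus R'$, a word $w$ that equals $1$ in $\langle S \mid R \rangle$ but not in $\langle S \mid R' \rangle$; the natural candidate is $w = r$ itself, or a cyclic reduction thereof. Since $r=1$ trivially holds in $\langle S \mid R\rangle$, the whole content is to show $r \neq 1$ in $\langle S \mid R'\rangle$. Now $\langle S \mid R'\rangle$ is (up to the obvious identification) the group $G(\Gamma')$ for a subgraph $\Gamma' \subseteq \Gamma$: namely, since the relators of $R$ are indexed by free generating sets of the $\pi_1$ of the components of $\Gamma$, dropping $r$ from $R$ amounts to passing to a graph $\Gamma'$ whose relevant component has a smaller-rank fundamental group (e.g. remove one edge, or a suitable connected subgraph, realizing the corresponding basis element). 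Crucially, $\Gamma'$ is still a $C(6)$-labelled graph, since removing edges only removes pieces and cannot create new ones, and the labelling remains reduced. So Corollary \ref{cor:graphical_basic} and Lemma \ref{lem:graphical_simple} apply to $\Gamma'$.

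The key step is then to argue that if $r = 1$ in $G(\Gamma') = \langle S \mid R'\rangle$, one derives a contradiction with the small cancellation hypothesis. Let $D$ be a minimal diagram for $r$ over $\Gamma'$. If $D$ has no faces, then $r$ is freely trivial, contradicting that $r$ is a cyclic reduction of a nontrivial element of a free group (here we use that the $R$ are cyclically reduced and nonempty as words, which in turn uses that the component has nontrivial $\pi_1$ — if the component is a tree there is no relator to remove, so $R' = R$). If $D$ has exactly one face, then $r$ is, up to cyclic permutation and inversion, the boundary word of a single face, i.e. a word read on a simple cycle of $\Gamma'$; one then checks that a word read on a nontrivial cycle of $\Gamma'$ cannot be (a cyclic conjugate of) $r$, because $r$ together with the basis of $\pi_1(\Gamma')$ would form a generating set of a free group of rank exceeding the rank of $\pi_1(\Gamma')$ — a contradiction. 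If $D$ has at least two faces, it is by Corollary \ref{cor:graphical_basic} a $(3,6)$-diagram, so the first curvature formula forces a boundary face $f$ with $i(f) \le 3$; its boundary path then consists of at most $3$ interior arcs (each a piece) together with a single exterior arc. Since $\partial f$ lifts to a nontrivial cycle of $\Gamma'$ which (being $C(6)$) is not a concatenation of fewer than $6$ pieces, the exterior arc of $f$ must itself be a concatenation of at least $3$ pieces; but the exterior arc is a subpath of $\partial D$, whose label is the cyclically reduced relator $r$, so $r$ contains long subwords that are concatenations of pieces of $\Gamma'$, and one derives that $r$ lies in $\langle\langle \operatorname{im}\ell_*^{\Gamma'}\rangle\rangle$ in a way contradicting the rank count.

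The main obstacle I anticipate is the bookkeeping in the "exactly one face" and "rank" arguments: making precise that removing a basis element from the generating set of $\pi_1$ of a component genuinely yields a smaller normal subgroup of $F(S)$, i.e. that $\ell_*$ restricted to the smaller free group does not accidentally surject onto the same normal closure. This is exactly where one needs the small cancellation condition — in an arbitrary labelled graph such a collapse could occur (cf. Example \ref{ex:cayley}), but under $C(6)$ the van Kampen diagram analysis above shows it cannot, since any equality $r = 1$ in $G(\Gamma')$ would produce a nonpositively-curved diagram whose boundary is a single reduced relator, forcing $r$ to be expressible via the simple cycles of the strictly smaller graph $\Gamma'$ — impossible. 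The final sentence of the lemma is then immediate: if $R$ is infinite, for every finite $R' \subsetneq R$ the map $\langle S \mid R'\rangle \to G(\Gamma)$ is non-injective, so no finite subset of $R$ suffices as a relator set, and $G(\Gamma)$ is not finitely presented (using also that $S$ is finite, so a finite presentation could be taken with generating set $S$).
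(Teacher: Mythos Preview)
Your approach differs from the paper's and, as written, has a genuine gap. The paper does not redo any curvature analysis here at all: it simply invokes what was just proved for Theorem~\ref{thm:asphericity}, namely that every identity sequence over $R$ is trivial. If some $r\in R$ lay in $\langle\langle R\setminus\{r\}\rangle\rangle$, one could glue a single $r$-face to a diagram for $r$ over $R\setminus\{r\}$ to obtain a spherical diagram over $R$ with exactly one face labelled $r$; a derived sequence for it contains one conjugate of $r$ and, by the conciseness established in Lemma~\ref{lem:concise}, no conjugate of $r^{-1}$, so it cannot reduce to the empty sequence --- contradicting triviality. That is the whole proof.

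Your route via a subgraph $\Gamma'=\Gamma\setminus\{e\}$ is a reasonable alternative idea, but the execution breaks down. In the one-face case your ``rank count'' does not do the work: what you actually need is that the single face gives a simple cycle $c'\subset\Gamma'$ with label (a cyclic conjugate of) $r$, while $r$ is also read on the cyclically reduced generator path $c\subset\Gamma$, and $c$ traverses $e$ whereas $c'$ does not; hence $c\neq c'$ and the whole cycle is a single piece, contradicting $C(6)$. That is a piece argument, not a rank argument. In the $\geq 2$-face case your reasoning is not an argument at all: from ``$\partial f$ is not a concatenation of fewer than $6$ pieces'' you cannot conclude that the exterior arc is a concatenation of at least $3$ pieces --- the exterior arc may simply fail to be a piece, and then there is nothing to count. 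The sentence ending ``in a way contradicting the rank count'' has no content. One can rescue the approach by gluing an $r$-face $f_0$ (lifting via $c$) onto $\partial D$ to form a sphere and then arguing, via Lemma~\ref{lem:graphical_basic_spherical}, that since the interior edges of $D$ do not originate from $\Gamma$ this sphere cannot exist; but carrying this out carefully (handling the arcs of $\partial f_0$ that do originate from $\Gamma$, and the possibility that $D$ is not a simple disk) is substantially more work than what you wrote, and in the end amounts to reproving asphericity for this particular sphere. The paper's three-line argument is the efficient one precisely because asphericity has already been established.
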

We postpone the proof of this lemma as it will be derived from our proof of Theorem \ref{thm:asphericity}. To prove asphericity, we will show that spherical diagrams over a $C(6)$ graph are reducible in an appropriate sense. We use definitions and results from \cite{CH} that allow an algebraic treatment of diagrams:

Let $R\subset F(S)$, and set $R^S:=\{gr^\epsilon g^{-1}|r\in R, g\in F(S),\epsilon\in\{\pm1\}\}\subset F(S)$. A \emph{sequence over $R$} is a finite sequence of elements of $R^S$. On all sequences over $R$, we consider the following operations:
\begin{itemize}
 \item Exchange: Replace a pair $(x,y)$ by $(xyx^{-1},x)$ or by $(y,y^{-1}xy)$.
 \item Deletion: Delete a pair $(x,x^{-1})$.
 \item Insertion: Insert at any position a pair $(x,x^{-1})$ for any $x\in R^S$.
\end{itemize}
We call two sequences over $R$ \emph{equivalent} if one can be transformed into the other by a finite sequence of these operations. We call any finite sequence over $R$ an \emph{identity sequence} if the product of its elements (taken in the order they appear in the sequence) is trivial in $F(S)$. We call a sequence \emph{trivial} if it is equivalent to the empty sequence. 

To a diagram $(D,v)$ over $R$, we can associate a sequence $\Sigma$ over $R$ as follows: From $D$, we construct a singular disk diagram $D'$ by ``ungluing'' faces of $D$ along edges, such that $D'$ is a bouquet of faces, each connected to the basepoint $v$ by an arc, such that the boundary word of $D'$ is freely equal to that of $D$. If $D$ is spherical, we choose some embedding in the plane after ungluing the first edge. The boundary word of $D'$ is freely trivial in that case.

The sequence $\Sigma$ is then the sequence of the labels of faces of $D'$, each conjugated by the label of the path that connects it to $v$, read in the order in which they appear in the boundary path. The resulting sequence is called a \emph{derived sequence} for $(D,v)$.  Its length is equal to the number of faces of $D$, and the product over all elements of $\Sigma$ is freely equal to the boundary word of $D$. 

Whilst a derived sequence for a diagram $(D,v)$ is not unique, \cite[Proposition 8]{CH} yields that any two derived sequences for a singular disk diagram $(D,v)$ are equivalent. Moreover, \cite[Corollary 1 of Proposition 8]{CH} implies that if $\Sigma$ and $\Sigma'$ are derived sequences for a spherical diagram $D$, $\Sigma$ is trivial if and only if $\Sigma'$ is.

For any sequence $\Sigma$ over $R$, we can construct a singular disk diagram $D$ over $R$ which has $\Sigma$ as a derived sequence. (This is done by reversing the procedure described above.) If $\Sigma$ is an identity sequence, the diagram has freely trivial boundary word. This freely trivial boundary word can be ``sewn up'' to obtain a spherical diagram. Sewing up means that, whenever there are two edges bearing inverse labels in the boundary, we fold them together until eventually we are left with a spherical diagram. (See \cite[Section 1.5]{CH} for details.) We call a diagram constructed from a sequence $\Sigma$ an \emph{associated diagram} for $\Sigma$

The following theorem is deduced from \cite[Proposition 1.3 and Proposition 1.5]{CCH}. A presentation $\langle S|R\rangle$ is \emph{concise} if for any relator $r\in R$, no $r'\in R$ that is distinct from $r$ is conjugate to $r$ or $r^{-1}$.

\begin{thm} Let $\langle S|R\rangle$ be a presentation where each relator is nontrivial and freely reduced. Then the associated presentation complex is aspherical if and only if:
 \begin{itemize}
  \item The presentation is concise,
  \item no relator is a proper power,
  \item and any identity sequence over $R$ is trivial.
 \end{itemize}
\end{thm}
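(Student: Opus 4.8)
The plan is to deduce this statement from the calculus of identity sequences developed in \cite{CCH}, using the correspondence between spherical diagrams and derived sequences recalled above as the bridge between topology and algebra. Recall first that, for the presentation complex $K$ of $\langle S\mid R\rangle$ --- a two-dimensional CW-complex --- asphericity (contractibility of $\widetilde K$) is equivalent to $\pi_{2}(K)=0$, because $\widetilde K$ is simply connected and two-dimensional. So the real content is a characterization of $\pi_{2}(K)=0$, and this is what \cite[Proposition~1.3]{CCH} and \cite[Proposition~1.5]{CCH} provide: the former identifies $\pi_{2}(K)$ with the module of identities among the relations, presented through the exchange, deletion and insertion moves, and the latter pins down when this module vanishes, which is exactly the conjunction of the three listed conditions.

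Of those three, triviality of every identity sequence over $R$ is the substantive one; the other two are more elementary requirements whose failure is, by itself, already incompatible with asphericity, yet does not by itself create any nontrivial identity sequence over $R$ --- which is why all three must be listed. If two \emph{distinct} relators $r,r'\in R$ satisfy $r'=g r^{\pm1}g^{-1}$ for some $g\in F(S)$, then attaching the $r'$-cell last exhibits $K$, up to homotopy, as a wedge with a $2$-sphere, so $\pi_{2}(K)\neq 0$; yet the identity sequence witnessing $r'\cdot(g r^{\mp1}g^{-1})=1$ is a deletable pair in $R^{S}$, hence trivial. Conciseness rules this out. Similarly, a relator that is a proper power $r=s^{n}$ with $n\geq 2$ is a classical obstruction to asphericity, reflecting the non-asphericity of the pseudo-projective plane $\langle a\mid a^{n}\rangle$, and it likewise does not force a nontrivial identity sequence over $R$; the no-proper-power hypothesis rules this out. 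Thus \cite[Proposition~1.5]{CCH} may be read as: once the presentation is concise and has no proper-power relator, $\pi_{2}(K)=0$ is equivalent to triviality of every identity sequence over $R$.

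The two implications then assemble formally. If the presentation is concise, has no proper-power relator, and has every identity sequence over $R$ trivial, then $\pi_{2}(K)=0$ by \cite[Propositions~1.3 and~1.5]{CCH}, i.e.\ $K$ is aspherical. Conversely, if $K$ is aspherical, then neither of the two obstructions above can be present, so the presentation is concise and has no proper-power relator, whence \cite[Proposition~1.5]{CCH} forces every identity sequence over $R$ to be trivial. The one point where the diagram language of \cite{CH} is needed is to know that ``triviality of the derived sequence of a spherical diagram over $R$'' is a well-defined property of the diagram, independent of the choices in producing the derived sequence; this is \cite[Corollary~1 of Proposition~8]{CH}, and it is what legitimises passing back and forth between spherical diagrams and identity sequences.

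The step I expect to demand the most care is not mathematical but a matter of matching up definitions. The paper \cite{CCH} carries along several a priori distinct asphericity-type notions at once, and one must verify that, under the standing hypothesis here --- every relator nontrivial and freely reduced --- the chain of equivalences running from $\pi_{2}(K)=0$ down to the three listed conditions goes through with no hidden extra assumptions, and in particular that the elementary objects set up above (sequences of elements of $R^{S}$, the exchange/deletion/insertion moves, and the meanings of ``identity sequence'' and of ``trivial'') are literally those of \cite{CCH} and \cite{CH}, so that Propositions~1.3 and~1.5 apply verbatim. Beyond that bookkeeping there is no construction or estimate to carry out; the proof is a translation.
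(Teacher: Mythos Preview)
Your proposal is correct and matches the paper's approach exactly: the paper does not give a proof but simply states that the theorem ``is deduced from \cite[Proposition~1.3 and Proposition~1.5]{CCH},'' and your write-up is precisely an elaboration of how that deduction goes. There is nothing to add; if anything, your discussion of why conciseness and the no-proper-power condition must be listed separately is more informative than what the paper provides.
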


We will show that the presentation in our theorem satisfies these three conditions.

\begin{lem}\label{lem:concise} Let $\Gamma$ be a $C(2)$-labelled graph. Let $R$ be the set of cyclic reductions of words read on free generating sets of the fundamental groups of the connected components of $\Gamma$ (with respect to some base points). 
Then $R$ is concise and contains no proper powers. 
\end{lem}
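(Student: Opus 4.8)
\textbf{Proof plan for Lemma~\ref{lem:concise}.}
The plan is to prove the two assertions separately, using only the definition of $R$ and the $C(2)$ condition (i.e.\ that the labelling is reduced and no nontrivial closed path is a single piece). For conciseness, suppose $r$ and $r'$ are distinct elements of $R$ with $r' = g r^{\pm 1} g^{-1}$ in $F(S)$. Each of $r,r'$ is a cyclic reduction of a word read on a generator of the fundamental group of some component, so $r = \ell(\gamma)$ and $r' = \ell(\gamma')$ for reduced closed paths $\gamma,\gamma'$ on $\Gamma$ (possibly in different components). First I would reduce to the case $g=1$: since $r$ and $r'$ are cyclically reduced and conjugate in $F(S)$, $r'$ is a cyclic permutation of $r^{\pm 1}$, hence $r'$ is also read on a reduced closed path obtained from $\gamma$ by moving the basepoint and possibly inverting. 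So it suffices to produce, from a nontrivial equality $\ell(\gamma) = \ell(\gamma')$ of labels of reduced closed paths, a nontrivial closed path that is a single piece, contradicting $C(2)$ --- unless $\gamma,\gamma'$ lie in the same component and the equality is ``accounted for'' by the graph, which is exactly what forces $r=r'$. Concretely: if $\gamma$ and $\gamma'$ lie in different components, then the common label word $w$ is read by a path in each component; since $w$ (being a cyclic reduction of something nontrivial) is reduced and nonempty, and it has (at least) two maps to $\Gamma$ landing in distinct components, $w$ is a piece, and being a nontrivial closed path it violates $C(2)$. If $\gamma,\gamma'$ lie in the same component, I would argue that the two lifts of $w$ must coincide (otherwise again $w$ is a piece and we get a nontrivial closed path equal to one piece); but if the lifts coincide then $\gamma$ and $\gamma'$ are the same reduced closed path up to basepoint, so $r$ and $r'$ are cyclic permutations of each other's inverses, and the choice of $R$ as a set of \emph{cyclic reductions of words read on a fixed free generating set} must be set up so this forces $r=r'$ --- here I should be slightly careful and perhaps note that $R$ is the \emph{set} of such cyclic reductions, so genuinely equal words are identified, and the residual ambiguity (a generator versus its inverse, or two generators of the same $\pi_1$ reading cyclically-permuted-conjugate words) must be excluded, which is automatic once one checks that a free generating set of a free group has no two elements that are conjugate or that one is a cyclic permutation of the other's inverse in the corresponding path sense. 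The main obstacle is precisely this bookkeeping about \emph{which} reduced closed paths can read cyclically-equivalent words within a single component; the clean way is to invoke that for a graph with reduced labelling the map $\ell$ on reduced paths is injective on each component (a path is determined by start vertex and label, as noted in Example~\ref{ex:cayley}), so two reduced closed paths in one component with the same label and same start vertex are equal, and then conjugacy/cyclic-permutation of their labels translates into the paths being related by basepoint change, which does not change the element of $R$.

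For the ``no proper powers'' claim, suppose some $r\in R$ equals $u^k$ in $F(S)$ with $k\geq 2$ and $u$ not a proper power. Write $r = \ell(\gamma)$ for a reduced closed path $\gamma$; since $r$ is cyclically reduced, so is $u$, and $\gamma = \delta^k$ where $\delta$ is the reduced closed subpath reading $u$ (cyclic reducedness of $r=u^k$ guarantees the juxtaposition $u\cdots u$ has no cancellation, so $\gamma$ literally decomposes as $k$ consecutive copies of $\delta$, and in particular $\delta$ is a genuine closed path in $\Gamma$). Now $u$ is a nonempty reduced word read on the closed path $\delta$, and it is \emph{also} read on the path $\delta'$ which is the second copy of $\delta$ inside $\gamma$ --- these are two distinct maps of the labelled path $u$ into $\Gamma$ provided $\delta$ and $\delta'$ have distinct starting vertices, which holds because the starting vertex of $\delta'$ is the terminal vertex of $\delta$, and if these two vertices coincided then $\delta$ would be a closed path of length $|u|\geq 1$ based at a single vertex repeated $k$ times, still giving two distinct maps by shifting within $\gamma$ one edge over (or, in the degenerate case $|u|=1$, the reducedness of the labelling is contradicted directly since the same edge-label would appear twice consecutively at a vertex, i.e.\ $\delta^2$ backtracks, contradicting that $\gamma$ is reduced). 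Hence $u$ is a piece; and $u$ is the label of the nontrivial closed path $\delta$ (nontrivial because it is a reduced closed path reading a nonempty reduced word, hence not $0$-homotopic --- a $0$-homotopic reduced closed path is constant). So a nontrivial closed path is a single piece, contradicting $C(2)$.

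I would write the argument uniformly for both ``piece'' and ``essential piece'' per the convention stated after Definition~\ref{defi:graphical2}, but since the statement only asks for $C(2)$ I will state it for pieces. The only genuinely delicate point is organizing the conciseness argument so that the case of two paths in the \emph{same} component is correctly dispatched: one must be sure that the conclusion ``$r=r'$'' really follows, and not merely ``$r$ and $r'$ are cyclic rearrangements of each other's inverses.'' The resolution is to recall how $R$ was defined --- as cyclic reductions of the images under $\ell$ of a \emph{fixed} free generating set of each $\pi_1(\Gamma_i, v_i)$ --- and to use that distinct elements of a free basis of a free group are never conjugate and never map to cyclically-equivalent-up-to-inversion reduced words, so two reduced closed paths based at the common point $v_i$ whose labels are conjugate (equivalently, cyclically equal up to inversion, since both labels are cyclically reduced) must be freely homotopic, hence represent the same free basis element up to inversion, hence yield the same cyclic reduction, i.e.\ the same element of $R$. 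This is the step I expect to spend the most words on; everything else is a direct application of the $C(2)$ condition via the observation (already recorded in the excerpt) that a reduced path is determined by its start vertex and label.
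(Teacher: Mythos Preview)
Your conciseness argument is essentially the paper's: reduce to two cyclically reduced closed paths with the same label, observe that if the two lifts to $\Gamma$ differ then the whole closed path is a single piece (violating $C(2)$), and if they coincide then the underlying fundamental-group elements are conjugate up to inversion, which is impossible for distinct members of a free basis. That part is fine.

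The proper-powers argument, however, has a genuine gap. You write $\ell(\gamma)=u^k$ and then assert ``$\gamma=\delta^k$ where $\delta$ is the reduced \emph{closed} subpath reading $u$''. This is not justified. Decompose $\gamma=\gamma_0\gamma_1\cdots\gamma_{k-1}$ with $\ell(\gamma_j)=u$; the $\gamma_j$ are paths in $\Gamma$, but there is no a priori reason each $\gamma_j$ is closed, nor that they all coincide. You then run a case split, but the two conclusions you need (``$u$ is a piece'' and ``$\delta$ is a nontrivial closed path'') come from \emph{different} branches:
\begin{itemize}
\item If $\gamma_0$ and $\gamma_1$ have distinct initial vertices, then indeed $u$ is a piece---but then $\gamma_0$ is \emph{not} closed (its terminal vertex is the initial vertex of $\gamma_1$), so you have no closed path labelled by the single piece $u$.
\item If $\gamma_0$ and $\gamma_1$ have the same initial vertex, then $\gamma_0$ is closed---but your ``shift one edge over'' does not produce a second map of the labelled path $u$ into $\Gamma$ (it produces a path reading a cyclic shift of $u$, a different word), so you have not shown $u$ is a piece.
\end{itemize}
Thus in neither branch do you obtain a nontrivial closed path that is a single piece.

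The paper's fix is exactly to treat the two branches differently. When the $\gamma_j$ do not all coincide, one uses the \emph{whole} cycle $\gamma$ (not $u$): the shift $\gamma_j\mapsto\gamma_{j+1}$ gives two distinct label-preserving maps of $\gamma$ into $\Gamma$, so the nontrivial closed path $\gamma$ itself is a single piece, contradicting $C(2)$. When the $\gamma_j$ all coincide (so each is closed and equal to $\gamma_0$), one does \emph{not} appeal to $C(2)$ at all; instead $[\gamma]=[p\gamma_0p^{-1}]^k$ in $\pi_1(\Gamma_i,\nu_i)$, and an element of a free basis of a free group is never a proper power. Your plan is missing this second mechanism entirely.
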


\begin{proof}
For any cycle $\gamma$ on $\Gamma$, there exists a maximal initial subpath $p$ such that $\gamma=p\gamma'p^{-1}$ for some subpath $\gamma'$. Then $\gamma'$ is a cycle and, since the labelling of $\Gamma$ is reduced, its label $\ell(\gamma')$ equals the cyclic reduction of $\ell(\gamma)$. We call $\gamma'$ the \emph{cyclic reduction} of $\gamma$.

Let $\Gamma_i$ for $i\in I$, where $I$ is some index set, denote the connected components of $\Gamma$, and let $\nu_i$ denote a base vertex in $\Gamma_i$ for each $i$. Let $\gamma$ be the reduced representative of an element of a free generating set of some $\pi_1(\Gamma_i,\nu_i)$ and $\gamma'$ its cyclic reduction such that $\gamma=p\gamma' p^{-1}$ for some path $p$. Suppose $\ell(\gamma')=w^n$ for $w\neq 1$ and $n>1$. Let $\gamma'=\gamma_0...\gamma_{n-1}$ such that $\ell(\gamma_j)=w$ for $j\in \Z/n\Z$. 

Then the map of labelled graphs $\gamma'\to\gamma'$ induced by $\gamma_j\to\gamma_{j+1}$ must be the identity, for otherwise $\gamma'$ would have two distinct maps of labelled graphs to $\Gamma$ and thus would be a single piece. Therefore, $\gamma_j=\gamma_{j+1}$ for all $j$, and each $\gamma_j$ is a cycle. Thus $[\gamma]=[p\gamma_0 p^{-1}]^n$ in $\pi_1(\Gamma_i,\nu_i)$. But it is easy to see that an element of a free generating set of $\pi_1(\Gamma_i,\nu_i)$ cannot be a proper power.

Now assume that there are two distinct relators $r$ and $r'$ in $R$ coming from cycles $[\gamma]$ and $[\delta]$ in the free generating sets, where $\gamma$ and $\delta$ are reduced, such that $r'$ is conjugate to $r$ (or $r^{-1}$). Since $r$ and $r'$ are cyclically reduced, they coincide up to a cyclic permutation (and possibly inversion). Again, consider the cyclic reductions $\gamma'$ and $\delta'$. Then we can perform a cyclic shift on $\delta'$ such that the resulting cycle $\tilde \delta '$ has the same label as $\gamma'$ (or $(\gamma')^{-1}$). But then $\tilde \delta'$ must equal $\gamma'$ (or $(\gamma')^{-1})$) for otherwise, it would be a single piece. This implies that $[\gamma]$ and $[\delta]$ lie on the same connected component of $\Gamma$, say $\Gamma_i$, and that they are conjugate in $\pi_1(\Gamma_i,\nu_i)$ (or conjugate up to inversion). This cannot hold for two elements of a free generating set.
\end{proof}

\begin{lem}\label{lem:asphericity}
Let $\Gamma$ be a connected $C(2)$-labelled graph. Let $R$ be the set of cyclic reductions of words read a set of free generators of $\pi_1(\Gamma,\nu)$ for some $\nu\in\Gamma$. Let $(D,v)$ be a simple disk diagram over $R$ with freely trivial boundary word such that every interior edge originates from $\Gamma$. Then any derived sequence is a trivial identity sequence.
\end{lem}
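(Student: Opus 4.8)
The plan is to transfer the statement into the free group $\pi_1(\Gamma,\nu)$ via a map of labelled graphs $\phi\colon D^{(1)}\to\Gamma$ assembled from the lifts of the faces of $D$, then to recognize a derived sequence of $(D,v)$ as the $\ell_*$-image of an identity sequence over a free basis of $\pi_1(\Gamma,\nu)$, and finally to use that the trivial presentation $\langle X\mid X\rangle$ admits only trivial identity sequences. By \cite[Proposition~8]{CH} all derived sequences of $(D,v)$ are equivalent, so it suffices to treat a single one; moreover the product of the entries of a derived sequence equals the boundary word of $D$, which is freely trivial by hypothesis, so any derived sequence is automatically an identity sequence and only triviality is at issue.

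First I would construct $\phi$. By $C(2)$ the boundary path of each face $f$ of $D$ has a unique lift to $\Gamma$, that is, a unique map of labelled graphs onto a cycle of $\Gamma$, since two distinct such maps would exhibit the nontrivial cycle $\partial f$ as a single piece. These lifts define $\phi$ on $\bigcup_f\overline{f}^{(1)}$, which is all of $D^{(1)}$ because every edge of the disk diagram $D$ lies on the boundary of a face, and the hypothesis that every interior edge originates from $\Gamma$ says exactly that the lifts belonging to the two faces meeting along such an edge agree there. Since $D$ is connected and simply connected, these local lifts glue to a single map of labelled graphs $\phi\colon D^{(1)}\to\Gamma$. (That $R$ has no proper powers, Lemma~\ref{lem:concise}, guarantees that each lift wraps exactly once around its image cycle.)

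Next, since the labelling of $\Gamma$ is reduced, a reduced loop reads a freely reduced word, whence $\ell_*\colon\pi_1(\Gamma,\nu)\to F(S)$ is injective. Let $x_1,\dots,x_m$ be the free basis of $\pi_1(\Gamma,\nu)$ used to define $R$ and transport it along a path from $\phi(v)$ to $\nu$ to a free basis $y_1,\dots,y_m$ of $\pi_1(\Gamma,\phi(v))$; then each $\ell_*(y_j)$ is conjugate in $F(S)$ to a member of $R$, so $\ell_*(y_j)\in R^S$. Let $\Sigma=(g_1r_1^{\epsilon_1}g_1^{-1},\dots,g_kr_k^{\epsilon_k}g_k^{-1})$ be a derived sequence coming from an ungluing of $D$. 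Applying $\phi$ to that ungluing, the arc joining $v$ to the $i$-th face maps to a path in $\Gamma$ from $\phi(v)$ reading $g_i$, and the boundary of that face maps to the lift of $r_i$; reading around the resulting loop in $\Gamma$ shows $g_ir_i^{\epsilon_i}g_i^{-1}=\ell_*(\eta_i)$, where $\eta_i\in\pi_1(\Gamma,\phi(v))$ is a conjugate of $y_{j(i)}^{\epsilon_i}$. Since $\prod_i\ell_*(\eta_i)$ is freely trivial and $\ell_*$ is injective, $\prod_i\eta_i=1$ in $\pi_1(\Gamma,\phi(v))=F(y_1,\dots,y_m)$, so $(\eta_1,\dots,\eta_k)$ is an identity sequence over $\{y_1,\dots,y_m\}$.

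Finally, the presentation complex of $\langle y_1,\dots,y_m\mid y_1,\dots,y_m\rangle$ is contractible (collapse each $2$-cell across its free boundary edge), hence aspherical, so by the criterion stated before Lemma~\ref{lem:concise} every identity sequence over $\{y_1,\dots,y_m\}$ is trivial; in particular $(\eta_i)_i$ is. Now $\ell_*$ carries the exchange, deletion and insertion operations on sequences over $\{y_1,\dots,y_m\}$, with conjugators taken in $F(y_1,\dots,y_m)$, to the corresponding operations on sequences over $R$, with conjugators in $F(S)$; the only point requiring a check is that $\ell_*$ sends every conjugate of a $y_j^{\pm1}$ into $R^S$, which holds because $\ell_*(y_j)$ is conjugate in $F(S)$ to a member of $R$. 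Hence a reduction of $(\eta_i)_i$ to the empty sequence pushes forward to a reduction of $\Sigma$ to the empty sequence, and $\Sigma$ is trivial. I expect the main obstacle to be the bookkeeping in the third paragraph: assembling $\phi$ correctly, and above all tracking basepoints and conjugating paths precisely enough that the derived sequence of $D$ is literally the $\ell_*$-image of an identity sequence over a free basis of $\pi_1(\Gamma)$.
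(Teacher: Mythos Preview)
Your argument is correct and follows the same core strategy as the paper: build a global lift $D^{(1)}\to\Gamma$ from the unique face-lifts (using $C(2)$ and the ``originates from $\Gamma$'' hypothesis), then recognize a derived sequence of $(D,v)$ as the $\ell_*$-image of an identity sequence over a free basis of $\pi_1(\Gamma)$, and finally conclude triviality. The only substantive difference is in this last step: the paper carries out the reduction by hand, inserting pairs $(\ell_*(\phi_i^{\pm1}),\ell_*(\phi_i^{\mp1}))$ and exchanging so as to peel off the conjugators $W_j$ letter by letter, then deleting; you instead observe that the presentation complex of $\langle X\mid X\rangle$ is contractible and invoke the CCH criterion to get triviality in $\pi_1(\Gamma)$, then push forward via $\ell_*$. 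Your route is a clean shortcut, though it trades self-containedness for an appeal to the cited theorem; the paper's explicit manipulation is precisely a proof of the special case of CCH you are quoting. One cosmetic point: you write the basis as $x_1,\dots,x_m$, but $\pi_1(\Gamma,\nu)$ need not be finitely generated; this is harmless since only finitely many basis elements are involved in any given $D$, and both your argument and the paper's go through verbatim with an arbitrary free basis.
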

\begin{proof} Let $\{\phi_i|i\in I\}$ be a set of free generators of $\pi_1(\Gamma,\nu)$ such that
$$\phi_i=[\gamma_i \rho_i\gamma_i^{-1}],$$
where $\ell(\rho_i)$ is equal to the cyclic reduction of $\ell_*(\phi_i)$ and $\gamma_i$ is a reduced path. Denote the terminal vertex of $\gamma_i$ by $\nu_i$. Then $\{\ell(\rho_i)|i\in I\}=R$, and $\{\ell_*(\phi_i)|i\in I\}\subset R^S$.

The diagram $D$ globally lifts to $\Gamma$ via a map $\lambda:D^{1}\to \Gamma$, where $D^1$ denotes the 1-skeleton of $D$. Let $v\in\partial D$. Since $\Gamma$ is connected, we may assume that $\nu$ was chosen such that $\nu=\lambda(v)$. The path $\lambda(\partial D)$ is a cycle on $\Gamma$ based at $\nu$. Since the labelling of $\Gamma$ is reduced, the assumption that the boundary label of $D$ is freely trivial implies that $\lambda(D)$ is 0-homotopic, i.e.\ $[\lambda(\partial D)]=1\in\pi_1(\Gamma,\nu)$.

We number the faces of $D$ as $f_1,...,f_n$ and choose an orientation on each face. For a face $f_j$ with boundary label $\ell(\rho_i)^{\epsilon_j}$, where $i\in I$ and $\epsilon_j\in\{\pm1\}$, let $v_j$ denote the vertex that lifts to $\nu_i$, and set $t_j:=i$. Then there are paths $p_j$ in $D^1$ from $v$ to $v_j$ such that in $\pi_1(\Gamma,\nu)$:
\begin{equation*}
 1=[\lambda(\partial D)]=[\lambda(p_1)\rho_{t_1}^{\epsilon_1}\lambda(p_1)^{-1}]\ldots [\lambda(p_n)\rho_{t_n}^{\epsilon_n}\lambda(p_n)^{-1}]. 
\end{equation*}
For each $j$, the path $\lambda(p_j)\gamma_{t_j}^{-1}$ is a cycle based at $\nu$; we denote $\eta_j:=[\lambda(p_j)\gamma_{t_j}^{-1}]$. The sequence
$$\Sigma:=(\ell_*(\eta_1\phi_{t_1}^{\epsilon_1}\eta_1^{-1}),\ldots)=(\ell_*(\eta_1)\ell_*(\phi_{t_1}^{\epsilon_1})\ell_*(\eta_1^{-1}),\ldots)$$
is a derived sequence for $(D,v)$. For each $j$, we can express $\eta_j$ in the free generators $\{\phi_i|i\in I\}$ as a reduced word $W_j$ and write:
$$\Sigma=(\ell_*(W_1)\ell_*(\phi_{t_1}^{\epsilon_1})\ell_*(W_1^{-1}),\ldots ,\ell_*(W_n)\ell_*(\phi_{t_n}^{\epsilon_n})\ell_*(W_n^{-1})).$$

Now suppose $W_1\neq1$, and let the first letter of $W_1$ be $\phi_{f_1}^{e_1}$. Then we can insert the pair $(\ell_*(\phi_{f_1}^{e_1}),\ell_*(\phi_{f_1}^{-e_1}))$ into $\Sigma$ at the first position and perform an exchange operation to obtain
$$(\ell_*(\phi_{f_1}^{e_1}),\ell_*(W_1')\ell_*(\phi_{t_1}^{\epsilon_1})\ell_*(W_1'^{-1}),\ell_*(\phi_{f_1}^{-e_1}),\ell_*(W_2)\ell_*(\phi_{t_2}^{\epsilon_2})\ell_*(W_2^{-1}),\ldots ),$$
where $\phi_{f_1}^{e_1}W_1'=W_1$, and $W_1'$ is shorter than $W_1$. Iterating this procedure and applying it to all $W_i$ yields a sequence $\Sigma'$ of the form
$$\Sigma'=(\ell_*(\phi_{g_1}^{h_1}),\ldots ,\ell_*(\phi_{g_N}^{h_N})),$$
where each $h_i\in\{\pm 1\}$. Since $1=\phi_{g_1}^{h_1}\ldots \phi_{g_N}^{h_N}$ in $\pi^1(\Gamma,\nu)$ and since a free reduction on the right hand side of this equation corresponds to a deletion operation in $\Sigma'$, we see that $\Sigma'$ can be transformed into the empty sequence by deletion operations. 
\end{proof}

\begin{lem}
Let $\Gamma$ be a $C(6)$-labelled graph. Let $R$ be the set of cyclic reductions of words read on free generating sets of the fundamental groups of the connected components of $\Gamma$. Then any identity sequence over $R$ is trivial.
\end{lem}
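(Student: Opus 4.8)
The plan is to reduce everything, by induction on the number of faces, to Lemma~\ref{lem:graphical_basic_spherical} and Lemma~\ref{lem:asphericity}, using the derived-sequence calculus of \cite{CH}. First I record a reduction that will be used repeatedly. Write $R=\bigsqcup_{i}R_i$, where $R_i$ is the set of cyclic reductions of words read on a free generating set of $\pi_1(\Gamma_i,\nu_i)$. Suppose $\Delta$ is a spur-free simple disk (or simple spherical) subdiagram all of whose interior edges originate from $\Gamma$. Its dual (face-adjacency) graph is connected, and two faces sharing an interior edge have lifts to $\Gamma$ that agree along that edge; hence all faces of $\Delta$ lift to one and the same connected component $\Gamma_i$. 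Moreover a relator of $R$ coming from $\Gamma_{i'}$ cannot be read on a cycle of $\Gamma_i$ when $i'\neq i$: such a cycle would be a single piece, contradicting $C(2)$. Thus $\Delta$ is in fact a diagram over $R_i$, and $\Gamma_i$ is a connected $C(6)$- (hence $C(2)$-) labelled graph, so if $\Delta$ has freely trivial boundary word then Lemma~\ref{lem:asphericity} gives that its derived sequence is a trivial identity sequence.

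Next I claim that every simple spherical diagram $D$ over $R$ has a trivial derived sequence, and prove this by induction on the number of faces of $D$ (a simple spherical diagram has at least one face). Apply Lemma~\ref{lem:graphical_basic_spherical}. If all edges of $D$ originate from $\Gamma$, unglue a single edge $e$ of $D$: this turns the $2$-sphere into a disk and yields a simple disk diagram $\hat D$ over $R$ with freely trivial boundary word $\ell(e)\ell(e)^{-1}$, all of whose interior edges originate from $\Gamma$. By the reduction above its derived sequence is trivial; since a derived sequence of $D$ is by definition obtained by ungluing an edge, the derived sequence of $\hat D$ is a derived sequence of $D$, and by \cite[Corollary 1 of Proposition 8]{CH} every derived sequence of $D$ is then trivial. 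Otherwise $D$ contains a simple disk subdiagram $\Delta$ with at least one face, freely trivial boundary word, and interior edges originating from $\Gamma$; its derived sequence is trivial by the reduction. The simple closed curve $\partial\Delta$ splits the sphere into $\Delta$ and a complementary disk subdiagram $\Delta'$, again over $R$ and with freely trivial boundary word, and a derived sequence of $D$ based at a vertex of $\partial\Delta$ is equivalent, up to conjugation, to the concatenation of derived sequences of $\Delta$ and of $\Delta'$, hence to a derived sequence of $\Delta'$. Sewing up the boundary of $\Delta'$ yields a spherical diagram over $R$ with the same derived sequence and strictly fewer faces than $D$; decomposing it into simple spherical pieces, each with strictly fewer faces than $D$, and invoking the inductive hypothesis shows that the derived sequence of $\Delta'$, and hence of $D$, is trivial.

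Finally, let $\Sigma$ be an arbitrary identity sequence over $R$ and let $D^\Sigma$ be an associated spherical diagram, of which $\Sigma$ is a derived sequence. The diagram $D^\Sigma$ decomposes into simple spherical pieces joined along a tree of arcs (spurs may first be folded away without affecting derived sequences), so a derived sequence of $D^\Sigma$ is equivalent to a concatenation of conjugates of derived sequences of these pieces; each of the latter is trivial by the previous paragraph, so the derived sequence of $D^\Sigma$ is trivial. By \cite[Corollary 1 of Proposition 8]{CH}, $\Sigma$ is therefore trivial as well.

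I expect the main technical obstacle to be the bookkeeping with derived sequences under the surgeries used above — ungluing an edge of a sphere, cutting along $\partial\Delta$, and sewing up $\Delta'$ — namely, verifying that each of these alters the derived sequence only up to equivalence and conjugation, so that the induction on the number of faces of a spherical diagram closes cleanly. The small-cancellation input itself, Lemma~\ref{lem:graphical_basic_spherical} together with the curvature formula it rests on, is already in hand.
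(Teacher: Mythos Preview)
Your argument is correct and follows essentially the same route as the paper: both reduce to Lemma~\ref{lem:graphical_basic_spherical} and then invoke Lemma~\ref{lem:asphericity} on the resulting simple disk subdiagram (after observing, as you do explicitly, that all its faces lift to a single component $\Gamma_i$). The only difference is cosmetic: the paper takes a minimal-length counterexample sequence and derives a contradiction from the fact that removing the trivial initial subsequence $\sigma$ yields a shorter sequence, whereas you phrase the same descent as an induction on the number of faces of a simple spherical diagram; since the length of a derived sequence equals the number of faces, these are the same argument.
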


\begin{proof} Let $\Sigma$ be a nontrivial identity sequence over $R$ of minimal length. Then there is an associated spherical diagram $D$. We may restrict ourselves to the case that $D$ is a simple spherical diagram, as the general case can be constructed from this. Lemma \ref{lem:graphical_basic_spherical} implies that all edges of $D$ originate from $\Gamma$, or that $D$ has a subdiagram $f$ that is a simple disk diagram with at least one face and with freely trivial boundary word, and all interior edges of $f$ originate from $\Gamma$. Assume the latter. Note that the 1-skeleton of $f$ maps to a connected component of $\Gamma$.

Changing the base point of a diagram corresponds to a conjugation of all elements of the derived sequence with an element $g\in F(S)$. A sequence is equivalent to a shorter sequence if and only if its conjugate is. Hence we can assume that the base point $v$ lies in the boundary of $f$.

We now associate to $(D,v)$ a derived sequence $\Sigma'$ that has an initial subsequence $\sigma$ that is a derived sequence for $(f,v)$: We cut $f$ out of $D$ and glue $f$ and $D\setminus f$ together at the point $v$ such that the boundary labels of $f$ and $D\setminus f$ read from $v$ are inverse. Let $\Sigma'$ be the concatenation of a sequence $\sigma$ for $f$ and one for $D\setminus f$. Since $f$ has at least one face, $\sigma$ is nonempty. Note that $\Sigma$ and $\Sigma'$ have the same length. By Lemma \ref{lem:asphericity}, $\sigma$ reduces to the trivial sequence, and therefore $\Sigma'$ is equivalent to a shorter sequence. Thus the minimality assumption on $\Sigma$ yields that $\Sigma'$ is trivial. Now, as mentioned before, \cite[Corollary 1 to Proposition 8]{CH} implies that $\Sigma$ is trivial, a contradiction.

If all edges of $D$ originate from $\Gamma$, we can unglue two faces along any edge of $D$ to obtain a simple disk diagram with freely trivial boundary word that globally lifts to $\Gamma$. Lemma \ref{lem:asphericity} yields that any derived sequence is trivial, and therefore $\Sigma$ is trivial.
\end{proof}

\begin{proof}[Proof of Lemma \ref{lem:infinite_presentation}] Let $R$ be as in Theorem \ref{thm:asphericity}. Assume there is $r\in R$ such that $r\in\langle\langle R\setminus \{r\}\rangle\rangle$. Then there exists a diagram for $r$ over $R\setminus \{r\}$ and hence a spherical diagram $D$ over $R$ with exactly one face labelled by $r$. There exists a derived sequence for $D$ containing a conjugate of $r$, but (using Lemma \ref{lem:concise}) no conjugate of $r^{-1}$. Therefore the sequence is not trivial, which is a contradiction.
\end{proof}

\section{Free subgroups}

We prove that graphical $C(7)$ groups are either cyclic, or they have non-abelian free subgroups. 

\begin{thm}\label{thm:free}
 Let $\Gamma$ be a $C(7)$-labelled graph. Then $G(\Gamma)$ contains a non-abelian free subgroup unless it is trivial or infinite cyclic.
\end{thm}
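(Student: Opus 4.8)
The plan is to mimic the standard proof that non-elementary hyperbolic groups — and more generally groups acting suitably on spaces with small cancellation geometry — contain non-abelian free subgroups, but to work directly with van Kampen diagrams over $\Gamma$ so that the argument applies even when $G(\Gamma)$ is infinitely presented. The first step is to reduce to the case where $G(\Gamma)$ is ``large enough'': I would like to produce two elements $g,h\in G(\Gamma)$ whose axes (in the sense of bi-infinite geodesic words on which no long piece occurs) are ``transverse'' — that is, $g$ and $h$ generate a free group because any reduced word in $g^{\pm1},h^{\pm1}$ admits only a bounded-area diagram, hence is nontrivial. Concretely, I expect to choose $g$ and $h$ to be labels of long paths in $\Gamma$ (or suitable concatenations thereof) that are geodesic and that overlap only in short pieces both with themselves and with each other; the $C(7)$ hypothesis, via Corollary~\ref{cor:graphical_basic} and Area theorem~II, forces any van Kampen diagram for a putative relation among such words to have too few interior edges on its boundary faces, contradicting the first curvature formula unless the diagram is trivial.

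The key technical tool is the $(3,7)$ structure of minimal diagrams from Corollary~\ref{cor:graphical_basic}: any minimal diagram over $\Gamma$ is a $(3,7)$-diagram all of whose faces are simply connected, and by the first curvature formula such a diagram with at least two faces has boundary faces $f$ with $i(f)\le 3$, i.e.\ with at least a definite fraction of $\partial f$ lying on $\partial D$. The standard small-cancellation ``ping-pong via diagrams'' argument then runs as follows. First I would handle the degenerate cases: if $G(\Gamma)$ is neither trivial nor infinite cyclic, then (using Remark~\ref{rem:obvious}, or a direct argument) $\Gamma$ contains enough structure — either a component with non-cyclic fundamental group image, or at least two ``independent'' cycles/paths — to extract two words $u,v$ in $F(S)$ that are not proper powers modulo $G(\Gamma)$ and whose ``pieces'' with each other are short compared with $|u|,|v|$. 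Then, replacing $u,v$ by high powers $g=u^N$, $h=v^N$ for $N$ large (so that any subword of $g$ or $h$ of length comparable to a relator-cycle sees only a tiny $C(7)$-piece), I would show by induction on the length of a reduced word $w=w(g,h)$ that $w\ne1$ in $G(\Gamma)$: a minimal diagram $D$ for $w$ over $\Gamma$, being a $(3,7)$-diagram, would have a boundary face $f$ reading a long subword of $w$ along $\partial D$; but that subword is a long power-subword of $g$ or $h$, which can only be a relator (or overlap a relator in a long piece) if a $C(7)$-violation occurs, so $D$ has no faces at all, whence $w$ is freely reduced and nontrivial — giving $\langle g,h\rangle\cong F_2$.

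The main obstacle I anticipate is the construction step, not the diagram estimate: producing the two ``independent'' words $u,v$ with controlled pieces requires understanding exactly when $G(\Gamma)$ fails to be cyclic in terms of the combinatorics of $\Gamma$. One delicate point is that a single long path in $\Gamma$ need not give an element of infinite order, and two long paths in the same component may well represent commuting or proportional elements; so I expect to argue that the only way to avoid finding a transverse pair is for $\Gamma$ to be essentially a single cycle or a tree (up to the identifications forced by the labelling), which is precisely the trivial/infinite-cyclic case. I would likely first establish, as a lemma, that if $G(\Gamma)$ is generated (as a normal-closure quotient) by something that is not virtually cyclic then there exist a vertex $x$ of $\Gamma$ and two reduced closed or long paths at $x$ whose labels $u,v$ satisfy $|{\rm piece}|<\frac17\min(|u|,|v|)$; granting this, passing to powers and running the curvature argument above is routine. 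A secondary subtlety is making sure the ``piece with $w$ along $\partial D$'' is genuinely long: this needs the $C(7)$ condition rather than $C(6)$, exactly because Area theorem~II (linear isoperimetry) gives the needed control on how much of each boundary face is exposed, whereas $C(6)$ only yields the quadratic bound, which is too weak to rule out all diagrams for arbitrarily long reduced words in $g,h$.
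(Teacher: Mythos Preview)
Your proposal has a genuine gap in the central step, and the difficulty is precisely the one you flag but do not resolve: the construction of $u,v$ and the passage to high powers does \emph{not} create the obstruction you need. If $u$ is the label of a closed path in $\Gamma$, then $u^N$ is again the label of a closed path in $\Gamma$ (traversing the same cycle $N$ times), hence is itself a relator; more generally, arbitrarily long subwords of $u^N$ can be labels of paths in $\Gamma$ and therefore can appear as long exterior arcs of boundary faces without producing any $C(7)$-violation. So the sentence ``that subword is a long power-subword of $g$ or $h$, which can only be a relator \dots\ if a $C(7)$-violation occurs'' is simply false in the graphical setting. Taking powers is exactly the wrong move here: it keeps you inside $\Gamma$ rather than forcing you out of it. Relatedly, your proposed lemma with $|{\rm piece}|<\tfrac17\min(|u|,|v|)$ is a \emph{metric} statement, whereas $C(7)$ is non-metric and gives no control on piece lengths; and since relators can be arbitrarily long in the infinitely presented case, a single boundary face can absorb an arbitrarily long portion of $\partial D$ regardless of how large $N$ is.

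The paper's proof proceeds quite differently and the construction is the heart of it. After reducing (via Tietze moves) to the case where every edge is a piece, one introduces the \emph{piece distance} $d_p$ on each component and shows (Lemma~\ref{lem:free1}) that on any simple cycle there are vertices at $d_p$-distance $3$ or $4$ from a given vertex, with a notion of ``opposite edge'' in the borderline case. Using that $\Gamma$ contains infinitely many pairwise vertex-disjoint simple cycles (Lemma~\ref{lem:free2}), one picks four such cycles and builds $\alpha,\beta$ as concatenations of path-labels joining $d_p$-far vertices on \emph{different} vertex-disjoint cycles; the disjointness is what guarantees that the transition between the two halves of $\alpha$ (resp.\ $\beta$) cannot lift to $\Gamma$. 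The freeness proof then adds $\alpha,\beta$ as new generators via Tietze transformations, analyses diagrams containing the resulting ``special'' faces, and shows by a curvature argument --- with a separate ``bottle-neck'' analysis when interior edges originating from $\Gamma$ persist --- that every special face contributes interior degree at least $4$, so a reduced word in $\alpha,\beta$ cannot bound a $(3,7)$-diagram. No powers are taken anywhere; the leverage comes entirely from piece-distance and vertex-disjointness of the chosen cycles.
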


\begin{cor}
 Graphical $C(7)$ groups have exponential growth and are non-amenable, unless they are trivial or infinite cyclic.
\end{cor}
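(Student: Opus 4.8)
The statement is an immediate consequence of Theorem~\ref{thm:free} together with two standard facts about the rank-two free group $F_2$: it is non-amenable, and it has exponential growth. So the plan is simply to split into the two cases provided by Theorem~\ref{thm:free} and then invoke these facts, the only mildly delicate point being the comparison of the growth of a subgroup with the growth of the ambient group.

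If $G(\Gamma)$ is trivial or infinite cyclic there is nothing to prove, these being exactly the cases excluded in the statement (and indeed both are amenable and of at most linear growth, so the ``unless'' is sharp). Otherwise Theorem~\ref{thm:free} supplies a subgroup $H\le G(\Gamma)$ with $H\cong F_2$, and I would argue the two conclusions separately. \emph{Non-amenability:} amenability is inherited by subgroups, so if $G(\Gamma)$ were amenable then $H\cong F_2$ would be amenable, which it is not; hence $G(\Gamma)$ is non-amenable. \emph{Exponential growth:} $G(\Gamma)$ is finitely generated (by the finite set $S$), and so is $H$; fixing a finite generating set $T$ of $H$, the finite set $S\cup T$ generates $G(\Gamma)$, and every $h\in H$ satisfies $|h|_{S\cup T}\le |h|_{T}$ because a $T$-word representing $h$ is also an $(S\cup T)$-word. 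Consequently the ball of radius $n$ in $(H,T)$ is contained in the ball of radius $n$ in $(G(\Gamma),S\cup T)$, so the growth function of the latter is bounded below by that of $(H,T)$; since $F_2$ has exponential growth and the growth type of a finitely generated group is independent of the chosen finite generating set, $G(\Gamma)$ has exponential growth.

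The only step calling for any care is the ball comparison in the exponential-growth part, and this is handled by the harmless device of enlarging the generating set of $G(\Gamma)$ to contain one of $H$; after that the inclusion of balls is immediate, so there is no genuine obstacle. All of the substance of the corollary is contained in Theorem~\ref{thm:free}, which is taken as given here.
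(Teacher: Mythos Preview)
Your proposal is correct and matches the paper's approach: the paper states the corollary immediately after Theorem~\ref{thm:free} with no proof, treating it as an evident consequence of the existence of a non-abelian free subgroup together with the standard facts about $F_2$ that you spell out. Your added care about comparing balls via the enlarged generating set $S\cup T$ is a clean way to make the growth argument explicit, but nothing beyond what the paper implicitly assumes.
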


\begin{remark}\label{rem:obvious} We show that it is easy to check if $G(\Gamma)$ is trivial or infinite cyclic. First, either one of the curvature formulas implies that there is no diagram over $\Gamma$ whose boundary has length 1 and which has more than one face. Hence $G(\Gamma)$ is trivial if and only if it contains a loop of length 1 labelled with $s$ for each $s\in S$.

Second, we check when an infinite cyclic group can arise. From the labelled graph $\Gamma$, we construct a graph $\Gamma'$ by iteratively removing every edge that is contained in a simple cycle and that is not a piece. We also remove the letters labelling those edges from the alphabet $S$ to obtain the alphabet $S'$. Then the group defined by $\Gamma'$ over the alphabet $S'$ is isomorphic to $G(\Gamma)$ since the changes we made correspond to Tietze transformations. If $\Gamma'$ is a forest, then the group defined by $\Gamma$ is the free group on $S'$. Assume this is not the case. Then $\Gamma'$ has girth at least $7$.

Assume $G(\Gamma)$ is cyclic and hence abelian, and let $a,b\in S'$ with $a\neq b$. Then any diagram for the word $aba^{-1}b^{-1}$ over $\Gamma'$ has more than one face. Since there are only 4 boundary edges and the girth of $\Gamma'$ is at least 7, any face has at least 3 interior arcs. Therefore, the first curvature formula is violated. 

We deduce that $G(\Gamma)$ is infinite cyclic if and only if $|S'|= 1$. Note that in this case, $\Gamma'$ is a forest, and the free rank of the free product of the fundamental groups of the connected components of $\Gamma$ is $|S|-1$.
\end{remark}

While, by Example \ref{ex:cayley}, the above theorem cannot hold for arbitrary $Gr(7)$ groups, our method of proof yields the following special case:

\begin{thm}\label{thm:ess-free}
 Let $\Gamma$ be a $Gr(7)$-labelled graph with infinitely many pairwise non-isomorphic connected components with non-trivial fundamental groups. Then $G(\Gamma)$ contains a non-abelian free subgroup.
\end{thm}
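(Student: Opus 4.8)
The plan is to produce two elements $g_1,g_2\in G(\Gamma)$ generating a free group of rank $2$, by a direct van Kampen diagram argument parallel to the proof of Theorem~\ref{thm:free}. I would first record the observation that makes the hypothesis work: if $\Gamma_1,\Gamma_2$ are non-isomorphic connected components of $\Gamma$, then no automorphism of $\Gamma$ carries $\Gamma_1$ onto $\Gamma_2$, so \emph{any} labelled path admitting a map into $\Gamma_1$ and a map into $\Gamma_2$ is automatically an essential piece. Hence, choosing for each of our (infinitely many) pairwise non-isomorphic components $\Gamma_i$ with $\pi_1(\Gamma_i)\neq 1$ a simple non-$0$-homotopic cycle $\gamma_i$ with cyclically reduced label $r_i$ (a genuine relator, $r_i\neq 1$ in $F(S)$ since the labelling is reduced), I obtain infinitely many pairwise ``independent'' relators: a simple cycle of $\Gamma_i$ cannot carry the same label as a simple cycle of $\Gamma_j$ for $i\neq j$ (that would be a nontrivial closed path consisting of a single essential piece, contradicting $Gr(7)$), and any common subpath of $\gamma_i$ and $\gamma_j$ is an essential piece, hence by $Gr(7)$ applied to $\gamma_i$ occupies only a bounded part of the essential-piece decomposition of $\gamma_i$. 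Since any individual diagram is finite, only finitely many relators ever intervene, so the presentation being infinite causes no difficulty.

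Next I would construct $g_1,g_2$. Pick two such components $\Gamma_1,\Gamma_2$; write each simple cycle as $\gamma_i=\alpha_i\beta_i$, where $\alpha_i$ is an initial subpath comprising strictly more than half of the (at least $7$) arcs of a fixed decomposition of $\gamma_i$ into essential pieces, and set $g_i:=\ell(\alpha_i)\in G(\Gamma)$. First one checks $g_i\neq 1$ by a short spherical-diagram argument in the style of Lemma~\ref{lem:graphical_basic_spherical} (if $\ell(\alpha_i)=1$ then $\ell(\beta_i)=1$ as well, and gluing the resulting disk diagrams to the single-face disk for $r_i$ would produce a forbidden $[3,7]$-spherical diagram). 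Then one shows each $g_i$ has infinite order: if $g_i^n=1$, then by Corollary~\ref{cor:graphical_basic} and Lemma~\ref{lem:graphical_simple} a cyclically reduced word for $g_i^n$ bounds a $(3,7)$-diagram $D$ over $\Gamma$, and a Greendlinger-type consequence of the curvature formulas for $(3,7)$-diagrams (cf.\ ``Area theorem~II'' and \cite{Str}) gives a boundary face whose label exhibits a subpath spanning more than half of the arcs of a simple cycle as a subword of $(g_i^n)^{\mathrm{cyc}}$; since $\alpha_i$ is more than half of the simple cycle $\gamma_i$ and $\gamma_i$ is independent of every other relator, that cycle must be an essentially unique rotation of $\gamma_i^{\pm1}$, and absorbing that face into the boundary contradicts minimality unless $n=0$. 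A variant of the same analysis, using that $\gamma_1$ and $\gamma_2$ sit on non-isomorphic components, shows that no nontrivial power of $g_1$ is conjugate into $\langle g_2\rangle$.

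With $g_1,g_2$ in hand I would run the diagram (ping-pong–type) argument for freeness. Suppose $W=g_{i_1}^{\varepsilon_1}\cdots g_{i_m}^{\varepsilon_m}$ is a nonempty reduced word in $g_1^{\pm1},g_2^{\pm1}$ that is trivial in $G(\Gamma)$; represent $W$ by the concatenation of the syllables $\ell(\alpha_{i_k})^{\varepsilon_k}$ (freely reducing at the junctions), and take a diagram $D$ for this word as in Lemma~\ref{lem:graphical_simple}, so that $D$ is a $(3,7)$-diagram all of whose faces bear labels of simple cycles. By ``Curvature formula~I'' (or ``Area theorem~II'') some boundary face $f$ has at most three interior edges, hence its label contains, as a subword of the boundary word, a chunk spanning several consecutive arcs of a simple cycle, amounting to more than half of $\partial f$. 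Because each $\alpha_i$ is more than half of $\gamma_i$ and the $\gamma_i$ are pairwise independent, such a chunk cannot straddle a junction of two syllables, and it cannot lie inside a single $g_i^{\pm1}$-syllable without forcing $\partial f$ to be an essentially unique rotation of $\gamma_i^{\pm1}$; the latter lets one remove $f$ and shorten $W$, contradicting reducedness and minimality. Hence $W\neq 1$, so $\langle g_1,g_2\rangle\leq G(\Gamma)$ is free of rank $2$.

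The main obstacle is the combinatorial bookkeeping in all the ``more than half'' steps. The $Gr(7)$ condition controls essential pieces only by \emph{counting} (every nontrivial cycle is a concatenation of at least $7$ essential pieces), not metrically, so—unlike in the $Gr'(\tfrac16)$ case of \cite{Oll}—one cannot compare word lengths and must phrase Greendlinger's lemma and the choice of $\alpha_i$ entirely in terms of arcs of the decompositions produced by the curvature formulas and their refinements (\cite{Str}, \cite{MW}). A secondary subtlety is that $Gr(7)$ permits relators that are proper powers and components with internal symmetries, so the cycles $\gamma_i$ (and which components one uses) must be chosen so the ``essentially unique rotation'' conclusions genuinely hold; this is precisely where one exploits having infinitely many pairwise non-isomorphic components with nontrivial $\pi_1$, as it furnishes an ample reservoir of relators each independent of all the others in the essential-piece sense.
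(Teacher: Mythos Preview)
Your strategy diverges substantially from the paper's, and the divergence introduces a genuine gap. The paper does not attempt a Greendlinger-style ``remove a face and shorten the boundary word'' reduction; it adapts the proof of Theorem~\ref{thm:free} verbatim to the $Gr(7)$ setting (pieces $\to$ essential pieces, vertex-disjoint $\to$ essentially vertex-disjoint). That proof uses \emph{four} cycles and the piece-distance metric of Lemma~\ref{lem:free1}: one sets $\alpha:=w_1w_2$, $\beta:=w_3w_4$, where each $w_i$ is the label of a path between two vertices at (essential) piece distance $4$ on a distinct cycle, then adds $\alpha,\beta$ as generators via Tietze transformations and argues that in any diagram for a reduced word in $\alpha,\beta$ every boundary face has interior degree $\geq 4$, contradicting the first curvature formula. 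The concatenation of two $w_i$'s per generator is essential: it guarantees that consecutive special faces always come from disjoint cycles, so the arc between them is an essential piece and contributes to interior degree.

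Your single-syllable-per-generator construction lacks this, and the reduction you propose does not close the argument. First, ``strictly more than half of a fixed decomposition'' is weaker than piece distance: if the fixed decomposition of $\gamma_i$ has $n\gg 7$ arcs, your $\alpha_i$ may still be a concatenation of only two or three essential pieces, since $Gr(7)$ bounds decompositions of the \emph{closed} cycle, not of $\alpha_i$. Second, even granting that the exterior arc of the boundary face $f$ lies inside one syllable $\alpha_i$ and that the two lifts to $\Gamma_i$ essentially coincide, $\partial f$ need not be a rotation of $\gamma_i$: it can be any simple cycle of $\Gamma_i$ through that arc. Third, and most seriously, ``remove $f$ and shorten $W$'' has no meaning in the non-metric $Gr(7)$ setting: removing $f$ replaces the exterior arc by the complementary arc of $\partial f$, which may be longer in edge-length (only the \emph{number of essential pieces} is controlled), and the new boundary word is no longer of the form $g_{i_1}^{\varepsilon_1}\cdots g_{i_m}^{\varepsilon_m}$, so there is no descent. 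The paper's Tietze-transformation device and its use of piece distance and four essentially disjoint cycles are precisely what avoid these difficulties.
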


\begin{cor}\label{cor:classical-free}
 Let $G$ be a group with an infinite classical $C(7)$ presentation. Then $G$ contains a non-abelian free subgroup.
\end{cor}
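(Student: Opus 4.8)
The plan is to deduce Corollary \ref{cor:classical-free} directly from Theorem \ref{thm:ess-free} by constructing, from a given infinite classical $C(7)$ presentation $\langle S \mid R\rangle$, a $Gr(7)$-labelled graph $\Gamma$ with the required combinatorial features. As explained in the introduction, one forms $\Gamma$ as the disjoint union of cycle graphs, one for each relator $r \in R$, where the cycle of length $|r|$ is labelled so that reading around it spells $r$; the classical $C(7)$ condition on $\langle S \mid R\rangle$ then gives that $\Gamma$ is $Gr(7)$-labelled (indeed $C(7)$-labelled, up to the usual caveat about proper powers, but $Gr(7)$ is all we need). By construction $G(\Gamma)$ is the group presented by $\langle S \mid R\rangle$, so it suffices to check that $\Gamma$ has infinitely many pairwise non-isomorphic connected components with nontrivial fundamental groups.

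First I would handle the fundamental groups: each connected component is a cycle graph on $|r| \geq 1$ vertices, so $\pi_1$ is infinite cyclic, hence nontrivial, provided no relator is the empty word — which we may assume, as an empty relator would make the presentation degenerate and in any case can be discarded without changing the group. Next I would address the ``infinitely many pairwise non-isomorphic components'' requirement. A cycle graph is determined up to isomorphism of labelled graphs by the cyclic word it carries (up to cyclic permutation and inversion), and two components are isomorphic as \emph{unlabelled} graphs exactly when the corresponding relators have the same length. So the potential obstacle is that the presentation might use infinitely many relators but only finitely many distinct relator lengths — e.g., infinitely many relators all of some fixed length $k$. The key observation resolving this is that for a fixed finite alphabet $S$ and a fixed length $k$, there are only finitely many freely reduced words of length $k$ over $S \sqcup S^{-1}$, hence only finitely many distinct relators of length $k$ in a $C(7)$ (in particular, reduced) presentation. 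Therefore an infinite set $R$ of relators must realize infinitely many distinct lengths, so among the components of $\Gamma$ infinitely many are pairwise non-isomorphic even as unlabelled graphs, a fortiori as labelled graphs.

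With these two points in place, Theorem \ref{thm:ess-free} applies to $\Gamma$ and yields a non-abelian free subgroup of $G(\Gamma) \cong G$. The only mild subtlety I anticipate is the bookkeeping around trivial or duplicate relators: one should first replace $R$ by the set of cyclic reductions of its nontrivial elements and delete repetitions, noting this changes neither the group nor the $C(7)$ property, so that the resulting graph genuinely has infinitely many components with nontrivial $\pi_1$. This is routine; I expect the substantive content to be entirely the finiteness-of-words counting argument, and there is no real obstacle beyond stating it cleanly.

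\begin{proof}[Proof of Corollary \ref{cor:classical-free}]
Let $\langle S \mid R\rangle$ be an infinite classical $C(7)$ presentation of $G$, where $S$ is finite. Replacing $R$ by the set of cyclic reductions of its elements and discarding the empty word and repetitions changes neither $G$ nor the $C(7)$ property, so we may assume every element of $R$ is a nonempty cyclically reduced word and that distinct elements of $R$ are distinct as cyclic words; $R$ is still infinite.

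For each $r \in R$, let $C_r$ be a cycle graph with $|r|$ edges, labelled by letters of $S$ so that reading the labels around $C_r$ (from some vertex, in some direction) spells $r$; since $r$ is cyclically reduced, this labelling is reduced. Let $\Gamma := \bigsqcup_{r \in R} C_r$. By the remark following Definition \ref{defi:graphical2}, the classical $C(7)$ condition on $\langle S \mid R\rangle$ implies that $\Gamma$ is $Gr(7)$-labelled, and by Definition \ref{defi:groupdefinedbygraph} we have $G(\Gamma) \cong G$.

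Each connected component $C_r$ of $\Gamma$ is a cycle graph on $|r| \geq 1$ edges, so $\pi_1(C_r) \cong \Z$ is nontrivial. It remains to see that $\Gamma$ has infinitely many pairwise non-isomorphic components. For each fixed $k \in \N$, there are at most $(2|S|)^k$ words of length $k$ over $S \sqcup S^{-1}$, hence only finitely many relators of length $k$ in $R$. Since $R$ is infinite, the set $\{|r| : r \in R\}$ of relator lengths is infinite. As $C_r$ and $C_{r'}$ are isomorphic as (labelled) graphs only if $|r| = |r'|$, we conclude that $\Gamma$ has infinitely many pairwise non-isomorphic connected components, each with nontrivial fundamental group.

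By Theorem \ref{thm:ess-free}, $G(\Gamma) \cong G$ contains a non-abelian free subgroup.
\end{proof}
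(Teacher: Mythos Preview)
Your proof is correct and follows exactly the route the paper intends: the corollary is stated without proof because it is immediate from Theorem \ref{thm:ess-free} via the construction, described after Definition \ref{defi:graphical2}, of a $Gr(7)$-labelled graph from a classical $C(7)$ presentation. Your counting argument that an infinite relator set over a finite alphabet must realize infinitely many lengths is a clean way to verify the ``infinitely many pairwise non-isomorphic components'' hypothesis; one could also note directly that distinct cyclic words (up to inversion) give non-isomorphic labelled cycles, but your version works just as well.
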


\begin{proof}[Proof of Theorem \ref{thm:free} in the finitely presented case]
If $\Gamma$ has finitely \linebreak many distinct simple cycles, Theorem \ref{thm:linear_isoperimetry} and Theorem \ref{thm:asphericity} imply that the group is hyperbolic and torsion-free. It is well-known (cf. \cite[$5.3.\mathrm{C}_1$]{Grhyp}) that non-elementary hyperbolic groups contain non-abelian free subgroups. Any torsion-free elementary hyperbolic group is either trivial or infinite cyclic. Therefore, the claim holds in this case.
\end{proof}

We now make preliminary observations and definitions for the proof of Theorem \ref{thm:free} in the remaining case. The proof of Theorem \ref{thm:ess-free} will be deduced from this proof at the end of the section.

Let $\Gamma$ be a labelled graph. Suppose there is an edge $e$ that is not a piece. Then the label $s$ of $e$ appears only once on the graph. If $e$ lies on a simple cycle, then removing $e$ from $\Gamma$ as well as $s$ from $S$ corresponds to a Tietze transformation and yields an isomorphic group. If $e$ does not lie on a simple cycle, consider the presentation $\langle S|R\rangle$ for $G(\Gamma)$,  where $R$ is the set of words read on simple cycles in $\Gamma$. Then $s$ appears in no word of $R$ and therefore generates a free factor. In this case, the claim of Theorem \ref{thm:free} holds. Therefore, it is no restriction to assume that all edges in $\Gamma$ are pieces, and we therefore do so until our proof is concluded.

Assume that all edges on the labelled graph $\Gamma$ are pieces. Let $x,y$ be vertices lying in a connected component of $\Gamma$. The \emph{piece distance} $d_p(x,y)$ is the least number of pieces whose concatenation is a path from $x$ to $y$. We observe that the restriction of $d_p$ to a connected component of $\Gamma$ is a metric.

\begin{lem}\label{lem:free1} Let $\gamma$ be a simple cycle on a $C(n)$-labelled graph $\Gamma$. Let $x$ be a vertex on $\gamma$. If $n$ is even, there exists a vertex $y$ on $\gamma$ such that $d_p(x,y)=\frac{n}{2}$. If $n$ is odd, one of the following holds:
 \begin{itemize}
  \item There is a vertex $y$ on $\gamma$ such that $d_p(x,y)=\frac{n+1}{2}$, or
  \item there are vertices $y$ and $z$ on $\gamma$ such that $d_p(x,y)=d_p(x,z)=\frac{n-1}{2}$ and $d(y,z)=1$. Any path starting at $x$ and traversing both $y$ and $z$ consists of at least $\frac{n+1}{2}$ pieces. We call the edge connecting $y$ and $z$ the \emph{opposite} edge to $x$.
 \end{itemize}
\end{lem}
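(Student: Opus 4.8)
The plan is to study how piece distance from $x$ grows as one walks along the simple cycle $\gamma$. Fix one of the two directed paths from $x$ around $\gamma$ back to $x$, and for each vertex $w$ on $\gamma$ let $g(w)$ denote $d_p(x,w)$. Since every edge of $\gamma$ is a piece (we are in the reduced setting where all edges are pieces), moving across a single edge changes $g$ by at most $1$; moreover $g(x)=0$. The key global constraint is the $C(n)$ condition applied to the simple cycle $\gamma$: since $\gamma$ is a nontrivial closed path, it cannot be written as the concatenation of fewer than $n$ pieces, so for \emph{any} vertex $w\ne x$ on $\gamma$, the two arcs of $\gamma$ joining $x$ to $w$ are each concatenations of pieces, and the sum of the minimal numbers of pieces needed to traverse these two arcs is at least $n$; in particular $d_p(x,w)$ computed along $\gamma$ in the two directions gives two numbers summing to at least $n$. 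I would first observe that the true piece distance $d_p(x,w)$ in the whole component is at most the smaller of these, but I will actually only need the function $g_\gamma(w)$ = least number of pieces along $\gamma$ from $x$ to $w$ in a fixed direction, and argue with that; the triangle inequality for $d_p$ then transfers the conclusion.

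The core of the argument is a discrete intermediate value statement. Walking once around $\gamma$ from $x$ back to $x$ in a fixed direction, record the sequence of vertices $x=w_0,w_1,\dots,w_m=x$. Define $a_i$ to be the number of pieces in a shortest piece-decomposition of the subpath $w_0w_1\cdots w_i$, and $b_i$ the number of pieces in a shortest piece-decomposition of the complementary subpath $w_iw_{i+1}\cdots w_m$. Then $a_0=0$, $b_0=0$ (interpreting the empty path), $a_m=0$, $b_m=0$ — wait, more carefully: $a_i$ is nondecreasing in $i$ with $a_i - a_{i-1}\in\{0,1\}$ because appending one edge adds at most one piece, and similarly $b_i - b_{i+1}\in\{0,1\}$; and the $C(n)$ condition gives $a_i + b_i \ge n$ for every $1\le i\le m-1$. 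Now I track the quantity $a_i$. It starts near $0$ and, since $b_i$ decreases by steps of at most $1$ while $a_i+b_i\ge n$ is maintained, $a_i$ must eventually reach at least $\lceil n/2\rceil$: indeed consider the first index $i$ at which $a_i \ge b_i$; just before that index we had $a_{i-1} < b_{i-1}$, and using $a_{i-1}+b_{i-1}\ge n$ together with $a_i\le a_{i-1}+1$ and $b_i\ge b_{i-1}-1$ one pins down $a_i$ and $a_{i-1}$. For $n$ even this forces some vertex $y=w_i$ with $a_i = n/2$, and then $d_p(x,y)\le n/2$ while the $C(n)$ condition forces $d_p(x,y)\ge n/2$ (any shorter piece-path from $x$ to $y$ together with either arc of $\gamma$ back to $x$ would be a nontrivial closed path made of fewer than $n$ pieces), giving $d_p(x,y)=n/2$. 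For $n$ odd the same bookkeeping shows either $a_i$ hits $(n+1)/2$ at some vertex $y$, giving the first alternative after the same lower-bound check, or else $a_i$ jumps from $(n-1)/2$ to $(n-1)/2$ is impossible and instead one finds consecutive vertices $y=w_{i-1}, z=w_i$ (joined by an edge, $d(y,z)=1$) with $a_{i-1}=b_i=(n-1)/2$ and $a_i=b_{i-1}=(n-1)/2$ as well; here $d_p(x,y)=d_p(x,z)=(n-1)/2$ by the same two-sided argument, and any path from $x$ through both $y$ and $z$ has a subpath that is an arc of $\gamma$ from $x$ to $y$ or to $z$ followed by the edge $yz$, whose complement in $\gamma$ forces, via $C(n)$, at least $(n+1)/2$ pieces.

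I would organize the writeup as: (1) reduce to the function "number of pieces along $\gamma$ in a fixed direction" and record its step-size-$\le 1$ property and the $a_i+b_i\ge n$ inequality from $C(n)$; (2) prove the discrete intermediate value claim isolating the critical index $i$; (3) in each parity case, identify $y$ (or $y,z$) and verify the matching \emph{lower} bound on $d_p$ using the defining property of $C(n)$ applied to a hypothetical shorter piece-path closed up with an arc of $\gamma$; (4) in the odd "opposite edge" case, verify the statement about paths traversing both $y$ and $z$. The main obstacle I anticipate is step (3)–(4): turning the upper bound "$a_i$ pieces along $\gamma$" into the exact value of $d_p(x,y)$ requires care, because $d_p$ is measured in the whole connected component, not just along $\gamma$, so one must genuinely invoke that a shortcut from $x$ to $y$ outside $\gamma$ would, concatenated with an arc of $\gamma$, yield a nontrivial closed path contradicting $C(n)$ — and one has to check that the resulting closed path is indeed nontrivial (not $0$-homotopic), which holds because $\gamma$ is simple and the shortcut is genuinely shorter. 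The parity bookkeeping in step (2) is routine once set up correctly but is the place where off-by-one errors are most likely, so I would state the inequalities $a_i - a_{i-1}\in\{0,1\}$, $b_{i-1}-b_i\in\{0,1\}$, $a_i+b_i\ge n$ explicitly and let them drive the case analysis.
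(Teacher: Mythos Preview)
Your approach is correct and is essentially the paper's, reorganized. The paper compresses your steps (1)--(4) into a single observation stated up front: for $d_p(x,y)<n/2$ there is a \emph{unique} simple path from $x$ to $y$ made of $d_p(x,y)$ pieces, since two distinct such paths would contain a simple cycle made of fewer than $n$ pieces, violating $C(n)$. With this in hand the paper simply takes $\gamma^+$ and $\gamma^-$ to be the maximal initial and terminal subpaths of $\gamma$ (based at $x$) expressible as fewer than $n/2$ pieces; the observation forces $\gamma^+\cap\gamma^-=\{x\}$, so $\gamma=\gamma^+\delta\gamma^-$ with $\delta$ nonempty, and an interior vertex of $\delta$ (for $n$ even, or $n$ odd with $\delta$ having at least two edges) furnishes $y$, while $\delta$ a single edge gives the opposite edge. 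Your sequences $a_i,b_i$ and the discrete intermediate-value step recover exactly the decomposition $\gamma^+\delta\gamma^-$, just more explicitly; what the paper's packaging buys is that the lower bound on $d_p$ becomes immediate rather than a separate verification.

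One point to tighten: your justification that the closed path ``shortcut $+$ arc'' is nontrivial (``because $\gamma$ is simple and the shortcut is genuinely shorter'') is not an argument as written---fewer pieces does not by itself imply non-homotopic. What you need is precisely the paper's observation: the shortcut, taken simple, has strictly fewer pieces than the chosen arc of $\gamma$ and hence is a different simple path; two distinct simple paths between the same endpoints always contain a simple cycle in their union, and that simple cycle inherits a piece-decomposition of length at most the sum. Isolating this as a lemma first, as the paper does, makes your step~(3) a one-liner and also cleans up the muddled case analysis you flag in the odd case.
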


\begin{proof}
We observe: For any $x,y$ in a connected component of $\Gamma$ with $d_p(x,y)=d<\frac{n}{2}$, there is a unique simple path from $x$ to $y$ consisting of $d$ pieces. 

Now assume that the simple cycle $\gamma$ is based at $x$. Let $\gamma^+$ be the longest initial subpath that can be made up of less than $\frac{n}{2}$ pieces, and let $\gamma^-$ be the longest terminal subpath with this property. Then, by our observation, $\gamma^+$ and $\gamma^-$ have only the vertex $x$ in common. We may write $\gamma=\gamma^+\delta\gamma^-$ for some nonempty simple path $\delta.$

Assume that $n$ is even. Then $\delta$ consists of at least 2 pieces and hence at least two edges. Any interior vertex $y$ of $\delta$ satisfies $d_p(x,y)\geq \frac{n}{2}$.

Assume that $n$ is odd. If $\delta$ consists of more than one edge, the argument above applies, and the first claim holds. If $\delta$ consists of exactly one edge, the second claim holds. 
\end{proof}

We now recall a basic fact about fundamental groups of graphs: Let $\Gamma$ be a connected graph and $T$ a spanning tree of $\Gamma$. Let $E$ denote the set of edges in $\Gamma$ but not in $T$. Then there is a bijective map from $E$ to a free generating set of $\pi_1(\Gamma,\nu)$ ($\nu$ arbitrary in $\Gamma$). In particular, $\pi_1(\Gamma,\nu)$ is not finitely generated if and only if $E$ is infinite. (For a proof, see \cite[Section 6]{KM}.)

\begin{lem}\label{lem:free2} Let $\Gamma$ be a graph of bounded vertex-degree containing infinitely many pairwise distinct simple cycles. Then $\Gamma$ contains infinitely many pairwise vertex-disjoint simple cycles.
\end{lem}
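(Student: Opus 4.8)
The plan is to extract the disjoint cycles greedily, using the bounded degree to control how much of the graph a single simple cycle can ``contaminate''. Let $d$ be the bound on the vertex-degrees. First I would observe that if $C$ is a simple cycle in $\Gamma$, then only finitely many simple cycles can share a vertex with $C$ \emph{and} be bounded in length: the point is rather that I want to pass to cycles of unbounded length. More precisely, I claim that among the infinitely many pairwise distinct simple cycles there exist cycles of arbitrarily large length. Indeed, in a graph of maximum degree $d$, the number of simple paths of length at most $L$ starting from a fixed vertex is at most $d^{L}$, so the number of simple cycles of length at most $L$ through a fixed vertex is finite; if all simple cycles had length at most some fixed $L_0$, then picking any vertex on each of the infinitely many cycles would still only give finitely many cycles once we note (again by the degree bound) that the whole subgraph they span is... — the cleanest formulation is: the set of simple cycles of length $\le L_0$ in a connected bounded-degree graph, taken up to the relation of lying in a common bounded ball, is finite, hence infinitely many distinct simple cycles forces cycles to be spread out or long. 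So I first reduce to the case where $\Gamma$ contains simple cycles of unbounded length.

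Next I carry out the greedy construction. Suppose we have already found pairwise vertex-disjoint simple cycles $C_1,\dots,C_k$. Let $V_k=V(C_1)\cup\dots\cup V(C_k)$, a finite set, and let $N_k$ be the set of vertices at distance $\le 1$ from $V_k$; this is still finite, of size at most $(d+1)|V_k|$. Any simple cycle that avoids $N_k$ is automatically vertex-disjoint from $C_1,\dots,C_k$ (in fact edge-disjoint too, but vertex-disjointness is what we need). So it suffices to produce one simple cycle avoiding $N_k$. Here is where I use that there are simple cycles of unbounded length: a simple cycle $C$ of length $\ell$ meeting $N_k$ contributes a point of $N_k$; but a long simple cycle need not avoid $N_k$, so instead I argue as follows. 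Remove the finite vertex set $N_k$ from $\Gamma$; the remaining graph $\Gamma\setminus N_k$ has only finitely many connected components that are \emph{not} trees (equivalently, have a cycle) only if... — the correct statement I want is that $\Gamma\setminus N_k$ still contains a cycle. This holds because $\Gamma$ contains infinitely many distinct simple cycles: deleting finitely many vertices can destroy only those simple cycles passing through $N_k$, and there are only finitely many simple cycles through any fixed finite vertex set in a bounded-degree graph (again the $d^{L}$ count, summed over the cycle lengths $L$; but cycles can be arbitrarily long!).

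The main obstacle, and the place the argument must be done carefully, is exactly this last point: in a bounded-degree graph there can be infinitely many simple cycles all passing through a single vertex only if they have unbounded length, so the naive ``finitely many cycles through $N_k$'' claim is false as stated. The fix is to use bounded degree differently: if \emph{every} cycle of $\Gamma$ passed through the finite set $N_k$, then $\Gamma$ would be obtained from a forest by attaching at vertices of $N_k$, so $\Gamma\setminus N_k$ is a forest; but then every simple cycle of $\Gamma$ uses at least one vertex of $N_k$ and, since it is simple, enters and leaves the ``forest part'' in a controlled way — and because $\Gamma\setminus N_k$ is a forest each simple cycle is determined by its (finite) trace on $N_k$ together with the finitely many tree-arcs between consecutive $N_k$-vertices, of which, by bounded degree and the fact that in a tree there is a \emph{unique} reduced path between two vertices, there are only finitely many. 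Hence $\Gamma$ would have only finitely many simple cycles, a contradiction. Therefore $\Gamma\setminus N_k$ contains a cycle, hence a simple cycle $C_{k+1}$, which by construction is vertex-disjoint from $C_1,\dots,C_k$. Iterating produces an infinite sequence $(C_k)_{k\in\N}$ of pairwise vertex-disjoint simple cycles, proving the lemma. I would write the key step — ``deleting a finite vertex set from a bounded-degree graph with infinitely many simple cycles leaves a cycle'' — as a short standalone claim, since the uniqueness of reduced paths in a tree is what makes the finiteness count go through despite unbounded cycle lengths.
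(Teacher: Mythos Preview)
Your core argument is correct: the key claim --- that if $F\subset V(\Gamma)$ is finite and $\Gamma\setminus F$ is a forest then $\Gamma$ has only finitely many simple cycles --- holds because each simple cycle is determined by its cyclic sequence of $F$-vertices together with the exit and entry neighbours at each of them (at most $d$ choices apiece), the unique tree path in $\Gamma\setminus F$ between those neighbours filling in the rest. Two cosmetic points: the first paragraph about unbounded cycle lengths is a false start you never actually use, and enlarging $V_k$ to its $1$-neighbourhood $N_k$ is unnecessary for vertex-disjointness; drop both and the proof is clean.

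The paper reaches the same conclusion by different bookkeeping. It assumes for contradiction that a maximal family $C$ of pairwise vertex-disjoint simple cycles is finite, fixes a spanning tree $T$ of (a connected component of) $\Gamma$, and notes that $T\setminus C$ has only finitely many components while infinitely many non-tree edges avoid the vertices of $C$ (both by bounded degree). Pigeonholing these edges against the components of $T\setminus C$ then produces either a non-tree edge with both endpoints in one component, or two non-tree edges joining the same pair of components; either way one reads off a simple cycle disjoint from $C$. So you replace the paper's global spanning tree and edge-pigeonhole by a direct finiteness count of cycles through the bottleneck $F$; your argument is a little more self-contained, while the paper's avoids any explicit counting.
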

\begin{proof} First assume that $\Gamma$ has infinitely many connected components that have nontrivial fundamental group. Then the claim is obvious. Therefore, we will assume that $\Gamma$ is connected.

We prove by contradiction. Let $C$ be a maximal set of pairwise disjoint simple cycles on $\Gamma$ and assume that $C$ is finite. Consider a spanning tree $T$ of $\Gamma$. Then there are infinitely many edges in $\Gamma$ that do not lie in $T$, and therefore, since $\Gamma$ is of bounded degree, $T$ is infinite. The forest $T-C$ (i. e. $T$ with all vertices and edges of elements of $C$ removed) has finitely many connected components. Moreover, there are infinitely many edges in $\Gamma$ that do not meet a vertex in $C$ and that do not lie in $T$. Thus one of the following must occur:
\begin{itemize}
 \item There exists a component $K$ of $T-C$ such that there is an edge in $\Gamma$ but not in $T$ and disjoint from $C$ that connects two vertices of $K$, or
 \item there exist two components $K$, $K'$ of $T-C$ such that there are two edges in $\Gamma$ but not in $T$ and disjoint from $C$ connecting vertices of $K$ and $K'$.
\end{itemize}
In both cases, we obtain a simple cycle that is vertex-disjoint from all elements of $C$. This contradicts the maximality of $C$.
\end{proof}

\begin{proof}[Proof of Theorem \ref{thm:free} in the infinitely presented case] 
We assume that the \\ free product of the fundamental groups of the components of $\Gamma$ is not finitely generated. The finitely generated case has been remarked upon.

First assume there are four pairwise vertex-disjoint simple cycles $\gamma_i$, $i\in\{1,2,3,4\}$, such that for each $i$, there are vertices $x_i$ and $y_i$ in $\gamma_i$ with $d_p(x_i,y_i)=4$ . For each $i$, let $w_i$ be the label of a path from $x_i$ to $y_i$. The method of proof below indicates how to show that $\alpha:=w_1w_2$ and $\beta:=w_3w_4$ generate a rank 2 free subgroup of $G(\Gamma)$.

Now assume the first assumption does not hold. We deduce from Lemmas \ref{lem:free1} and \ref{lem:free2} that there are infinitely many pairwise vertex-disjoint simple cycles on $\Gamma$ on each of which the maximal piece distance between two points is 3. Moreover, for any vertex in one of these cycles $\gamma$, there is an opposite edge on $\gamma$. Let $x_a,x_a',x_b,x_b'$ be vertices with opposite edges $a,a',b,b'$ in pairwise disjoint cycles. Since we are considering an infinite set of cycles labelled with a finite set of letters, we may assume that $\ell(a)=\ell(a')$ and $\ell(b)=\ell(b')$.

We add $\alpha,\beta$ to the alphabet $S$ and add the following relations to the presentation:
\begin{itemize}
\item $R_\alpha:=\{\alpha \red(\ell(p)^{-1}\ell(p'))|p:\iota(a)\to x_a,p':\iota(a') \to x_a'\},$
\item $R_\beta:=\{\beta  \red(\ell(p)^{-1}\ell(p'))|p:\iota(b)\to x_b,p':\iota(b') \to x_b'\},$
\end{itemize}
where $\red(-)$ denotes the free reduction, $p:w\to x$ denotes a path from $w$ to $x$, and $\iota(e)$ denotes the initial vertex of an oriented edge $e$. We call a face in a diagram bearing a relator in $R_\alpha\cup R_\beta$ \emph{special}. Let $R$ denote the set of labels of all nontrivial cycles on $\Gamma$. Then
$$G(\Gamma)=\langle S|R\rangle\cong \langle S\cup\{\alpha,\beta\}|R\cup R_\alpha\cup R_\beta\rangle,$$
since we only applied the Tietze transformations of adding generators. (Note that $\ell(p_1)=\ell(p_2)$ in $G$ for any two paths $p_1,p_2$ from vertices $x$ to $y$.) We show that $\alpha$ and $\beta$ generate a free subgroup of rank 2 in $G(\Gamma)$.

Let $D$ be a nontrivial diagram for a cyclically reduced word $\omega$ in $\alpha,\beta$ over the extended presentation. First, we can assume that no edge labelled $\alpha$ or $\beta$ is an interior edge: If there is such an interior edge, it can only occur when two special faces are glued together in such a way that the boundary read around these two faces can be obtained by gluing together two cycles lifting to $\Gamma$. Thus we can replace the two faces containing $\alpha$ (or $\beta$) by two that do not. 

Second, we choose $D$ to be of minimal area with this property and without interior spurs. Then the proofs of Lemma \ref{lem:graphical_basic} and Corollary \ref{cor:graphical_basic} yield that no interior face of $D$ contains an edge originating from $\Gamma$. (Here we associate to the interior edges of special faces the lifts that arise from the fact that they are labels of paths in $\Gamma$.) Hence any arc in the intersection of two interior faces is a piece.

We first cover the case that no interior edge originates from $\Gamma$. We construct from $D$ a diagram $D'$ that violates the first curvature formula by replacing each special face by two faces over the original presentation and performing some foldings. For the sake of brevity we describe the process for one special face $F$ whose label lies in $R_\alpha$:

The subpath of $\partial F$ that lies in the interior of $D$ comes from two reduced paths $p:\iota(a)\to x_a$ and $p':\iota(a')\to x_a'$, where a free reduction may have occurred corresponding to cutting off initial subpaths of $p$ and $p'$ that bear the same labels. 

We replace $F$ by two faces $f$ and $f'$ constructed as follows: $\partial f=\ell (p) \ell (q)$ and $\partial f'=\ell (p') \ell (q')$, where $q$ (resp. $q'$) is a path in $\Gamma$ such that the concatenation $pq$ (resp. $p'q'$) is a closed reduced path in $\Gamma$. The two faces are glued together at the vertex lifting to the initial vertex of $p$ resp. $p'$, and edges of $f$ and $f'$ are folded together starting at this vertex in correspondence to the free reduction of $\ell(p)^{-1}\ell(p')$. Then we glue these two faces into the diagram instead of $F$. Note that interior edges arising from gluing $f$ and $f'$ together do not originate from $\Gamma$. Hence any interior arc in $\partial f$ or $\partial f'$ is a piece. Now there are three cases to consider:
\begin{itemize}
\item[(A)] Suppose neither $\ell(p)$ nor $\ell(p')$ start with $\ell(a)$. Then we can choose $q$ and $q'$ such that $\ell(q)$ and $\ell(q')$ terminate with $\ell(a)^{-1}$. Thus we can fold $f$ and $f'$ together along the corresponding edge as well. This edge does not lift to $\Gamma$. Then the interior paths of $f$ and $f'$ consist of at least 4 pieces by Lemma \ref{lem:free1}.
\item[(B)] Suppose both start with $a$. Then $p$ and $p'$ consist of at least 4 pieces by Lemma \ref{lem:free1}.
\item[(C)] Suppose $\ell(p)$ starts with $\ell(a)$, and $\ell(p')$ does not (or the symmetric case). Then $f$ has interior degree at least 4, $f'$ has interior degree at least 3, and there is no folding between $f$ and $f'$. We can, however just fold together the two exterior edges of $f$ and $f'$ incident at the vertex where the two faces intersect (disregarding the label) to increase the interior degrees of $f$ and $f'$.
\end{itemize}

For all cases we obtain that after these choices and additional foldings, the constructed boundary faces $f$ and $f'$ have interior degree at least 4. Any arc in the intersection of $f$ or $f'$ with an interior face is a piece. Hence, after we performing this procedure for all special faces, we obtain a $(3,7)$-diagram violating the first curvature formula, which is a contradiction.

We are left with the case that $D$ does contain interior edges originating from $\Gamma$. As above, we can construct from $D$ a diagram $D'$ over $\Gamma$ (in particular making the choices in case (A)), but without performing the foldings in case (A) or case (C).  By our assumptions, an interior edge $e$ originating from $\Gamma$ lies in the the boundary of one or two special faces, and therefore it lies in the intersection of boundary faces $f_1$ and $f_2$ of $D'$, where possibly $f_1=f_2$. 

If $f_1=f_2$, then, after forgetting vertices of degree 2 in $D'$, $f_1$ encloses some nontrivial $[3,6]$-subdiagram $\delta$ with at most one boundary vertex of degree (in $\delta$) less than 3, contradicting the second curvature formula. The second curvature formula also yields that any $[3,6]$-diagram with at most two boundary vertices of degree less than 3 is an arc, whence $f_1\cap f_2$ is an arc. 

Since the boundary word of $D$ is cyclically reduced, there can be no vertex in $\partial D'\cap f_1\cap f_2$ whose lifts via $f_1$ and $f_2$ coincide. 
Therefore, $\partial D'\cap f_1\cap f_2$ is empty, and there are nontrivial subdiagrams on both sides of $f_1\cup f_2$. We call two faces of $D'$ containing an edge originating from $\Gamma$ a \emph{bottle neck}.

If there exists a bottle neck, there also exists an \emph{extremal} bottle neck $f_1\cup f_2$, i.e. one such that in one component $C$ of $D'\setminus(f_1\cup f_2)$ there are no more bottle necks. This means in the diagram $\Delta:=C\cup f_1\cup f_2$, all interior edges originating from $\Gamma$ lie in $f_1\cap f_2$.

Now consider the diagram $\Delta$. We can remove all edges originating from $\Gamma$ to merge $f_1$ and $f_2$ to one face $f$. The resulting diagram has no more interior edges originating from $\Gamma$. Thus we can apply our technique from above: Performing the foldings indicated in case (A) and case (C) above yields a $(3,7)$-diagram where all but one boundary face (namely $f$) have interior degree at least 4. This contradicts the first curvature formula.
\end{proof}

\begin{proof}[Proof of Theorem \ref{thm:ess-free}]
This proof is obtained precisely as above by making the following obervations and adjustments:
\begin{itemize}
 \item Since the alphabet $S$ is finite and $\Gamma$ has infinitely many components, we can assume that on all but finitely many components, all edges are essential pieces.
 \item On these infinitely many components, Lemma \ref{lem:free1} can be directly adapted to the $Gr(7)$ case.
 \item Instead of choosing vertex-disjoint cycles, we choose cycles $\gamma_i$ such that that are essentially vertex-disjoint, i. e. $\gamma_i$ and $\phi(\gamma_j)$ are vertex-disjoint for all labelled automorphisms $\phi$ of $\Gamma$ if $i\neq j$. This can be be done due to our assumptions on $\Gamma$.
 \item In the proof, we replace all occurrences of piece, originating from, coincide, etc. by {essential} piece, {essentially} originating from, {essentially} coincide, etc.
\end{itemize}
\end{proof}

\section{Coarse embedding of the graph}

We now consider the question of how $\Gamma$ embeds into $G(\Gamma)$. Similar arguments have been used by Ollivier \cite{Oll} to show that a connected $C'(\frac{1}{6})$-labelled graph embeds isometrically into $G(\Gamma)$. 

\vspace{8pt}

Let $\Gamma$ be a connected, labelled graph. Let $v$ be a vertex in $\Gamma$ and $g\in G(\Gamma)$. There is a unique map of labelled graphs $\iota:\Gamma\to\Cay(G(\Gamma),S)$ satisfying $\iota(v)=g$. We prove a claim of Gromov \cite[Theorem 2.3]{Gr}:

\begin{lem}\label{lem:embedding_injective}
 Let $\Gamma$ be a connected component of a $Gr(6)$-labelled graph. Then the map $\iota:\Gamma\to\Cay(G(\Gamma),S)$ is injective.
\end{lem}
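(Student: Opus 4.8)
The plan is to argue by contradiction through a minimal counterexample, the curvature formula for $(3,6)$-diagrams, and a shortcut argument. Since $\Gamma$ and $\Cay(G(\Gamma),S)$ both carry reduced labellings, non-injectivity of $\iota$ means non-injectivity on vertices. Call a triple $(x,y,p)$ a \emph{witness} if $x\neq y$ are vertices of $\Gamma$, $p$ is a reduced path from $x$ to $y$, and $\ell(p)=1$ in $G(\Gamma)$; witnesses exist, and $w:=\ell(p)$ is then a nonempty reduced word in $F(S)$. Among all witnesses I would pick one minimizing, lexicographically, the pair $(|p|,A)$, where $A$ is the minimal area of a singular disk diagram for $w$ over $\Gamma$ (both quantities are finite since $w=1$ in $G(\Gamma)$). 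Since $\iota(x)=\iota(y)$, any reduced path from $x$ to $y$ reads a word equal to $1$ in $G(\Gamma)$, so $|p|=d_\Gamma(x,y)$ and $p$ is a simple path. As $p$ visits no vertex twice, no disk diagram for $w$ has a cut vertex: a cut vertex would split off a proper subword $w'$ of $w$ with $w'=1$ in $G(\Gamma)$ read by a subpath of $p$ with distinct endpoints, hence a shorter witness. So a minimal-area diagram $D$ for $w$ is a simple disk diagram, and by Corollary \ref{cor:graphical_basic} and Lemma \ref{lem:graphical_simple} it is a $(3,6)$-diagram whose faces are simply connected, carry labels of simple cycles of $\Gamma$, and have no interior edge originating from $\Gamma$. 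Since $w\neq 1$ in $F(S)$, $D$ has at least one face.

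If $D$ has exactly one face $f$, then $\partial D$ traverses $\partial f$ once, so $w$ is, up to cyclic permutation and inversion, the label of a simple cycle $\gamma$ of $\Gamma$; thus reading $w$ along $\gamma$ from a suitable vertex yields a \emph{closed} path $\hat\gamma$ with $\ell(\hat\gamma)=w$, while reading $w$ from $x$ yields $p$, which is not closed because $x\neq y$. The two resulting maps of the labelled segment spelling $w$ into $\Gamma$ are essentially distinct, since a labelled automorphism of $\Gamma$ intertwining them would carry the closed path $\hat\gamma$ to the non-closed $p$. Hence that segment is an essential piece, so the nontrivial closed path $\hat\gamma$ is a concatenation of a single essential piece, contradicting the $Gr(6)$ condition (Definition \ref{defi:graphical2}).

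So $D$ has at least two faces, and Curvature formula I gives $\sum_f(4-i(f))\ge 6$ over boundary faces; since a boundary face with $i(f)=0$ would be attached to the rest of $D$ only at vertices (forcing a cut vertex), the positive terms, coming from faces with $i(f)\le 3$, sum to at least $6$. The key step is a \emph{shortcut}: let $f$ be a boundary face whose intersection with $\partial D$ is a single arc $a$ that is a genuine subpath of $p$ (i.e.\ avoids the base vertex of $D$), and compare $a\subseteq p$ with its image under the lift of $\partial f$ to the simple cycle $\gamma_f$. If these coincide, the complementary arc of $\gamma_f$ joins the endpoints of $a$ within $i(f)\le 3$ edges; since $a$ is geodesic this forces $|a|=i(f)$, hence $|\partial f|=6$, and replacing $a$ by that complementary arc gives a path from $x$ to $y$ of the same length whose free reduction is a witness, filled by $D$ with $f$ removed, of area $|D|-1<A$ --- contradicting minimality of $A$. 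If the two images of $a$ are distinct but essentially so, $a$ is an essential piece; the remainder of $\partial f$ is a union of interior arcs of $D$, hence at most $i(f)\le 3$ essential pieces, so $\partial f$ is a nontrivial closed path that is a concatenation of at most $4$ essential pieces, again contradicting $Gr(6)$. If they are distinct but essentially equal via an automorphism $\psi$, then $\psi^{-1}(\gamma_f)$ is a simple cycle through $a$ with a complementary arc of length $i(f)\le 3$, and the previous area-reducing replacement applies. One first removes the boundary faces meeting $\partial D$ in two or more arcs by an analogous shortcut along an interior arc of $\partial f$ joining two such arcs, and one checks that a usable face off the base vertex always exists: at most the two boundary faces touching the base vertex can carry their arc through it, and if essentially all the required curvature were concentrated there, those faces would satisfy $i(f)=1$ and $e(f)\ge 5$, so that even the constrained shortcut strictly shortens $p$.

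The reductions to a simple disk diagram, the one-face case, and the structural input all follow cleanly from Section \ref{section:preliminaries}; the technical heart --- and the step demanding real care --- is the multi-face case, where one must keep track of how the arcs of a boundary face lie inside the \emph{non-closed} path $p$, verify that each shortcut strictly decreases either $|p|$ or the diagram area, and treat the configurations near the base vertex of $D$.
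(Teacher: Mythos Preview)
There is a genuine gap in the multi-face case, and it stems from two related errors. First, your lexicographic minimization of $(|p|,A)$ is the wrong choice: when you perform the shortcut, replacing the exterior arc $a$ of a boundary face $f$ by the complementary arc of $\gamma_f$ inside $\Gamma$, the resulting path $p'$ from $x$ to $y$ may well be \emph{longer} than $p$, so you cannot invoke minimality of $A$ (which was only minimal among witnesses of the minimal length $|p|$). Second, the assertion that ``the complementary arc of $\gamma_f$ joins the endpoints of $a$ within $i(f)\le 3$ edges'' conflates two different quantities: $i(f)$ is the number of interior \emph{arcs} of $f$ in the $(3,6)$-diagram obtained after forgetting degree-$2$ vertices, but each such arc may consist of arbitrarily many edges of the original labelled diagram. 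The length of the complementary arc in $\Gamma$ is the sum of the lengths of those $i(f)$ arcs, not $i(f)$ itself, so nothing forces $|a|\le 3$ or $|\partial f|=6$. The remaining case analysis (faces with several exterior arcs, faces touching the base vertex) inherits this confusion and stays at the level of a sketch.

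The paper's argument sidesteps all of this by a different minimization and a different curvature formula. One minimizes the \emph{area} of a diagram for $\ell(p)$ over \emph{all} paths $p$ from $x$ to $y$ in $\Gamma$, with no constraint on $|p|$. Then if any boundary edge $e$ of $D$ has its lift via $p$ essentially equal to its lift via the face $f$ it bounds, removing $e$ and the interior of $f$ yields a diagram of strictly smaller area whose boundary word is again read on a path in $\Gamma$ from $x$ to $y$---an immediate contradiction, regardless of how long that path is. Hence every exterior arc is an essential piece; together with Corollary~\ref{cor:graphical_basic} for the interior arcs, every face has at least $6$ arcs by $Gr(6)$, so $D$ is a $[3,6]$-diagram after forgetting degree-$2$ vertices. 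Now apply Curvature formula~II (on boundary \emph{vertices}, not faces): since $w$ is reduced there is at most one vertex of degree $1$, so $\sum_{v\in\partial D}(5/2-d(v))<3$, a contradiction. No separate analysis of how boundary faces meet $\partial D$, or of the base vertex, is needed.
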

\begin{proof}
Let $x\neq y\in \Gamma$, and assume $\iota(x)=\iota(y)$. Let $p$ be a path from $x$ to $y$ on $\Gamma$ such that the area of $w=\ell(p)$ is minimal among all paths from $x$ to $y$. We may assume that $w$ is reduced. Since $x\neq y$, this area is greater than zero. Let $D$ be a minimal diagram for $w$ over $\Gamma$. For each boundary edge of $D$ bounding a face, there is a lift to $\Gamma$ via $p$ and one via the face it lies on. Suppose there were an edge $e$ bounding a face $f$ for which these two lifts were essentially equal. Then we could remove $e$ and the interior of $f$. The resulting diagram would have a boundary word read on a path connecting $x$ and $y$ of lesser area than $D$, which is a contradiction. Thus every arc of $D$ bounding a face is a piece. By construction, $D$ has at most one spur (connecting the base vertex to the rest of the diagram) and hence at most one vertex of degree one. Now forgetting vertices of degree two leaves a $[3,6]$-diagram violating the second curvature formula, which 
is a contradiction. 
\end{proof}

\begin{cor}
 For any $C(6)$-labelled graph $\Gamma$ with a connected component having more than one vertex, the group $G(\Gamma)$ is infinite.
\end{cor}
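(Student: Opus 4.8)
The plan is to deduce this from two results already established: the injectivity of the natural map $\Gamma_0\to\Cay(G(\Gamma),S)$ for the component $\Gamma_0$ with more than one vertex (Lemma~\ref{lem:embedding_injective}), and the torsion-freeness of $G(\Gamma)$ (the Corollary to Theorem~\ref{thm:asphericity}). Both apply here: the $C(6)$ condition implies the $Gr(6)$ condition, since every essential piece is a piece, so Lemma~\ref{lem:embedding_injective} is available for the connected component $\Gamma_0$; and the Corollary to Theorem~\ref{thm:asphericity} is stated for arbitrary $C(6)$-labelled graphs.

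The first step is to see that $G(\Gamma)$ is nontrivial. I would pick an edge of $\Gamma_0$, say joining vertices $x\neq y$ and carrying the label $s\in S$. By Lemma~\ref{lem:embedding_injective} the map $\iota\colon\Gamma_0\to\Cay(G(\Gamma),S)$ with $\iota(x)=1$ is injective, so $\iota(x)=1$ and $\iota(y)=s^{\pm1}$ are distinct vertices of $\Cay(G(\Gamma),S)$; equivalently $s\neq 1$ in $G(\Gamma)$. Thus $G(\Gamma)$ is not trivial. (Even more crudely: $\Cay(G(\Gamma),S)$ must contain an injective copy of $\Gamma_0$ and hence has at least two vertices.)

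The second step is to combine this with torsion-freeness. By the Corollary to Theorem~\ref{thm:asphericity}, $G(\Gamma)$ is torsion-free, and a nontrivial torsion-free group cannot be finite, since a finite group of order greater than $1$ always contains an element of finite order greater than $1$. Hence $G(\Gamma)$ is infinite.

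I do not expect any genuine obstacle; the only ``idea'' is to route through torsion-freeness instead of trying to exhibit an explicit element of infinite order via the diagram calculus. For orientation: if $\Gamma_0$ itself happens to be infinite, the conclusion is immediate from Lemma~\ref{lem:embedding_injective} alone, because then $\Cay(G(\Gamma),S)$ is infinite; the argument above is phrased so as to also handle the case in which every connected component of $\Gamma$ is finite, with no case distinction required.
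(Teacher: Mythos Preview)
Your proof is correct and follows exactly the same route as the paper: nontriviality from Lemma~\ref{lem:embedding_injective} together with torsion-freeness from Theorem~\ref{thm:asphericity}. You simply spell out the details (that $C(6)$ implies $Gr(6)$, and that a nontrivial torsion-free group is infinite) which the paper leaves implicit.
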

\begin{proof}
 This follows since $G(\Gamma)$ is nontrivial and, by Theorem \ref{thm:asphericity}, torsion-free.
\end{proof}

Let $(\Gamma_n)_{n\in\N}$ be a sequence of connected finite graphs and let $\Gamma:=\bigsqcup_{n\in\N}\Gamma_n$ be their disjoint union. We endow $\Gamma$ with a metric that coincides with the graph metric on each connected component such that $d(\Gamma_ {a_n},\Gamma_{b_n})\to\infty$ as $a_n+b_n\to \infty$ assuming $a_n\neq b_n$ for almost all $n$. (For example, set $d(x,y)=\diam(X_m)+\diam(X_n)+m+n$ if $x\in X_m$, $y \in X_n$ and $m\neq n$.) We call the resulting metric space the \emph{coarse union} of the $\Gamma_n$. 

If $\Cay(G(\Gamma),S)$ is infinite, it contains an infinite geodesic ray. Then we can map $\Gamma$ into $\Cay(G(\Gamma),S)$ via a map of labelled graphs $\iota$ by lining the $\Gamma_n$'s up on this geodesic ray such that, for all sequences $a_n,b_n$ we have $d(\iota(\Gamma_{a_n}),\iota(\Gamma_{b_n}))\to \infty$ if and only if $d(\Gamma_{a_n},\Gamma_{b_n})\to \infty$. We claim that such a map $\iota$ is a \emph{coarse embedding}, i.e.\ it satisfies for every sequence of pairs of points $(x_n,y_n)_{n\in\N}$ in $\Gamma\times \Gamma$:
$$d(x_n,y_n)\to \infty \Leftrightarrow d(\iota(x_n),\iota(y_n))\to\infty.$$

\begin{thm}\label{thm:coarse_embedding} Let $(\Gamma_n)_{n\in\N}$ be a sequence of connected finite graphs such that $\Gamma:=\bigsqcup_{n\in\N}\Gamma_n$ is $Gr(6)$-labelled and such that $|\Gamma_n|$ is unbounded. Then the coarse union $\Gamma$ embeds coarsely into $\Cay(G(\Gamma),S)$.
\end{thm}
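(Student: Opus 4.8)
The plan is to reduce the theorem to a single uniform estimate on how much $\iota$ contracts distances inside the individual components $\Gamma_n$, and then to prove that estimate with the $(3,6)$-diagram machinery of Section~\ref{section:preliminaries}.

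First I would pin down the map $\iota$. Since $|\Gamma_n|$ is unbounded, the graphs $\Gamma_n$ have unboundedly many vertices, and Lemma~\ref{lem:embedding_injective} embeds each $\Gamma_n$ into $\Cay(G(\Gamma),S)$; hence $G(\Gamma)$ is infinite and $\Cay(G(\Gamma),S)$ carries an infinite geodesic ray. As in the paragraph preceding the statement, I would define $\iota$ by lining the images of the $\Gamma_n$ up along this ray, inductively placing $\iota(\Gamma_n)$ so far from the earlier images that the finite sets $\iota(\Gamma_n)$ separate at least as fast as the $\Gamma_n$ do in the coarse union. This spacing handles all pairs of points lying in distinct components: for such pairs $d_\Gamma(x_n,y_n)\to\infty$ forces the two component indices and diameters to blow up, hence $d_{\Cay}(\iota(x_n),\iota(y_n))\to\infty$, and conversely boundedness of $d_\Gamma$ confines the pair to finitely many components and hence to a bounded region of $\Cay(G(\Gamma),S)$.

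It then remains to treat pairs $x_n,y_n$ in a common component $\Gamma_{k_n}$, and since the $\Gamma_n$ are finite the only nontrivial point is the implication $d_{\Gamma_{k_n}}(x_n,y_n)\to\infty\Rightarrow d_{\Cay}(\iota(x_n),\iota(y_n))\to\infty$ (the reverse is clear because $\iota$ is simplicial, hence $1$-Lipschitz on each component). I claim this follows from a \emph{uniform} distortion bound: there is a function $\rho$ with $\rho(t)\to\infty$, depending only on $S$, such that $d_{\Cay}(\iota(x),\iota(y))\ge\rho\bigl(d_{\Gamma_n}(x,y)\bigr)$ for every $n$ and all $x,y\in\Gamma_n$. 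Equivalently I must bound $d_{\Gamma_n}(x,y)$ by a function $T(C)$ of $C:=d_{\Cay}(\iota(x),\iota(y))$. Here I would take a geodesic $p$ of $\Gamma_n$ from $x$ to $y$ and a geodesic $q$ of $\Cay(G(\Gamma),S)$ from $\iota(x)$ to $\iota(y)$, so $|q|=C$ and $\ell(p)=\ell(q)$ in $G(\Gamma)$, then choose $p$ among all paths of $\Gamma_n$ from $x$ to $y$, and a diagram $D$ for $\ell(p)\ell(q)^{-1}$ over $\Gamma$, of minimal area. By Corollary~\ref{cor:graphical_basic}, $D$ is a reduced $(3,6)$-diagram with simply connected faces and no interior edge originating from $\Gamma$; and, exactly as in the proof of Lemma~\ref{lem:embedding_injective}, minimality forces every arc of a face lying on the $p$-side of $\partial D$ to be an essential piece — otherwise the relator cycle of that face would agree with $p$ along the arc, and substituting its complement into $p$ would produce another path from $x$ to $y$ carried by a diagram of strictly smaller area. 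Combined with the fact that interior arcs are essential pieces (Corollary~\ref{cor:graphical_basic}), the $Gr(6)$ condition then forces a face meeting $\partial D$ only along the $p$-side, and only in a single arc, to have at least five interior arcs, so the first curvature formula bounds the number of boundary faces of $D$ linearly in $|q|=C$. What is left is to bound the total $p$-length that a single face can absorb, and this is where the finiteness of $S$ together with $|\Gamma_n|$ unbounded enters: a face swallowing a long stretch of the geodesic $p$ would glue a long essential piece to a complementary arc inside one relator cycle, while over a finite alphabet the components of $\Gamma$ cannot be pairwise label-disjoint in such a way — long subpaths must recur across the many pairwise non-isomorphic large components — so that relator cycle would decompose into fewer than six essential pieces, contradicting $Gr(6)$. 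Assembling the bounds gives $d_{\Gamma_n}(x,y)\le T(C)$, and hence $\iota$ is a coarse embedding.

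The main obstacle is exactly this last point. The first curvature formula by itself controls only the \emph{number} of faces abutting the geodesic side of the bigon, whereas the non-metric condition $Gr(6)$ — unlike $Gr'(\tfrac16)$ — permits arbitrarily long essential pieces, so a priori a single face could still absorb an unbounded part of $p$; ruling this out is where ``$S$ finite'' and ``$|\Gamma_n|$ unbounded'' must genuinely be combined. One must also deal with the bookkeeping when a face meets $\partial D$ in several arcs (a face with $a\ge 2$ arcs on the $p$-side need not have five interior arcs), presumably by peeling such faces off and inducting on the area of $D$. Making all of this quantitative, with every constant depending only on $S$, is the technical heart of the theorem.
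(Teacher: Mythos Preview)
Your reduction to pairs in a single component and the observation that arcs of faces on the $p$-side must be essential pieces are correct and match the paper.  The gap is in the last step.  The claim that ``long subpaths must recur across the many pairwise non-isomorphic large components, so that relator cycle would decompose into fewer than six essential pieces'' is simply false: the $Gr(6)$ condition imposes no bound at all on the \emph{length} of an essential piece (this is precisely what separates it from $Gr'(\frac{1}{6})$), so a single face can absorb an arbitrarily long arc of $p$ without any violation.  Your $(3,6)$-diagram has only one controlled side (the $p$-side), so the first curvature formula bounds at best the number of faces touching $q$, not $|p|$, and the argument cannot close.

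The paper's proof avoids this by a different comparison.  Arguing by contradiction, if $d_{\Cay}(\iota(x_n),\iota(y_n))$ stays bounded along a subsequence, then there are only finitely many possible values of $w:=\iota(x_n)^{-1}\iota(y_n)\in G(\Gamma)$, so pigeonhole yields a further subsequence $(x_n'',y_n'')$ along which $w$ is constant and the ambient components are pairwise non-isomorphic.  Now compare a path $q_n$ from $x_n''$ to $y_n''$ not with a Cayley geodesic but with a path $p_n$ from the \emph{fixed} pair $x_1'',y_1''$ in its fixed finite component $X$.  Both sides of this bigon lift to $\Gamma$, so the argument of Lemma~\ref{lem:embedding_injective} applies to \emph{every} boundary arc of every face: after forgetting degree-$2$ vertices one has a $[3,6]$-diagram with at most two vertices of degree $1$, and the second curvature formula forces it to be a single arc with no faces.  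Hence $\ell(p_n)=\ell(q_n)$ as words; since the two paths lie in non-isomorphic components, $q_n$ is an essential piece, in particular simple, so $|q_n|=|p_n|$ is bounded by the longest simple path in the fixed finite graph $X$, contradicting $d(x_n'',y_n'')\to\infty$.

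The missing idea, then, is: do not compare to a Cayley geodesic, compare to a path in a fixed reference component obtained via pigeonhole.  This upgrades the $(3,6)$-estimate (which bounds only face counts) to a $[3,6]$-collapse (area zero), and the uniform bound comes for free from the finiteness of the reference component rather than from any control on piece length.
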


\begin{proof}
First note that the assumption implies that $\Cay(G(\Gamma),S)$ is infinite, since by Lemma \ref{lem:embedding_injective}, each $\Gamma_n$ maps injectively into $\Cay(G,S)$.

Let $(x_n,y_n)_{n\in\N}$ be a sequence of pairs of points such that $d(x_n,y_n)\to\infty$. We claim: $d(\iota(x_n),\iota(y_n))\to\infty$. By the above construction of $\iota$, it is sufficient to consider the case where for all $n$, $x_n$ and $y_n$ lie in the same connected component. Suppose our claim is false. Then $(x_n,y_n)$ has a subsequence $(x_n',y_n')$ such that $d(\iota(x_n'),\iota(y_n'))$ is bounded. Hence there is a subsequence $(x_n'',y_n'')$ such that for all $n$, the labels of paths from $x_n''$ to $y_n''$ define the same element $w$ of $G$. Since all $\Gamma_n$ are bounded, we also assume that for $n\neq m$, the graphs containing $\{x_n'',y_n''\}$ and $\{x_m'',y_m''\}$ are distinct and non-isomorphic. 

Let $n\in\N$ and choose paths $p_n,q_n$ from $x_1''$ to $y_1''$, respectively from $x_n''$ to $y_n''$, such that the minimal area of a diagram for the word $\ell(p_n)\ell(q_n)^{-1}$ over $\Gamma$ is minimal. We claim that the area of such a minimal diagram $D_n$ is zero. We may assume that the paths are reduced. We can argue as in Lemma \ref{lem:embedding_injective} that every arc bounding a face in $D_n$ is a piece. Now forgetting vertices of degree two yields a $[3,6]$-diagram with at most two vertices of degree one. By the second curvature formula, such a diagram has exactly two vertices and hence no faces. Thus $D_n$ has no face, $\ell(p_n)=\ell(q_n)$, and $p_n$ and $q_n$ are essential pieces and hence simple paths. 

Let $X$ be the connected component of $\Gamma$ containing $x_1''$. Then $|q_n|=|p_n|$ is bounded from above by the maximal length of a simple path on the finite graph $X$. Since $n$ was arbitrary, this contradicts the assumption $d(x_n'',y_n'')\to\infty$.

Obviously, $d(\iota(x),\iota(y))\leq d(x,y)$ for any $x,y$ in a connected component of $\Gamma$. Hence $d(\iota(x_n),\iota(y_n))\to\infty$ implies $d(x_n,y_n)\to\infty$.
\end{proof}

\begin{example}
 Let $p\in\N$. We construct a sequence of finite, connected labelled graphs $(\Gamma_n)_{n\in\N}$ such that their disjoint union $\Gamma$ has a $C(p)$-labelling and such that any map of labelled graphs $\iota:\Gamma\to\Cay(G(\Gamma),S)$ is \emph{not} a quasi-isometric embedding. Let $S=S_1\cup S_2\cup\{a,b\}$, where $S_1,S_2$ and $\{a,b\}$ are pairwise disjoint and $|S_1|>1$. Let $(w_n)_{n\in\N}$ be a sequence of pairwise distinct, reduced words in the free monoid $M(S_1)$ on $S_1\cup S_1^{-1}$ such that $|w_n|=O(\log n)$, and let $(v_n)_{n\in\N}$ a sequence of pairwise distinct, reduced words in $M(S_2)$. For each $n$, let $\Gamma_n$ be the graph given in Figure \ref{figure:example_embedding}:

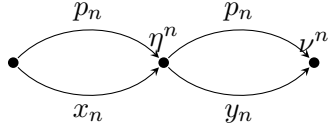
\begin{figure}[ht]
\begin{tikzpicture}[>=stealth,shorten <=2.5pt, shorten >=2.5pt]
\coordinate (A) at (0,0);
\coordinate [label=above:\small $\eta^n$] (B) at (2,0);
\coordinate [label=above:\small $\nu^n$] (C) at (4,0);
\fill (A) circle (2pt);
\fill (B) circle (2pt);
\fill (C) circle (2pt);
\draw[->] (A) to [out=45,in=135] node [above] {\small $p_n$} (B);
\draw[->] (A) to [out=-45,in=-135] node [below] {\small $x_n$} (B);
\draw[->] (B) to [out=45,in=135] node [above] {\small $p_n$} (C);
\draw[->] (B) to [out=-45,in=-135] node [below] {\small $y_n$} (C);
\end{tikzpicture}
\caption{The graph $\Gamma_n$.}
\label{figure:example_embedding}
\end{figure}

\noindent Here $p_n,x_n,y_n$ are words in $S$ given as follows:

\begin{itemize}
 \item $p_n=b^{f(n)}$, where $f(n)$ is some function $\N\to\N$ such that $\log n=o(f(n))$,
 \item $x_n=w_{(p-1)n+1}a w_{(p-1)n+2}a\ldots aw_{pn}a$,
 \item $y_n=v_{(p-1)n+1}a v_{(p-1)n+2}a\ldots av_{pn}a^{f(n)}$.
\end{itemize}

The labels $\eta^n$ and $\nu^n$ are vertex-labels for reference. The reader can easily check that the resulting graph $\Gamma$ has a $C(p)$-labelling by considering how many non-consecutive instances of $a$ can occur in a piece. By construction $|p_n|\leq |y_n|$ and hence $d(\eta^n,\nu^n)=|p_n|$. Since $p_n$ and $x_n$ are equal in $G(\Gamma)$, we have $d(\iota(\eta^n),\iota(\nu^n))\leq |x_n|=o(d(\eta^n,\nu^n))$. Therefore, $\iota$ cannot be a quasi-isometric embedding.

\end{example}

\section{Lacunary hyperbolicity}

We now consider an infinite sequence of finite graphs $(\Gamma_n)_{n\in\N}$ such that their disjoint union is $Gr(7)$-labelled or $Gr'(\frac{1}{6})$-labelled. The resulting group is a limit of hyperbolic groups. In the view of \cite{OOS} it is natural to ask whether these groups are \emph{lacunary hyperbolic}. We give some preliminary definitions:

\begin{defi}[Ultrafilter] An \emph{ultrafilter} is a finitely additive map $\omega:2^\N\to\{0,1\}$ such that $\omega(\N)=1$. An ultrafilter $\omega$ is called \emph{non-principal} if $\omega(F)=0$ for all finite subsets $F$ of $\N$. Let $f:\N\to \R$ be a sequence. Then for $x\in \R$ we say $x=\lim_n^{\omega}f(n)$ if $\forall \epsilon >0: \omega(f^{-1}([x-\epsilon,x+\epsilon]))=1$.
\end{defi}
It is a fact that given an ultrafilter $\omega$, any bounded sequence $f:\N\to\R$ has a limit with respect to $\omega$.

\begin{defi}[Asymptotic cone] Let $\omega$ be a non-principal ultrafilter and $(d_n)_{n\in\N}$ a sequence of real numbers such that $d_n\to\infty$ as $n\to\infty$. The sequence $d_n$ is called \emph{scaling sequence}. Let $G$ be a group generated by a finite set $S$. Let $G^\N$ denote the space of sequences of elements of $G$, and $X:=\{(x_n)\in G^\N|d_S(1,x_n)=O(d_n)\}$, where $d_S$ denotes the distance in $\Cay(G,S)$. We define a pseudo-metric $d$ on $X$ by setting $d((x_n),(y_n))=\lim_n^\omega \frac{d(x_n,y_n)}{d_n}$. An equivalence relation on $X$ is induced by $(x_n)\sim (y_n):\Leftrightarrow d(x_n,y_n)=0$. The \emph{asymptotic cone} of $G$ with respect to $S$, $\omega$ and $(d_n)$ is defined as $X/\sim$ with the metric induced by $d$. We denote it by $\Con^w(G,d_n)$.
\end{defi}

Let $G$ be a group generated by a finite set $S$, and let $\pi:G\to H$ be a group homomorphism. Then the \emph{injectivity radius} of $\pi$, denoted $r_S(\pi)$, is the largest $r\in\R$ such that $\pi$ restricted to the open ball of radius $r$ at $1$ in $\Cay(G,S)$ is injective. An $\R$-tree is a 0-hyperbolic space (see Definition \ref{defi:hyperbolic_group}). In particular, an $\R$-tree contains no nontrivial embedded cycles. We recall the definition and a characterization of lacunary hyperbolicity from \cite[Section 3.1]{OOS}:

\begin{defi}[Lacunary hyperbolic group]
 A finitely generated group $G$ is called \emph{lacunary hyperbolic} if the following equivalent conditions hold:
\begin{itemize}
 \item Some asymptotic cone of $G$ is an $\R$-tree.
 \item $G$ is the direct limit of finitely generated groups and epimorphisms
 $$G_1\xrightarrow{\alpha_1}G_2\xrightarrow{\alpha_2}\ldots$$
 such that $G_i$ is generated by a finite set $S_i$, $\alpha_i(S_i)=S_{i+1}$ and each $G_i$ is $\delta_i$-hyperbolic, where $\delta_i=o(r_{S_i}(\alpha_i))$.
\end{itemize}
\end{defi}

\subsection{The $Gr(7)$ case}

Since the defining condition for lacunary hyperbolicity has a metric nature, the $Gr(7)$ condition does not yield optimal results. We are able to prove, however:

\begin{prop}\label{prop:7_lacunary}
 Let $(\Gamma_n)_{n\in\N}$ be a sequence of finite, connected labelled graphs such that their disjoint union is $Gr(7)$-labelled. Then there exists an infinite subsequence of graphs $(\Gamma_{k_n})_{n\in\N}$ such that $G(\bigsqcup_{n\in\N}\Gamma_{k_n})$ is lacunary hyperbolic.
\end{prop}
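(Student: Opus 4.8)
The plan is to realize $G\bigl(\bigsqcup_n\Gamma_{k_n}\bigr)$, for a suitably thinned sequence, as a direct limit of hyperbolic groups to which the direct‑limit characterization of lacunary hyperbolicity applies. Given a strictly increasing sequence $k_1<k_2<\cdots$, put $A_j:=\Gamma_{k_1}\sqcup\cdots\sqcup\Gamma_{k_j}$ and $G_j:=G(A_j)$. As $A_j$ is a finite disjoint union of finite $Gr(7)$-labelled graphs, the presentation of $G_j$ on $S$ by the (finitely many) words on simple cycles of $A_j$ is finite and, by Theorem \ref{thm:linear_isoperimetry}, satisfies a linear isoperimetric inequality; hence $G_j$ is hyperbolic by \cite{Al}, say $\delta_j$-hyperbolic with respect to $S$, where $\delta_j$ depends only on $\Gamma_{k_1},\dots,\Gamma_{k_j}$. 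The inclusions of relator sets induce epimorphisms $\alpha_j\colon G_j\twoheadrightarrow G_{j+1}$ fixing $S$, with $\varinjlim_j G_j=G\bigl(\bigsqcup_n\Gamma_{k_n}\bigr)$. So it suffices to choose the $k_n$ so that $\delta_j=o\bigl(r_S(\alpha_j)\bigr)$. A degenerate case is immediate: if only finitely many isomorphism types of components of $\bigsqcup_n\Gamma_n$ have nontrivial fundamental group, then $\bigsqcup_n\Gamma_n$ has only finitely many words on simple cycles, so $G\bigl(\bigsqcup_n\Gamma_n\bigr)$ is a finitely presented group satisfying a linear isoperimetric inequality, hence hyperbolic; and any $\delta$-hyperbolic group is lacunary hyperbolic, since rescaling turns $\Cay(G,S)$ into a $\tfrac{\delta}{d_n}$-hyperbolic space whose ultralimit is $0$-hyperbolic, i.e.\ an $\R$-tree. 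In that case take $k_n=n$.

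So assume there are infinitely many pairwise non-isomorphic components with nontrivial fundamental group. Since we only need to produce one good subsequence, we may discard all other components and assume every $\Gamma_n$ is of this kind. The crucial consequence of $Gr(7)$ is now: no cyclic word is the label of a simple cycle in two distinct components. Indeed, if $r$ were read along a simple cycle in $\Gamma_a$ and in $\Gamma_b$ with $a\neq b$, split this cycle into two arcs of positive length; each arc has a lift into $\Gamma_a$ and a lift into $\Gamma_b$, and these are essentially distinct because $\Gamma_a\not\cong\Gamma_b$, so each arc is an essential piece and the nontrivial cycle $r$ is a concatenation of two essential pieces --- contradicting $Gr(7)$. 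Hence, for every $L$, only finitely many components contain a simple cycle of length $\le L$.

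Now build $k_1<k_2<\cdots$ recursively. Given $k_1,\dots,k_j$ (so $\delta_j$ is determined), fix $N_j$ larger than $2(j+1)\delta_j$ and seek $k_{j+1}>k_j$ such that the epimorphism $\alpha_j\colon G_j\to G(A_j\sqcup\Gamma_{k_{j+1}})$ sends no nontrivial element of $G_j$ of length $\le N_j$ to $1$; this yields $r_S(\alpha_j)\ge\tfrac12 N_j>(j+1)\delta_j$, as desired. Suppose no such $k_{j+1}$ exists. Then for every $k>k_j$ there is a nontrivial $g\in G_j$ with $|g|\le N_j$ and $g=1$ in $G(A_j\sqcup\Gamma_k)$; since there are finitely many words of length $\le N_j$, one fixed $g$ works for all $k$ in an infinite set $K$. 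Fix $k\in K$ and a minimal diagram $D$ over $A_j\sqcup\Gamma_k$ for a reduced word representing $g$. Because $g\neq1$ in $G_j$, $D$ must contain a face reading a word on a simple cycle of $\Gamma_k$; by Lemma \ref{lem:graphical_simple}, Corollary \ref{cor:graphical_basic} and Area theorem II, $D$ is a $(3,7)$-diagram with at most $8N_j$ faces. One then bounds the length of that simple cycle of $\Gamma_k$ purely in terms of $N_j$ and $A_j$: after contracting degree-$2$ vertices, an Euler-characteristic count bounds the number of arcs of $D$ linearly in $N_j$; every arc is an embedded path, and an arc shared by a $\Gamma_k$-face and an $A_j$-face lifts to a simple path in the fixed finite graph $A_j$, hence has length $<|A_j|$; using minimality of $D$ and the curvature formulas to keep the remaining arcs under control, the boundary of the distinguished $\Gamma_k$-face, hence a simple cycle of $\Gamma_k$, has length bounded by a function of $N_j$ and $|A_j|$ only. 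As $k\in K$ was arbitrary, infinitely many pairwise non-isomorphic components would then contain a simple cycle of one and the same bounded length, contradicting the previous paragraph. Therefore $k_{j+1}$ exists, and the chain $G_1\xrightarrow{\alpha_1}G_2\xrightarrow{\alpha_2}\cdots$ with $\delta_j=o\bigl(r_S(\alpha_j)\bigr)$ exhibits $G\bigl(\bigsqcup_n\Gamma_{k_n}\bigr)$ as lacunary hyperbolic.

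The heart of the argument --- and the step I expect to require the most care --- is the length bound in the last paragraph. Unlike the metric $Gr'(\tfrac16)$ setting behind Proposition \ref{prop:1/6_lacunary}, the $Gr(7)$ condition places no a priori bound on the length of a piece, so one cannot simply extract a long boundary arc via a Greendlinger-type statement; instead one must combine minimality of the diagram (which forces arcs to be embedded and arcs meeting old faces to be short, since they lift into the fixed finite graph $A_j$) with the area bound and the curvature formulas to prevent the new faces from being too large. Everything else is bookkeeping around the direct-limit characterization of lacunary hyperbolicity.
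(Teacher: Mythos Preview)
Your overall strategy---recursively choosing $k_{j+1}$ so that the injectivity radius of $G_j\to G_{j+1}$ dominates $\delta_j$, via a pigeonhole-and-diagram argument---matches the paper's, and your reductions (to pairwise non-isomorphic components, and the observation that a cyclic word can be a simple-cycle label in at most one component) are correct. The gap is exactly where you flagged it: the length bound in your penultimate paragraph does not go through. You control arcs shared between a $\Gamma_k$-face and an $A_j$-face (they embed in the fixed finite graph $A_j$) and arcs on $\partial D$ (total length $\le N_j$), but you have no handle on arcs shared by \emph{two $\Gamma_k$-faces}. These are essential pieces inside $\Gamma_k$, and under $Gr(7)$ such pieces carry no length bound whatsoever; neither minimality of $D$ nor the curvature formulas help, since both speak only about the \emph{number} of arcs, never their lengths. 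Hence you cannot bound the boundary of your distinguished $\Gamma_k$-face, and the intended contradiction with your finiteness observation does not materialize.

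The paper (Lemma~\ref{lem:c7_injectivity_infinity}) avoids this by never trying to bound a cycle of $\Gamma_k$. Instead it takes the inclusion-minimal subdiagram $D'\subset D$ containing all $\Gamma_k$-faces and applies the first curvature formula to $D'$: some boundary face $f$ of $D'$ has at most $3$ interior arcs, hence at most $3$ exterior arcs (they alternate around $\partial f$), so $\partial f$ is a concatenation of at most $6$ arcs. Now apply $Gr(7)$ to the \emph{full} graph $\Gamma'=\bigsqcup_n\Gamma_n$ (not just $A_j\sqcup\Gamma_k$): one of these six arcs, call it $p_k$, is not an essential piece in $\Gamma'$. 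Interior arcs of $D'$ \emph{are} essential pieces (minimality), so $p_k$ lies in $\partial D'$, hence in $\partial D$ or along an $A_j$-face boundary, giving $|p_k|\le N_j+8N_j\cdot(|A_j|+1)$. But ``not an essential piece in $\Gamma'$'' forces the labels $\ell(p_k)$ to be pairwise distinct as $k$ ranges over your infinite set $K$ (same reasoning as your step~5, applied to paths rather than cycles). Only finitely many words have length below the fixed bound, so $K$ is finite---contradiction. The point is that the problematic $\Gamma_k$--$\Gamma_k$ arcs are now \emph{interior} to $D'$ and simply do not enter the estimate.
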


\begin{lem}\label{lem:c7_injectivity_infinity}
Let $\Gamma$ be a finite labelled graph and $(\Gamma_n)_{n\in\N}$ a sequence of connected, finite labelled graphs such that $\Gamma':=\Gamma\sqcup\bigsqcup_{n\in\N}\Gamma_n$ is $Gr(7)$-labelled and such that for $n\neq n'$, the labelled graphs $\Gamma_n$ and $\Gamma_{n'}$ are non-isomorphic. Then the injectivity radii $\rho_n$ of the projections $\pi_n:G(\Gamma)\to G(\Gamma\sqcup\Gamma_n)$ induced by the identity on $S$ tend to infinity. 
\end{lem}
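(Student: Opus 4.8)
The plan is to deduce the lemma from the following sharper statement: \emph{if $w$ is a reduced word with $w\neq 1$ in $G(\Gamma)$, then $w=1$ in $G(\Gamma\sqcup\Gamma_n)$ for at most one index $n$.} This suffices: if the $\rho_n$ did not tend to infinity, there would be $R>0$ with $\rho_n<R$ for infinitely many $n$; for each such $n$, $\pi_n$ fails to be injective on the open $R$-ball, so there are $g\neq g'$ with $d_S(1,g),d_S(1,g')<R$ and $\pi_n(g)=\pi_n(g')$, and then a geodesic word $w_n$ for $g^{-1}g'$ has length $<2R$, is $\neq 1$ in $G(\Gamma)$ and equals $1$ in $G(\Gamma\sqcup\Gamma_n)$. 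Since there are only finitely many words of length $<2R$ over $S$, one word $w$ would occur as $w_n$ for infinitely many $n$, contradicting the sharper statement. Before proving it I would discard the indices for which $\Gamma_n$ is a tree or is isomorphic, as a labelled graph, to a connected component of $\Gamma$: in both cases $\Gamma_n$ contributes no new relators, $\pi_n$ is an isomorphism, and $\rho_n=\infty$. So I may assume $\Gamma_{n_1}$ and $\Gamma_{n_2}$ are non-isomorphic to each other and to every component of $\Gamma$.

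Now suppose for contradiction that $w$ is reduced, $w\neq 1$ in $G(\Gamma)$, and $w=1$ in both $G(\Gamma\sqcup\Gamma_{n_1})$ and $G(\Gamma\sqcup\Gamma_{n_2})$ for some $n_1\neq n_2$. Let $D_1$ be a minimal diagram for $w$ over $\Gamma\sqcup\Gamma_{n_1}$ and $D_2$ a minimal diagram for $w^{-1}$ over $\Gamma\sqcup\Gamma_{n_2}$. By Corollary \ref{cor:graphical_basic} and Lemma \ref{lem:graphical_simple} each is a $(3,7)$-diagram, no interior edge of which essentially originates, and every face of which is labelled by a simple cycle of the respective graph; since a simple cycle lies in one connected component, each face is read either on $\Gamma$ or on $\Gamma_{n_i}$. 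As $w\neq 1$ in $G(\Gamma)$, not all faces of $D_1$ can be read on $\Gamma$ (else $D_1$ would be a diagram for $w$ over $\Gamma$), so $D_1$ has a face read on a simple cycle of $\Gamma_{n_1}$; likewise $D_2$ has a face read on a simple cycle of $\Gamma_{n_2}$. Gluing $D_1$ and $D_2$ along their common boundary yields a spherical diagram over $\Gamma\sqcup\Gamma_{n_1}\sqcup\Gamma_{n_2}$ — which is $Gr(7)$, hence $Gr(6)$ — carrying a face read on $\Gamma_{n_1}$ and a face read on $\Gamma_{n_2}$. Passing to a spherical diagram $S$ of minimal area with this property, and, as in the treatment of spherical diagrams in Section \ref{section:preliminaries}, to a \emph{simple} one, I apply the $Gr$-analogue of Lemma \ref{lem:graphical_basic_spherical}.

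If all edges of $S$ essentially originate from $\Gamma\sqcup\Gamma_{n_1}\sqcup\Gamma_{n_2}$, then any two faces sharing an edge have lifts which, after composing with an automorphism of $\Gamma\sqcup\Gamma_{n_1}\sqcup\Gamma_{n_2}$, agree on that edge, so the components carrying their lifts lie in the same automorphism-orbit and are therefore isomorphic. On a simple spherical diagram the face-adjacency graph is connected (faces consecutive around a vertex share an edge), so all faces of $S$ are read on mutually isomorphic components; since $\Gamma_{n_1}\not\cong\Gamma_{n_2}$, $S$ cannot have faces read on both $\Gamma_{n_1}$ and $\Gamma_{n_2}$ — a contradiction. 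Otherwise, by Lemma \ref{lem:graphical_basic_spherical}, $S$ has a proper simple disk subdiagram with freely trivial boundary word all of whose interior edges essentially originate; as in the proof of Lemma \ref{lem:graphical_simple} it can be folded away to produce a spherical diagram of smaller area, and (using that all faces of such a subdiagram are read on a single component) one argues that this does not destroy the presence of faces read on both $\Gamma_{n_1}$ and $\Gamma_{n_2}$, contradicting the minimality of $S$. Either way we reach a contradiction, which proves the sharper statement and hence the lemma.

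The step I expect to be the main obstacle is exactly this exclusion of the first alternative of Lemma \ref{lem:graphical_basic_spherical}: one must arrange the folding so as to retain faces read on both $\Gamma_{n_1}$ and $\Gamma_{n_2}$. The point is that if folding away such a subdiagram removed \emph{all} faces read on $\Gamma_{n_1}$, that subdiagram would be a simple disk diagram over $\Gamma_{n_1}$ with freely trivial boundary word and all interior edges originating, whence (by the $Gr$-version of the analysis in Lemma \ref{lem:asphericity} and the following lemmas) its derived sequence is trivial, so those faces could be deleted already from $D_1$, contradicting the minimality of $D_1$; so this situation does not occur. Making this bookkeeping precise — together with checking the $Gr$-versions of the cited lemmas, in particular the part played by automorphisms of the disjoint union, which is where the non-isomorphism hypothesis is genuinely used — is the technical heart of the argument.
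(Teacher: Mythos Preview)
Your sharper statement is false. Take $\Gamma$ to be a single vertex and $\Gamma_{n_1},\Gamma_{n_2}$ cycles labelled by cyclically reduced words $r_1,r_2$ (not proper powers) such that $\{r_1,r_2\}$ satisfies the classical $C(7)$ condition; then $w:=[r_1,r_2]=r_1 r_2 r_1^{-1} r_2^{-1}$ is nontrivial in $G(\Gamma)=F(S)$, yet $w=r_1\cdot(r_2 r_1 r_2^{-1})^{-1}\in\langle\langle r_1\rangle\rangle$ and $w=(r_1 r_2 r_1^{-1})\cdot r_2^{-1}\in\langle\langle r_2\rangle\rangle$, so $w=1$ in both $G(\Gamma\sqcup\Gamma_{n_1})$ and $G(\Gamma\sqcup\Gamma_{n_2})$. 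The gap is exactly where you flagged it: when folding away $\Delta$ removes all $\Gamma_{n_1}$-faces from $S$, you claim this lets you simplify $D_1$. But $\Delta$ lives in the spherical diagram $S$, which you reached only after gluing $D_1$ to $D_2$ and then minimising; there is no reason the $\Gamma_{n_1}$-faces sit inside $S$ the way they sat inside $D_1$. In the commutator example $D_1$ consists of two $r_1$-faces with boundary word $w\neq 1$, so nothing in $D_1$ can be folded away and $D_1$ is genuinely minimal, even though after gluing to $D_2$ those same two faces acquire a freely trivial boundary inside $S$. The minimality of $D_1$ is simply not contradicted.

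The paper's argument is quite different and avoids spherical diagrams. It fixes one word $w$ witnessing bounded $\rho_{k_n}$ for infinitely many $n$, builds for each such $n$ a $(3,7)$-diagram $D_n$ for $w$ over $\Gamma\sqcup\Gamma_{k_n}$ with simple-cycle faces and no interior edge essentially originating, and takes the inclusion-minimal subdiagram $D_n'$ containing all $\Gamma_{k_n}$-faces. The first curvature formula forces some boundary face $f_n$ of $D_n'$ to have an arc $p_n\subseteq\partial D_n'$ that is not an essential piece in $\Gamma'$; since the $\Gamma_{k_n}$ are pairwise non-isomorphic, these arcs must carry pairwise distinct labels, whence $|p_n|\to\infty$. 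On the other hand $p_n$ runs along $\partial D_n$ and boundaries of faces coming from the fixed finite graph $\Gamma$, so $|p_n|$ is bounded in terms of $|w|$ and the size of $\Gamma$. This is where the finiteness of $\Gamma$ and the pairwise non-isomorphism of the $\Gamma_n$ do their work.
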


\begin{proof}
Suppose this is false. Then there exists an infinite sequence $(k_n)_{n\in\N}$ and a word $w$ such that $w$ is nontrivial in $G(\Gamma)$ and such that for all $n\in\N$, $w$ is trivial in $G(\Gamma\sqcup \Gamma_{k_n})$ and $|w|=\rho_{k_n}$. For each $n\in\N$, let $D_n$ be a diagram for $w$ over $\Gamma\sqcup\Gamma_{k_n}$ such that the labels of all faces are read on simple cycles and such that no interior edge originates from the graph. By construction, each $D_n$ contains at least one face from $\Gamma_{k_n}$. For each $n\in\N$, let $D_n'$ be an inclusion-minimal subdiagram of $D_n$ containing all faces that lift to $\Gamma_{k_n}$. Then all boundary faces of $D_n'$ lift to $\Gamma_{k_n}$. 

The first curvature formula implies that for each $n$, there is at least one boundary face $f_n$ of $D_n'$ such that one arc $p_n$ of $f_n\cap\partial D_n'$ is not an essential piece in $\Gamma'$. This implies that all $p_n$ bear distinct labels and hence $|p_n|\to \infty$. Note that $p_n$ is a subpath of $\partial D_n$ and/or the boundaries of some faces of $D_n$ that lift to $\Gamma$. By Theorem \ref{thm:linear_isoperimetry}, the length of any such path is bounded by $|w|+8|w| (V(\Gamma)+1)$, where $V(\Gamma)$ is the number of vertices of $\Gamma$ (Hence $V(\Gamma)+1$ is an upper bound for the length of a simple closed path on $\Gamma$). This is a contradiction.
\end{proof}

\begin{proof}[Proof of the proposition]
If all but finitely many $\Gamma_n$ are isomorphic, we can consider $G(\Gamma)$ as given by a finite $Gr(7)$-labelled graph. In that case, $G(\Gamma)$ is hyperbolic and hence lacunary hyperbolic. 

Therefore, we can assume that for $n\neq n'$, $\Gamma_n$ and $\Gamma_{n'}$ are non-isomorphic. We choose the subsequence recursively: Let $k_0=1$, and let $k_1,\ldots,k_N$ be chosen. Set $\Gamma^N:=\sqcup_{i=1}^N \Gamma_{k_i}$. Then, by Theorem \ref{thm:linear_isoperimetry}, $G(\Gamma^N)$ is $\delta_N$-hyperbolic for some $\delta_N>0$. By Lemma \ref{lem:c7_injectivity_infinity}, the injectivity radii $\rho_n$ of the maps $G(\Gamma^N)\to G(\Gamma^N\sqcup \Gamma_n)$ induced by the identity on $S$ tend to infinity. Hence we may choose $k_{N+1}$ such that $k_{N+1}>\max\{k_1,\ldots,k_N\}$ and such that $\rho_{k_{N+1}}>N\delta_N$. The resulting limit group $G(\sqcup_{i\in\N}\Gamma_{k_i})$ is lacunary hyperbolic.
\end{proof}

\subsection{The $Gr'(\frac{1}{6})$ case}

Finitely presented $C'(\frac{1}{6})$ groups have been investigated in [Oll]. (In fact, Ollivier uses a stronger variant of the $C'(\frac{1}{6})$ condition.) His proofs of the facts we will use generalize to our $Gr'(\frac{1}{6})$ condition. The question, when classical $C'(\frac{1}{6})$ groups are lacunary hyperbolic has been solved in \cite[Proposition 3.12]{OOS}. We follow their arguments to extend this result:

\begin{prop}\label{prop:1/6_lacunary} Let $(\Gamma_n)_{n\in\N}$ be a sequence of finite, connected graphs such that
 \begin{itemize}
  \item $\Gamma:=\sqcup_{n\in\N}\Gamma_n$ has a $Gr'(\frac{1}{6})$-labelling and
  \item $\Delta(\Gamma_n)=O(g(\Gamma_n))$,
 \end{itemize}
where $\Delta(\Gamma_n)$ denotes the diameter and $g(\Gamma_n)$ denotes the girth of each graph. (If $\Gamma_n$ is a tree, set $g(\Gamma_n)=0$.)
Then $G(\Gamma)$ is lacunary hyperbolic if and only if the set of girths $L:=\{g(\Gamma_n)|n\in\N\}$ is sparse, i.e.\ for all $K\in \R^+$ there exists $a \in \R^+$ such that $[a,aK]\cap L=\emptyset$.
\end{prop}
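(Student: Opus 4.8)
The plan is to prove the two implications separately, using the two equivalent characterizations of lacunary hyperbolicity from the definition above, and taking the relevant facts of \cite{Oll} — transferred from the metric $C'(\tfrac16)$ setting to the $Gr'(\tfrac16)$ setting by replacing ``piece'' by ``essential piece'' as in Subsection~\ref{subsection:p3} — as black boxes. The facts needed are: (i) a Greendlinger-type lemma, namely that a reduced diagram over a $Gr'(\tfrac16)$-labelled graph with at least one face has a boundary face $f$ with an outer arc longer than $\tfrac12\lvert\partial f\rvert$; (ii) the linear isoperimetric inequality, available via $Gr'(\tfrac16)\Rightarrow Gr(7)$ and Theorem~\ref{thm:linear_isoperimetry}; (iii) that each component $\Gamma_n$ embeds into $\Cay(G(\Gamma),S)$ by a $(\lambda,\mu)$-quasi-isometric embedding with $\lambda,\mu$ depending only on the small-cancellation data; and (iv) that a finite $Gr'(\tfrac16)$-labelled graph $\Gamma'$ with components $\Gamma_{n_1},\dots,\Gamma_{n_k}$ defines a $\delta$-hyperbolic group with $\delta\le C\max_j\Delta(\Gamma_{n_j})$ for a universal $C$. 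We record two harmless reductions. Discarding one member of any pair of isomorphic labelled components does not change $G(\Gamma)$, so we assume the $\Gamma_n$ are pairwise non-isomorphic. Since reducedness of the labelling forces $\deg\le2\lvert S\rvert$, the hypothesis $\Delta(\Gamma_n)=O(g(\Gamma_n))$ makes bounded girth equivalent to bounded size for the $\Gamma_n$; hence if $L$ is bounded there are finitely many $\Gamma_n$, so $G(\Gamma)$ is finitely presented, hence hyperbolic, hence lacunary hyperbolic, while $L$ is (vacuously) sparse. Thus we may assume $L$ is unbounded; in particular $\Cay(G(\Gamma),S)$ is infinite by Lemma~\ref{lem:embedding_injective}.

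\emph{If $L$ is sparse, then $G(\Gamma)$ is lacunary hyperbolic.} Reorder so that $g(\Gamma_1)\le g(\Gamma_2)\le\cdots$; then $g(\Gamma_n)\to\infty$. By sparseness we may pick an increasing sequence $a_1<a_2<\cdots$ with $a_i\to\infty$ and exponents $K_i\to\infty$ such that $a_{i+1}>a_iK_i$ and $[a_i,a_iK_i]\cap L=\emptyset$. Set $B_i:=\{n: a_{i-1}K_{i-1}\le g(\Gamma_n)<a_i\}$ with $a_0K_0:=0$. Each $B_i$ is finite, the $B_i$ partition $\N$ (no girth falls into any interval $[a_i,a_iK_i]$ and $a_i\to\infty$), and $\max_{n\in B_i}g(\Gamma_n)=o(\min_{n\in B_{i+1}}g(\Gamma_n))$ since this ratio is at most $a_i/(a_iK_i)=1/K_i$. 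Let $H_i:=G(\bigsqcup_{n\in B_1\cup\cdots\cup B_i}\Gamma_n)$, a finitely presented $Gr'(\tfrac16)$ group, so that $G(\Gamma)=\varinjlim_i H_i$ along the epimorphisms $\alpha_i\colon H_i\to H_{i+1}$ induced by $\mathrm{id}_S$. By (iv) and $\Delta(\Gamma_n)=O(g(\Gamma_n))$, $H_i$ is $\delta_i$-hyperbolic with $\delta_i\le C'\max_{n\in B_i}g(\Gamma_n)$. On the other hand, a shortest word that is nontrivial in $H_i$ but trivial in $H_{i+1}$ bounds (after applying Lemma~\ref{lem:graphical_simple} and Corollary~\ref{cor:graphical_basic}) a $(3,7)$-diagram over $\bigsqcup_{n\in B_1\cup\cdots\cup B_{i+1}}\Gamma_n$ containing a face lifting to some $\Gamma_n$ with $n\in B_{i+1}$; passing to an inclusion-minimal subdiagram containing all such faces as in Lemma~\ref{lem:c7_injectivity_infinity} and applying (i) shows such a word has length at least $\tfrac12\min_{n\in B_{i+1}}g(\Gamma_n)$. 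Hence $r_S(\alpha_i)\ge\tfrac12\min_{n\in B_{i+1}}g(\Gamma_n)$, so $\delta_i=o(r_S(\alpha_i))$, and $G(\Gamma)$ is lacunary hyperbolic.

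\emph{If $L$ is not sparse, then $G(\Gamma)$ is not lacunary hyperbolic.} Fix $K$ with $[a,aK]\cap L\neq\emptyset$ for all $a>0$. Let $(d_n)_{n\in\N}$ be any scaling sequence and $\omega$ any non-principal ultrafilter; it suffices to show $\Con^\omega(G(\Gamma),(d_n))$ is not an $\R$-tree, as lacunary hyperbolicity requires some asymptotic cone to be one. For each $n$ pick a component $\Gamma_{m(n)}$ with girth $\ell_n:=g(\Gamma_{m(n)})\in[d_n,Kd_n]$ and a simple cycle $\gamma_n$ in it of length $\ell_n$. On a girth cycle the graph distance of two vertices equals their arc distance along the cycle: otherwise a strictly shorter connecting path, concatenated with the shorter arc of $\gamma_n$ and then reduced, would be a nontrivial closed path of length below $\ell_n$, hence would contain a simple cycle shorter than the girth. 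By (iii), $\Gamma_{m(n)}$ embeds into $\Cay(G(\Gamma),S)$ by a $(\lambda,\mu)$-quasi-isometric embedding with $\lambda,\mu$ independent of $n$; translating by a group element we may take the image $\overline{\gamma_n}$ to pass through $1$, so every vertex of $\overline{\gamma_n}$ lies within $\ell_n/2=O(d_n)$ of $1$. Parametrizing $\overline{\gamma_n}$ by fractional arclength over the circle $S^1$ and passing to the $\omega$-limit yields a map $\theta\colon S^1\to\Con^\omega(G(\Gamma),(d_n))$ with
$$\tfrac{1}{\lambda}\,d_{S^1}(z,z')\ \le\ d(\theta(z),\theta(z'))\ \le\ K\,d_{S^1}(z,z')$$
for all $z,z'\in S^1$, where $d_{S^1}$ is the metric under which the whole circle has length $1$; the right inequality follows from $d_S\le\ell_n\,d_{S^1}$ and $\ell_n\le Kd_n$, the left from $d_S\ge\tfrac1\lambda\ell_n\,d_{S^1}-\mu$, $\ell_n\ge d_n$, and $\mu/d_n\to0$. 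Thus $\theta$ is a bi-Lipschitz embedding of a circle into $\Con^\omega(G(\Gamma),(d_n))$; since an $\R$-tree contains no topologically embedded circle, the cone is not an $\R$-tree. As $(d_n)$ and $\omega$ were arbitrary, $G(\Gamma)$ is not lacunary hyperbolic.

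The step I expect to be the real obstacle is the transfer of the quantitative content of \cite{Oll} to the non-metric graphical setting, specifically fact~(iv): that the hyperbolicity constant of $G(\Gamma')$ for finite $Gr'(\tfrac16)$-labelled $\Gamma'$ is linear in $\max_n\Delta(\Gamma_n)$. This is precisely where the hypothesis $\Delta(\Gamma_n)=O(g(\Gamma_n))$ does its work (and where the reducedness bound $\deg\le2\lvert S\rvert$ is used, to rule out large components of small girth), and it is the one point that genuinely uses the non-metric conditions rather than being a direct repackaging of \cite[Proposition~3.12]{OOS}. A secondary point is making the injectivity-radius estimate in the first implication quantitative — i.e., pushing the qualitative argument of Lemma~\ref{lem:c7_injectivity_infinity} to the bound $r_S(\alpha_i)\ge\tfrac12\min_{n\in B_{i+1}}g(\Gamma_n)$ — via a Greendlinger argument on the minimal subdiagram whose boundary faces all lift to the newly added components.
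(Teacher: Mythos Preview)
Your overall architecture matches the paper's proof closely: split into the two implications, use the direct-limit characterization for ``sparse $\Rightarrow$ lacunary hyperbolic'', and embed girth cycles to find a circle in every asymptotic cone for the converse. Two points deserve comment.

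\textbf{The injectivity-radius estimate.} Your argument has a gap as written. Applying Greendlinger (your fact~(i)) to the inclusion-minimal subdiagram $D'$ produces a face $f$ with an arc $p\subset\partial f\cap\partial D'$ of length $>\tfrac12|\partial f|$; but $\partial D'$ is not $\partial D$. The arc $p$ may run partly (or entirely) along interior arcs of $D$ shared with faces from $B_1\cup\cdots\cup B_i$, so it does not directly bound $|w|=|\partial D|$. One can try to repair this by observing that each such interior sub-arc is an essential piece of length $<\tfrac16|\partial f|$, but you then need control on how many of them occur inside $p$, which is not immediate. The paper sidesteps this entirely by invoking a different fact of Ollivier (stated here as Lemma~\ref{lem:metric_injectivity}): in a minimal diagram over a $Gr'(\tfrac16)$-labelled graph, \emph{every} face $f$ satisfies $|\partial f|\le|\partial D|$. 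Since a minimal diagram for $w$ over $\Gamma^{k+1}$ must contain some face from the newly added components, this yields $|w|\ge\min_{n\in B_{i+1}}g(\Gamma_n)$ in one line, with no subdiagram manoeuvre and no factor $\tfrac12$.

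\textbf{Your fact (iv) and fact (iii).} You flag the linear bound on the hyperbolicity constant as the ``real obstacle'', but the paper dispatches it as Lemma~\ref{lem:metric_delta}: using Strebel's classification of minimal $C'(\tfrac16)$-type diagrams over geodesic triangles, a finite $Gr'(\tfrac16)$-labelled graph with maximal component diameter $\Delta$ gives a $2\Delta$-hyperbolic group. The hypothesis $\Delta(\Gamma_n)=O(g(\Gamma_n))$ then converts this into a bound in terms of girths. For the converse direction you use only a uniform quasi-isometric embedding of the components; the paper uses the stronger isometric embedding (Theorem~\ref{thm:metric_embedding}, again from \cite{Oll}), which makes the limit cycle in the asymptotic cone literally a round circle rather than a bi-Lipschitz one. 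Your version is correct, just slightly less sharp; and your observation that a girth cycle is convex in its component is exactly what is needed (and implicitly used) in either version.
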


We first give a precise estimate for the hyperbolicity constant of a group given by a finite $Gr'(\frac{1}{6})$-labelled graph. We observe as in Section \ref{section:preliminaries}:

\begin{lem}\label{lem:metric_diagrams} Let $\Gamma$ be a $Gr'(\frac{1}{6})$-labelled graph. Let $w\in M(S)$ satisfying $w=1$ in $G(\Gamma)$ and let $D$ be a minimal diagram over $\Gamma$ for $w$. Let $f$ be a face of $D$. Then $f$ is simply connected. Any interior arc of $\partial f$ has length less than $\frac{1}{6}|\partial f|$.
\end{lem}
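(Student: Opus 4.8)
The plan is to mimic the proof of Lemma \ref{lem:graphical_basic} and Corollary \ref{cor:graphical_basic}, now keeping track of the metric bound rather than just the combinatorial $(3,n)$-structure. First I would take $D$ to be a minimal diagram for $w$ over $\Gamma$ (minimal area, then minimal number of edges). By Corollary \ref{cor:graphical_basic}, no interior edge of $D$ originates from $\Gamma$, and in particular there is no spur whose endpoint lies in the interior of $D$; hence for every face $f$ of $D$ the boundary path $\partial f$ embeds in $D$ and $f$ is simply connected. This already settles the first assertion of the lemma and gives meaning to the phrase ``interior arc of $\partial f$''.

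Next I would prove the metric estimate. Let $f$ be a face of $D$ and let $p$ be a maximal interior arc of $\partial f$, so that $p$ lies in the intersection of $f$ with some other face (or with another part of $\partial f$, but that is excluded since $\partial f$ embeds, so $p=f\cap f'$ for a face $f'\neq f$). Since no interior edge of $D$ originates from $\Gamma$, the arc $p$ does not originate from $\Gamma$, hence $p$ is an essential piece on $\Gamma$. The lift of $\partial f$ to $\Gamma$ (well-defined up to automorphism because the $Gr'(\frac16)$ condition implies $Gr(2)$, hence $C(2)$ after the usual translation) is a simple cycle $\gamma$ in $\Gamma$ with $|\gamma|=|\partial f|$: one may assume every face bears the label of a simple cycle by Lemma \ref{lem:graphical_simple} (which applies since $Gr'(\frac16)$ implies $Gr(6)$), and the $Gr(2)$-reducedness prevents the lifted cycle from being non-simple in a way that would shorten it. Then $p$ is an essential piece that is a subpath of the simple closed path $\gamma$, so the $Gr'(\frac16)$ condition gives directly $|p|<\frac16|\gamma|=\frac16|\partial f|$.

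The main obstacle I anticipate is the bookkeeping around which diagram to use and making the identification $|\partial f|=|\gamma|$ with $\gamma$ \emph{simple} rigorous: a priori $\partial f$ might lift to a non-simple closed path, in which case applying the $Gr'(\frac16)$ condition (which is stated for simple closed paths) is not immediate. I would handle this by invoking Lemma \ref{lem:graphical_simple} to arrange that every face bears the label of a \emph{simple} cycle of $\Gamma$ — this requires replacing ``minimal diagram for $w$'' by the diagram produced in that lemma, but that diagram is still a $(3,6)$-diagram with all faces simply connected and no interior edge originating from $\Gamma$, which is all we need. Once every $\partial f$ genuinely lifts to a simple cycle $\gamma$ with $|\gamma|=|\partial f|$, the estimate is an immediate application of the $Gr'(\frac16)$ definition to the essential piece $p\subseteq\gamma$, and the ``simply connected'' claim follows from the absence of interior spurs exactly as in the proof of Lemma \ref{lem:graphical_basic}. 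A brief remark that all of this is the $Gr'$-analogue of the already-proven $C'$/$C$ statements, as announced in Subsection \ref{subsection:p3}, would conclude the argument.
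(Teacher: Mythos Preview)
Your approach is the paper's: the lemma is stated there merely as an observation following from Section~\ref{section:preliminaries} (specifically Corollary~\ref{cor:graphical_basic}), with no further argument, so deducing simple connectedness from that corollary and then reading off the $Gr'(\tfrac16)$ bound on the resulting essential pieces is exactly what is intended.

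One caveat on your simple-cycle fix. Replacing $D$ by the diagram of Lemma~\ref{lem:graphical_simple} proves the conclusion for \emph{some} diagram, but the lemma as stated concerns \emph{the} minimal diagram, and the two minimality notions (minimal area then edges, versus minimal edges then vertices as in the proof of Lemma~\ref{lem:graphical_simple}) need not coincide. If you want the statement for the minimal diagram itself, argue directly: an essential piece $p$ must be a simple path in $\Gamma$ (otherwise it would contain a simple closed subpath that is both an essential piece and a subpath of itself, violating $Gr'(\tfrac16)$), so if the lift $\gamma$ of $\partial f$ is non-simple, split $\gamma$ at a repeated vertex into shorter closed paths $\gamma_1,\gamma_2$, write $p=p_1p_2$ with each $p_i$ a subpath of $\gamma_i$, and induct on $|\gamma|$ to get $|p|<\tfrac16(|\gamma_1|+|\gamma_2|)=\tfrac16|\partial f|$. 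The paper does not address this subtlety at all.
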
 

Note that for a singular disk diagram $D$ over a group presentation $G=\langle S|R\rangle$, the 1-skeleton of $D$ maps into $\Cay(G,S)$. Singular disk diagrams with the property of Lemma \ref{lem:metric_diagrams}  whose boundaries are geodesic triangles in the Cayley graph have been classified in \cite[Section 3.4]{Str}. This classification yields:

\begin{lem}\label{lem:metric_delta} Let $\Gamma$ be a finite $Gr'(\frac{1}{6})$-labelled graph, and let $\Delta$ be the maximum of the diameters of its connected components. Then $G(\Gamma)$ is $2\Delta$-hyperbolic.
\end{lem}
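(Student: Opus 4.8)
The plan is to bound the hyperbolicity constant of $\Cay(G(\Gamma),S)$ by estimating how far a side of a geodesic triangle can be from the union of the other two sides, using a minimal van Kampen diagram whose boundary consists of the three geodesic sides. First I would take a geodesic triangle $(g_1,g_2,g_3)$ in $\Cay(G(\Gamma),S)$, and let $w$ be the boundary word obtained by concatenating the three geodesic sides; since $w=1$ in $G(\Gamma)$, by van Kampen's Lemma there is a minimal singular disk diagram $D$ for $w$ over $\Gamma$. By Lemma \ref{lem:metric_diagrams}, every face $f$ of $D$ is simply connected and every interior arc of $\partial f$ has length less than $\tfrac16|\partial f|$, so $D$ is (after forgetting degree-$2$ vertices) a $(3,7)$-diagram satisfying the metric $C'(\tfrac16)$ small cancellation hypothesis on arc lengths. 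The key input is the classification in \cite[Section 3.4]{Str} of such diagrams whose boundary is a geodesic triangle: it describes the possible combinatorial shapes of $D$ (essentially, the faces line up in a controlled ``ladder'' pattern between the sides).

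Next I would use that classification to show that any vertex on the side $g_3$ lies within distance $\Delta$ (in the diagram metric, hence in the Cayley graph) of $g_1\cup g_2$, where $\Delta$ is the maximal diameter of a connected component of $\Gamma$. The point is that each face $f$ of $D$ lifts to a cycle in $\Gamma$, so the distance between any two vertices on $\partial f$ is at most the diameter of the connected component of $\Gamma$ that $f$ lifts to, hence at most $\Delta$; therefore ``crossing'' one face costs at most $\Delta$. The Strebel classification guarantees that one can pass from $g_3$ to $g_1 \cup g_2$ by crossing at most two faces (the diagram has no deep ``pockets'' attached to a single side, because the sides are geodesic), giving the bound $2\Delta$. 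This shows $\Cay(G(\Gamma),S)$ is $2\Delta$-hyperbolic, as claimed.

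The main obstacle is extracting exactly the right geometric consequence from the Strebel classification: one must check carefully that in each of the listed diagram shapes the ``width'' between the geodesic side $g_3$ and the opposite part of the boundary is at most two faces, and that a face in such a diagram cannot have arbitrarily large diameter image in $\Gamma$ beyond the component diameter — the latter is immediate, but the former requires going through the finitely many cases of Strebel's list and using geodesy of the sides to rule out the configurations that would produce long thin tongues. A secondary point to handle is degenerate cases (a side of length $0$, or $D$ having area $0$), which are trivial, and the reduction from singular to simple diagrams, which follows from minimality exactly as in the proof of Lemma \ref{lem:metric_diagrams}.
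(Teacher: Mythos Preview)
Your proposal is correct and follows essentially the same approach as the paper: the paper simply remarks that the $1$-skeleton of a minimal diagram maps to $\Cay(G(\Gamma),S)$, invokes Strebel's classification \cite[Section 3.4]{Str} of diagrams with the property of Lemma~\ref{lem:metric_diagrams} whose boundaries are geodesic triangles, and states that this classification yields the lemma. Your outline supplies the detail the paper omits, namely that any two vertices on the boundary of a face are at distance at most $\Delta$ in the Cayley graph (since the face lifts to a connected component of $\Gamma$) and that Strebel's shapes force one to cross at most two faces to reach the other sides.
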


We will use the following two facts \cite[Lemma 13 and Theorem 1]{Oll} proven by Ollivier:
\begin{lem}\label{lem:metric_injectivity}
 Let $\Gamma$ be a $Gr'(\frac{1}{6})$-labelled graph. Let $w\in M(S)$ satisfying $w=1$ in $G(\Gamma)$, and let $D$ be a minimal diagram for $w$ over $\Gamma$. Then for any face $f$, we have $|\partial f|\leq |\partial D|$.
\end{lem}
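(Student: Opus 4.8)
The plan is to prove the slightly stronger statement that for \emph{every} reduced disk diagram $D$ over $\Gamma$ --- by which I mean a singular disk diagram over $\Gamma$ with no spur having an interior endpoint and no interior edge originating from $\Gamma$ --- and every face $f$ of $D$ one has $|\partial f|\le|\partial D|$. Since a minimal diagram is reduced in this sense (Corollary \ref{cor:graphical_basic}), and since the structural conclusions of Lemma \ref{lem:metric_diagrams} --- every face is simply connected, and every interior arc $p$ of a face $f$ satisfies $|p|<\tfrac{1}{6}|\partial f|$ because interior arcs are essential pieces --- hold for any reduced diagram (the only role of minimality in establishing them is to produce this situation, cf.\ Lemma \ref{lem:graphical_basic}), this yields the lemma. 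I would argue by induction on $\Area(D)$. The cases $\Area(D)\le 1$ are immediate, so suppose $\Area(D)\ge 2$ and, for contradiction, that some face $f_0$ satisfies $|\partial f_0|>|\partial D|$.

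The main tool is a Greendlinger-type lemma for the $Gr'(\tfrac{1}{6})$ condition: forgetting vertices of degree two turns $D$ into a diagram in which the boundary of each interior face is a concatenation of at least $7$ arcs (each interior arc being shorter than $\tfrac{1}{6}$ of the face boundary), so that the first curvature formula applies and, by the classical Lyndon--Schupp argument adapted to the graphical setting, forces the existence of \emph{at least two} boundary faces $g$ such that $\partial g\cap\partial D$ contains a single consecutive subpath $b_g$ with $|b_g|>\tfrac{1}{2}|\partial g|$; writing $\partial g=b_g c_g$ we then have $|c_g|=|\partial g|-|b_g|<\tfrac{1}{2}|\partial g|<|b_g|$. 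Since at most one such face equals $f_0$, choose one with $g\ne f_0$ and form $D'$ from $D$ by deleting the open $2$-cell $g$ together with the interior of the arc $b_g$ --- equivalently, replace the subpath $b_g$ of $\partial D$ by $c_g$ read backwards --- and then fold away any spur this creates. Then $D'$ is again a reduced disk diagram over $\Gamma$ with $\Area(D')\le\Area(D)-1$ and $|\partial D'|\le|\partial D|-|b_g|+|c_g|<|\partial D|$. The surgery neither touches $f_0$ nor creates any new incidence between the remaining faces, so $f_0$ survives as a face of $D'$, and $|\partial f_0|>|\partial D|>|\partial D'|$; this contradicts the induction hypothesis applied to $D'$, completing the proof. (This reproduces in the present combinatorial framework the content of \cite[Lemma 13]{Oll}.)

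The step I expect to be the main obstacle is establishing the Greendlinger-type statement with the precise conclusion ``at least two'' boundary faces carrying a single consecutive boundary arc longer than half of their own boundary: this is exactly where the threshold $\lambda=\tfrac{1}{6}$ is used, it must be extracted carefully from the first curvature formula rather than from a crude curvature count (one must control \emph{which} boundary faces receive enough positive contribution and how their arcs are distributed), and it has to be verified in the presence of cut vertices of $D$ --- e.g.\ by applying the statement to each $2$-connected block separately so that a suitable $g\ne f_0$ can always be found. The remaining points --- that deleting $g$ and $b_g$, together with the induced spurs, preserves reducedness, leaves $f_0$ intact, and strictly decreases both the area and the boundary length --- are routine once one recalls that removing a boundary face from a diagram cannot create new face--face incidences.
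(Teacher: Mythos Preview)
The paper does not give its own proof of this lemma: it is quoted verbatim as one of the ``two facts \cite[Lemma 13 and Theorem 1]{Oll} proven by Ollivier'' and used as a black box in the proof of Proposition~\ref{prop:1/6_lacunary}. So there is nothing to compare against in the paper itself.

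Your sketch is the standard argument and is essentially what Ollivier does: induct on area, invoke a Greendlinger-type lemma to find a boundary face $g\neq f_0$ whose exterior part is longer than its interior part, excise $g$ to obtain a strictly shorter boundary, and derive a contradiction. You have also correctly isolated the only genuinely delicate point, namely producing \emph{two} boundary faces each with a \emph{single} exterior arc of length $>\tfrac12|\partial g|$; this does not drop out of the crude inequality $\sum(4-i(f))\ge 6$ alone (a face with $i(f)\le 3$ could in principle meet $\partial D$ in several arcs), and one has to argue as in the classical Greendlinger lemma for $C'(\tfrac16)$ (e.g.\ \cite[Theorem V.4.5]{LS}, or the refined shell analysis in \cite{Str}), applied blockwise in the presence of cut vertices. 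Once that is in hand, the remaining surgical claims --- that deleting $g$ together with its exterior arc yields a reduced diagram of smaller area, does not create new interior incidences, and leaves $f_0$ untouched --- are routine. One small wording point: your notion of ``reduced'' is tailored to the $Gr$ setting (no interior edge \emph{essentially} originating from $\Gamma$), and the induction hypothesis should be phrased for that class rather than for ``minimal'' diagrams, exactly as you do; just make sure the excised diagram $D'$ visibly stays in that class, which it does since removing a face cannot create new interior edges.
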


\begin{thm}\label{thm:metric_embedding} Let $\Gamma$ be a $Gr'(\frac{1}{6})$-labelled graph. Then each connected component of $\Gamma$ embeds isometrically into $\Cay(G(\Gamma),S)$.
\end{thm}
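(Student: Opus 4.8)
The plan is to prove that for a connected component $X$ of a $Gr'(\frac{1}{6})$-labelled graph $\Gamma$, the canonical map $\iota:X\to\Cay(G(\Gamma),S)$ is an isometric embedding. Injectivity is already available: by Lemma~\ref{lem:embedding_injective}, since $Gr'(\frac{1}{6})$ implies $Gr(6)$, the map $\iota$ is injective, so it only remains to show that $d_{\Cay}(\iota(x),\iota(y)) = d_X(x,y)$ for all vertices $x,y\in X$. One inequality is automatic: any path in $X$ from $x$ to $y$ maps to a path of the same length in the Cayley graph, so $d_{\Cay}(\iota(x),\iota(y))\le d_X(x,y)$. The content is the reverse inequality, i.e. that no shortcut exists in $G(\Gamma)$.

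First I would set up the contradiction: suppose $d_{\Cay}(\iota(x),\iota(y)) = m < d_X(x,y)$. Let $q$ be a path in $X$ from $x$ to $y$ realizing the graph distance, so $|q| = d_X(x,y) > m$, and let $u$ be a geodesic word in $F(S)$ of length $m$ representing the element $\iota(x)^{-1}\iota(y)$. Then the word $w := \ell(q)\,u^{-1}$ is trivial in $G(\Gamma)$. Let $D$ be a minimal diagram for $w$ over $\Gamma$. The boundary $\partial D$ splits as $\partial D = q'(u')^{-1}$ where $q'$ carries the label $\ell(q)$ and $u'$ carries $u$. As in the proof of Lemma~\ref{lem:embedding_injective}, every arc of $D$ bounding a face is an essential piece: an arc of $f\cap\partial D$ that lies on $q'$ cannot be essentially equal to its lift via $q$ (else one contracts $f$ and shortens the diagram, contradicting minimality of $|q|$ among paths — or minimality of the area), and interior arcs are essential pieces by Corollary~\ref{cor:graphical_basic}/Lemma~\ref{lem:metric_diagrams}. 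The key input is Lemma~\ref{lem:metric_injectivity}: in a minimal diagram every face satisfies $|\partial f|\le|\partial D| = |q| + m$.

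The main work is a counting argument on $D$. If $D$ has no faces, then $\ell(q) = u$ freely, so $m \ge |q|$ after reduction — but $q$ is reduced and simple (since all its edges, being on a geodesic in $X$, carry an essential-piece subpath; one must argue $q$ is reduced, which follows from reducedness of the labelling) giving a contradiction with $m<|q|$. So $D$ has at least one face. Now I would apply the combinatorial structure of $Gr'(\frac{1}{6})$-diagrams whose boundary splits into two geodesic-like subpaths, following the classification in \cite[Section 3.4]{Str} that underlies Lemma~\ref{lem:metric_delta}: in such a diagram some boundary face $f$ meets $\partial D$ in an arc that is ``most of'' $\partial f$. Concretely, $u'$ is geodesic in the Cayley graph, hence a face meeting $u'$ in a long arc would force $|\partial f|$ to be nearly twice that arc, and combined with the $Gr'(\frac{1}{6})$ bound on the other (interior, and $q'$-side) arcs of $\partial f$ one derives that the portion of $\partial f$ along $u'$ has length $<\frac12|\partial f|$; meanwhile the standard $Gr'(\frac16)$ curvature count (each interior face has $\ge 7$ essential-piece arcs; boundary faces must absorb the deficit along $\partial D$) forces the existence of a boundary face whose $\partial D$-arc is $>\frac12|\partial f|$. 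Pinning the contradiction to the split $\partial D = q'(u')^{-1}$ then shows $|q| \le m$.

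The hard part will be making the last step precise: I expect the cleanest route is not to re-derive everything but to invoke the $\mathrm{Str}$ classification of minimal $Gr'(\frac16)$-diagrams over a pair of geodesics — noting that although $q'$ is not a priori geodesic in the Cayley graph, $u'$ is, and that is enough to apply the one-sided version of the classification to conclude that $D$ can have no faces at all, whence $\ell(q)=u$ in $F(S)$ and $|q|=m$, the desired contradiction. So the real obstacle is citing/adapting the Strebel-style diagram classification correctly rather than any new combinatorics; once that is in place the theorem is immediate.
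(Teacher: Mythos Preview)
The paper does not prove this statement itself: it is quoted as one of ``the following two facts \cite[Lemma 13 and Theorem 1]{Oll} proven by Ollivier'' and used as a black box. Your outline is broadly Ollivier's strategy, but two points prevent it from being a proof as written.

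The first is minor but should be fixed: your choice of $q$ is inconsistent. You take $q$ to be a geodesic in $X$ and then claim each $q'$-side arc of a face $f$ is an essential piece because ``else one contracts $f$ \ldots contradicting minimality of $|q|$''. But contracting $f$ replaces that arc by the complementary arc of $\partial f$, yielding a \emph{different} path $q''$ in $X$ and a diagram of smaller area, with possibly $|q''|>|q|$; geodesicity of $q$ is not contradicted. The correct setup (and Ollivier's) is to minimise, over \emph{all} paths $q$ from $x$ to $y$ in $X$, the area of a minimal diagram for $\ell(q)u^{-1}$, exactly as in the proof of Lemma~\ref{lem:embedding_injective}. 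Then the absorption step genuinely forces every $q'$-arc to be an essential piece, and once $D$ is shown to have no faces one obtains a path in $X$ of length $|u|=m$, whence $d_X(x,y)\le m$.

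The second is the real gap: there is no ``one-sided version'' of Strebel's classification to cite. Section~3.4 of \cite{Str} classifies minimal $C'(\tfrac16)$-diagrams whose boundary is a geodesic bigon or triangle in the Cayley graph---\emph{all} sides geodesic---and that is precisely how the paper uses it in Lemma~\ref{lem:metric_delta}. Here only $u'$ is geodesic; $q'$ is not (that is exactly what you are trying to prove), so the classification does not apply and you cannot conclude ``$D$ can have no faces at all'' from it. What actually rules out faces is a direct Greendlinger-type count: with the setup above, every interior arc and every $q'$-arc of a face $f$ is an essential piece of length $<\tfrac16|\partial f|$, while every $u'$-arc $a$ of $f$ satisfies $|a|\le\tfrac12|\partial f|$ because $u$ is geodesic and the complement of $a$ in $\partial f$ represents the same element of $G(\Gamma)$. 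Feeding these length bounds into the curvature inequality for the resulting $(3,7)$-diagram produces a contradiction whenever $D$ has a face. That computation is elementary but has to be carried out; it is the substance of Ollivier's proof, not a corollary of \cite{Str}.
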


\begin{proof}[Proof of the proposition] Note that given the previous lemmas and theorem, a proof can be deduced from
\cite[Proof of Proposition 3.12]{OOS}. In fact, the first part of our proof uses the same arguments.

\vspace{8pt}

By identifying isomorphic connected components of $\Gamma$, we can assume that for $n\neq n'$, $\Gamma_n$ and $\Gamma_{n'}$ are non-isomorphic. Assume that $L$ is sparse. Then there exists a sequence $(\alpha_n)_{n\in\N}$ of positive real numbers such that for all $n\in \N$, we have $L\cap [\alpha_n,n \alpha_n]=\emptyset$. We may assume $n\alpha _n<\alpha_{n+1}$ for all $n$. Fix $C\in\R^+$ such that for all $n: \Delta(\Gamma_n)\leq C g(\Gamma_n)$. Let 
$$\Gamma^k:=\sqcup_{g(\Gamma_n)<\alpha_k}\Gamma_n.$$
Then for all $k$, the graph $\Gamma^k$ is a finite and $Gr'(\frac{1}{6})$-labelled. For any connected component of $\Gamma^k$, the girth is bounded from above by $\alpha_k$, and hence the diameter is bounded by $C\alpha_k$. Lemma \ref{lem:metric_delta} implies that $G_k:=G(\Gamma^k)$ is $2C\alpha_k$-hyperbolic. Set $\delta_k:=2C\alpha_k$.

The injectivity radius $r_S(\pi_k)$ of the map $G_k\to G_{k+1}$ induced by the identity on $S$ is the length of the shortest word $w$ in $S$ that is trivial in $G_{k+1}$ but not trivial in $G_k$. Hence a minimal diagram $D$ for $w$ over $\Gamma^{k+1}$ has a face $f$ from $\Gamma^{k+1}\setminus \Gamma^k$. Such a face satisfies $|\partial f|\geq k\alpha_k$ by construction. Hence Lemma \ref{lem:metric_injectivity} implies $r_S(\pi_k)=|w|\geq k\alpha_k$. Therefore $\frac{\delta_k}{r_S(\pi_k)}\to 0$ as $k\to \infty$, and $G(\Gamma)$ is lacunary hyperbolic.

\vspace{8pt}

We now prove the converse. Assume that the set of girths $L$ is not sparse. Choose any non-decreasing scaling sequence $(d_n)_{n\in\N}$ tending to infinity and any non-principal ultrafilter $\omega$ on $\N$. We show that $Y:=\Con^\omega(G(\Gamma),d_n)$ is not an $\R$-tree. Theorem \ref{thm:metric_embedding} implies that from each graph $\Gamma_n$, there is a cycle $p_n$ of length $g(\Gamma_n)$ isometrically embedded into $X:=\Cay(G(\Gamma),S)$. Since $L$ is not sparse, there exists $K>0$ such that for all $a >0$ we have $[a,aK]\cap L\neq\emptyset$. Hence for all $n\in\N$ there is $k(n)\in \N$ such that the inequality
$$d_n\leq g(\Gamma_{k(n)})\leq K d_n $$
holds, or in other words $1\leq |p_{k(n)}|/d_n\leq K$.  Since the interval $[1,K]$ is bounded, the sequence $(|p_{k(n)}|/d_n)_{n\in\N}$ converges to some $R\in[1,K]$ with respect to the ultrafilter $\omega$. We may view each cycle $p_{k(n)}$ as a continuous map $p_{k(n)}:\R/\Z\to X$, and we may assume that for all $t$ and for all $\epsilon\in(-\frac{1}{2},\frac{1}{2}]$ we have $d(p_{k(n)}(t),p_{k(n)}(t+\epsilon))=|\epsilon| |p_{k(n)}|$. 

Consider the cycle $\gamma:\R/\Z\to Y, t\mapsto [(p_{k(n)}(t))_{n\in \N}]$. Let $t\neq t'\in \R/\Z$ and $\epsilon\in(-\frac{1}{2},\frac{1}{2}]$ such that $t'=t+\epsilon$. Then for any $n$,
$$d(\gamma(t),\gamma(t'))=\lim_n^\omega \frac{d(p_{k(n)}(t),p_{k(n)}(t+\epsilon))}{d_n}=\lim_n^\omega|\epsilon|\frac{|p_{k(n)}|}{ d_n}=|\epsilon|R>0.$$ Hence $\gamma$ is injective, and thus $Y$ is not an $\R$-tree.
\end{proof}

\end{document}